\documentclass[twoside,11pt]{article}

\usepackage{blindtext}

\usepackage{jmlr2e}


\usepackage{amsmath}
\usepackage{graphicx}
\usepackage{enumerate}
\usepackage{url} 
\usepackage[ruled,linesnumbered]{algorithm2e}
\usepackage{algpseudocode}
\usepackage{hyperref}
\hypersetup{
	colorlinks=true,
	linkcolor=blue,
	citecolor=blue,
}




\usepackage{caption}
\usepackage{subcaption}


\def\E{{\mathbf E}}
\def\P{{\mathbf P}}
\def\Pa{{\mathcal P}}

\def\D{{\mathcal D}}

\def\S{{\cal S}}

\def\I{{\mathbb{I}}}
\def\R{{\mathbb{R}}}

\def\C{{\small\mathcal{C}}}

\newtheorem{assump}{Assumption}

\usepackage{lastpage}


\ShortHeadings{Properties of Generalized Mondrian
Forests}{Zhan et al.}
\firstpageno{1}

\begin{document}

\def\spacingset#1{\renewcommand{\baselinestretch}%
{#1}\small\normalsize} \spacingset{1}



\title{Non-asymptotic Properties of Generalized Mondrian Forests in Statistical Learning}
\author{\name Haoran Zhan \email haoran.zhan@u.nus.edu \\
       \addr Department of Statistics and Data Science, \\National University of Singapore, 117546, Singapore\\
       \AND
       \name Jingli Wang \email jlwang@nankai.edu.cn \\
       \addr School of Statistics and Data Science, KLMDASR, LEBPS, and LPMC, \\Nankai University,
       Tianjin, 300071, China\\
       \AND
       \name Yingcun Xia \email yingcun.xia@nus.edu.sg\\
       \addr Department of Statistics and Data Science, \\National University of Singapore, 117546, Singapore\\
       }

\editor{My editor}

\maketitle

\begin{abstract}

Random Forests have been extensively used in regression and classification, inspiring the development of various forest-based methods. Among these, Mondrian Forests, derived from the Mondrian process, mark a significant advancement. Expanding on Mondrian Forests, this paper presents a general framework for statistical learning, encompassing a range of common learning tasks such as least squares regression, \(\ell_1\) regression, quantile regression, and classification. Under mild assumptions on the loss functions, we provide upper bounds on the regret/risk functions for the estimators and demonstrate their statistical consistency.

\end{abstract}

\begin{keywords}
ensemble learning,  machine learning, random forests,  regret function, statistical learning
\end{keywords} 

\section{Introduction}
\label{sec:intro}

Random Forest (RF) \citep{breiman2001random} is a popular ensemble learning technique in machine learning. It operates by constructing multiple decision trees and averaging their predictions to improve accuracy and robustness. Many empirical studies have demonstrated its powerful performance across various data domains (for example, see \cite{liaw2002classification}). The effectiveness of RF is attributed to its data-dependent splitting rule, called CART. Briefly, CART is a greedy algorithm that identifies the best splitting variable and value by maximizing the reduction of training error between two layers. However, this data-dependent splitting scheme complicates the theoretical analysis of RF.

To the best of our knowledge, two papers have made significant contributions to the theoretical analysis of RF. \cite{scornet2015consistency} was the first to establish the statistical consistency of RF when the true regression function follows an additive model. \cite{klusowski2021universal} later demonstrated the consistency of RF under weaker assumptions on the distribution of predictors. However, both studies rely on the technical condition that the conditional mean has an additive structure. 

To gain a deeper understanding of the random forest, people consider modified and stylized versions of RF. One such method is Purely Random Forests (PRF, for example \cite{arlot2014analysis}, \cite{biau2012analysis} and \cite{biau2008consistency}), where individual trees are grown independently of the sample, making them well suited for theoretical analysis.
In this paper, our interest is Mondrian Forest, which is one of PRFs applying Mondrian process in its partitioning. This kind of forest was first introduced by \cite{lakshminarayanan2014mondrian} and has competitive online performance in classification problems compared to other state-of-the-art methods. Inspired by its nice online property, \cite{mourtada2021amf} also studied the online regression theory and classification using Mondrian forest. Besides its impressive online performance, this data-independent partitioning method is also important for offline regression because it achieves a higher consistency rate than other partitioning ways, such as the midpoint-cut strategy; see \cite{mourtada2020minimax}.
In fact, \cite{mourtada2020minimax} showed that the statistical consistency for Mondrian forests is minimax optimal for the class of H\"{o}lder continuous functions. Then, \cite{cattaneo2023inference}, following the approach in  \cite{mourtada2020minimax}, derived the asymptotic normal distribution of the Mondrian forest for the offline regression problem. Recently, \cite{baptista2024trim} proposed a novel dimension reduction method using Mondrian forests.  Therefore, it is evident that Mondrian forests have attracted increasing research attention due to their advantageous theoretical properties compared to other forest-based methods.

In this paper, we argue that Mondrian forests, beyond their application in classical regression and classification, can be extended to address a broader spectrum of statistical and machine learning problems, including generalized regression, density estimation, and quantile regression. Our main contributions are as follows:

\begin{itemize}
    \item First, we propose a general framework based on Mondrian forests that can be applied to various learning problems.

    \item Second, we establish an upper bound for the regret (or risk) function of the proposed forest estimator. The theoretical results derived from this analysis are applicable to numerous learning scenarios, with several examples provided in Section \ref{sec:Examples}.
\end{itemize}


Our training approach adopts a global perspective, contrasting with the local perspective utilized in the generalization of Random Forests by \cite{athey2019generalized}. Specifically, after performing a single optimization on the entire dataset, our method can estimate the objective function \( m(x) , \forall x \in [0,1]^d \), while \cite{athey2019generalized} focus on estimating \( m(x_0) \) at a specific point \( x_0 \). This global approach significantly reduces computational time, especially in high-dimensional settings where \( d \) is large.

Moreover, our globalized framework can be easily applied to statistical problems involving a penalization function \( Pen(m) \), which depends on \( m(x) \) in the entire domain \( [0,1]^d \). In Section \ref{densityestimation}, we illustrate the application of our method to nonparametric density estimation, a problem that incorporates such penalization. In contrast, since \cite{athey2019generalized} relies on pointwise estimation, their method struggles to ensure that the resulting estimator satisfies shape constraints, such as those required for a valid probability density function, thus excluding these cases from their scope. 

\section{Background and Preliminaries}
\subsection{Task in Statistical Learning}
Let $(X,Y)\in [0,1]^d\times \R$ be the random vector, where we have normalized the range of $X$. In statistical learning, the goal is to find a policy $h$ supervised by $Y$, which is defined as a function $h: [0,1]^d\to \R$. Usually, a loss function $\ell (h(x),y): \R\times\R\to [0,\infty)$ is used to measure the difference or loss between the decision $h(x)$ and the goal $y$. Taking expectation w.r.t. $X,Y$, the risk function 
\begin{equation}\label{Risk}
    R(h):=\E(\ell (h(X),Y)
\end{equation}
denotes the averaged loss by using the policy $h$. Naturally, people have reasons to select the best policy $h^*$ by minimizing the averaged loss over some function class $\mathcal{H}_1$, namely,
$$
h^*=\arg\min_{h\in\mathcal{H}_1 }  R(h).
$$
Therefore, the policy $h^*$ has the minimal risk  and is the best in theory. In practice, the distribution of $(X,Y)$ is unknown and \eqref{Risk} is not able to be used for the calculation of $R(h)$. Thus, such a best $h^*$ cannot be obtained in a direct way. Usually, we can use i.i.d. data $\D_n=\{(X_i,Y_i)\}_{i=1}^n$ to approximate $R(h)$ by the law of large numbers. Thus, the empirical risk function can be approximated  by
\begin{equation*}
    \hat{R}(h):=\frac{1}{n}\sum_{i=1}^n \ell (h(X_i),Y_i).
\end{equation*}
Traditionally, people always can find an estimator/policy $\hat{h}_n:[0,1]^d\to \R$ by minimizing $\hat{R}(h)$ over a known function class; see spline regression in \cite{gyorfi2002distribution}, wavelet regression in \cite{gyorfi2002distribution} and regression by deep neural networks in \cite{schmidt2020nonparametric} and \cite{kohler2021rate}. 

Instead of globally minimizing \( \hat{R}(h) \), tree-based greedy algorithms adopt a ``local" minimization approach, which is widely used to construct the empirical estimator \( \hat{h}_n \). These algorithms have demonstrated the advantages of tree-based estimators over many traditional statistical learning methods. Therefore, in this paper, we focus on bounding the regret function:
\[
\varepsilon(\hat{h}_n) := R(\hat{h}_n) - R(h^*),
\]
where \( \hat{h}_n \) represents an ensemble estimator built using Mondrian Forests.

\subsection{Mondrian partitions}
Mondrian partitions are a specific type of random tree partition, where the partitioning of \( [0,1]^d \) is independent of the data points. This scheme is entirely governed by a stochastic process called the Mondrian process, denoted as \( MP([0,1]^d) \). The Mondrian process \( MP([0,1]^d) \), introduced by \cite{roy2008mondrian} and \cite{roy2011computability}, is a probability distribution over infinite tree partitions of \( [0,1]^d \). For a detailed definition, we refer to Definition 3 in \cite{mourtada2020minimax}.

In this paper, we consider the Mondrian partitions with stopping time $\lambda$, denoted by $MP(\lambda, [0,1]^d)$ (see Section 1.3 in \cite{mourtada2020minimax}).  Its construction consists of two steps. First, we construct partitions according to $MP([0,1]^d)$ by iteratively splitting cells at random times, which depends on the linear dimension of each cell. The probability of splitting along each side is proportional to the side length of the cell, and the splitting position is chosen uniformly. Second, we cut the nodes whose birth time is after the tuning parameter $\lambda>0$. In fact, each tree node created in the first step was given a specific birth time. As the tree grows, so does the birth time. Therefore, the second step can be seen as a pruning process, which helps us to choose the best tree model for a learning problem.

To clearly present the Mondrian partition algorithm, some notations are introduced. 
Let $\C=\prod_{j=1}^d{\C^j}\subseteq [0,1]^d$ be a cell with closed intervals $\C^j=[a_j,b_j]$. Denote $|\C^j|=b_j-a_j$ and $|\C|=\sum_{j=1}^{d}{|\C^j|}$. Let $Exp(|\C|)$ be an exponential distribution with expectation $|\C|>0$. 
Algorithm \ref{alg:1} shows how our Mondrian partition operates. This algorithm is a recursive process, where the root node \( [0,1]^d \) and stopping time  $\lambda$ are used as the initial inputs.



\begin{algorithm}
\caption{Mondrian Partition of $[0,1]^d$: Generate a Mondrian partition of $[0,1]^d$, starting from time $0$ and until time $\lambda$.
}\label{alg:1}
\KwIn{Stopping time $\lambda$.}
 Run the iterative function $\text{Mondrian}\text{Partition}([0,1]^d, 0,\lambda)$.

 \# This function is defined in Algorithm \ref{alg:2}.
\end{algorithm}

\begin{algorithm}
\caption{MondrianPartition $(\C,\tau,\lambda)$: Generate a Mondrian partition of cell $\C$, starting from time $\tau$ and until time $\lambda$.
}\label{alg:2}
Sample a random variable $E_\C \sim Exp(|\C|)$

 \eIf{$\tau+E_\C\le\lambda$}
 { Randomly choose a split dimension $J\in\{1, \ldots , d\}$ with $\P(J=j)=(b_j-a_j)/|\C|$\;
  Randomly choose a split threshold $S_J$ in $[a_J,b_J]$\;
   Split $\C$ along the split $(J,S_J)$:  let $\C_0 = \{x\in \C : x_J\le S_J \}$ and $\C_1 = \C /\ 
\C_0$\;
   \textbf{return} $ \text{Mondrian}\text{Partition}(\C_0,\tau+E_\C,\lambda )\cup  \text{Mondrian}\text{Partition}(\C_1,\tau+E_\C,\lambda )$.
 }
 {
      \textbf{return} $\{\C\}$ (i.e., do not split $\C$)
 }
\end{algorithm}

\begin{figure}
    \begin{subfigure}[b]{0.45\textwidth}
    \centering
  \includegraphics[width=0.8\linewidth]{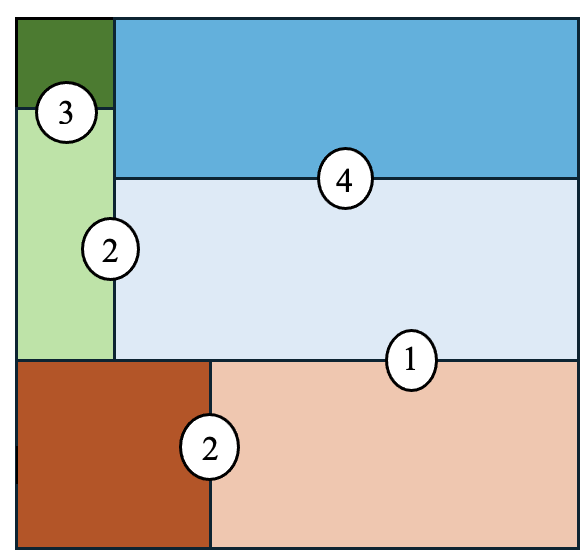}
  \end{subfigure}
       \hfill
   \begin{subfigure}[b]{0.45\textwidth}
    \centering
  \includegraphics[width=0.8\textwidth]{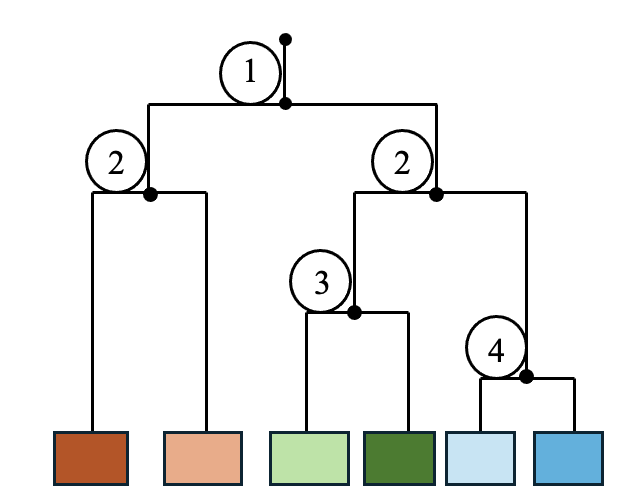}
  \end{subfigure}
  \caption{ An example of a Mondrian partition (left) with the corresponding tree structure (right). This shows how the tree grows over time.
 There are four partitioning times in this demo, $1,2,3,4$, which are marked by bullets ($\bullet$) and the stopping time is $\lambda=4$.}
\label{f1}
\end{figure}

\section{Methodology}
\label{sec:meth}
Based on the Mondrian Partition, we introduce our Mondrian Forests in this section. 
Let $MP_b([0,1]^d),b=1,\ldots,B$, be independent Mondrian processes. When we prune each tree at time $\lambda>0$, independent partitions $MP_b(\lambda, [0,1]^d),b=1,\ldots,B$ are obtained, where all cuts after time $\lambda$ are ignored. In this case, we can write $MP_b(\lambda,[0,1]^d)=\{\C_{b,\lambda,j}\}_{j=1}^{K_b(\lambda)}$ satisfying
$$
[0,1]^d=\bigcup_{j=1}^{K_b(\lambda)} \C_{b,\lambda,j}\ \ \text{and}\ \ \C_{b,\lambda,j_1}\cap \C_{b,\lambda,j_2}= \varnothing,\ \forall j_1\neq j_2,
$$
where $\C_{b,\lambda,j}\subseteq [0,1]^d$ denotes a cell in the partition $MP_b(\lambda,[0,1]^d)$. For each cell $\C_{b,\lambda,j}$, a constant policy $\hat{c}_{b,\lambda,j}\in\R$ is used as the predictor of $h(x)$, where
\begin{equation}\label{dasfASFD}     \hat{c}_{b,\lambda,j}=\arg\min_{z\in [-\beta_n,\beta_n]}{ \sum_{i:X_i\in \C_{b,\lambda,j}}}{ \ell(z,Y_i)}
\end{equation}
and $\beta_n>0$ is a threshold. For any fixed $y\in\R$, $\ell(\cdot,y)$ is usually a continuous function w.r.t. the first variable in machine learning. Therefore, the optimization \eqref{dasfASFD} over $[-\beta_n,\beta_n]$ guarantees the existence of  $ \hat{c}_{b,\lambda,j}$ in general and we allow $\beta_n\to\infty$ as $n\to\infty$ in theory. Then, for each $1\le b\le B$, we can get an estimator of $h(x)$:
$$
\hat{h}_{b,n}(x):=\sum_{j=1}^{K_b(\lambda)}{\hat{c}_{b,\lambda,j}\cdot \I(x\in\C_{b,\lambda,j})},\ x\in [0,1]^d,
$$
where $\I(\cdot)$ denotes the indicator function. By applying the ensemble technique, the final estimator is given by
\begin{equation}\label{final estimator}
    \hat{h}_{n}(x):=\frac{1}{B}\sum_{b=1}^B {\hat{h}_{b,n}(x)}, \ x\in [0,1]^d.
\end{equation}
If the cell $C_{b,\lambda,j}$ does not contain any data point, we just use $0$ as the optimizer in the corresponding region.

Let us clarify the relationship between \eqref{final estimator} and the traditional RF.  Recall that the traditional RF (see \cite{mourtada2020minimax} and \cite{breiman2001random}) is used to estimate the conditional mean $\E(Y|X=x)$ by using the sample mean in each leaf. In this case,  the $\ell_2$ loss function $\ell(v,y)=(v-y)^2$ is  applied.  If $|Y|\le \beta_n$,  it can be checked that 
$$
  \hat{c}_{b,\lambda,j}= \frac{1}{Card(\{i:X_i\in \C_{b,\lambda,j}\})}\sum_{i:X_i\in \C_{b,\lambda,j}} Y_i,
$$
where $Card(\cdot)$  denotes the cardinality of a set.  Therefore,  in this case, our forest estimator  \eqref{final estimator}  coincides with the traditional RF  exactly.  In conclusion,  $\hat{h}_{n}(x)$ is an extension of traditional RF in \cite{breiman2001random} since the loss function $\ell(v,y)$ can be chosen flexibly in different learning problems. 

From the above learning process, we know that there are two tuning parameters in the construction of $\hat{h}_n$, namely $\lambda$ and $B$.  We give some remarks about them. The stopping time, $\lambda$, controls the complexity of the Mondrian forest. Generally speaking, the cardinality of a tree partition increases with the value of $\lambda$. Thus, a large value of $\lambda$ is beneficial in reducing the bias of the forest estimator. In contrast, a small $\lambda$ is instrumental in controlling the generalization error of the final estimator \eqref{final estimator}. So selecting $\lambda$ is crucial in balancing complexity and stability. To ensure the consistency of $\hat{h}_n$, we suppose that $\lambda$ is dependent on the sample size $n$, denoting it as $\lambda_n$ in the following analysis. The second parameter, $B$, denotes the number of Mondrian trees, which can be determined as the selection for RF. There are many studies on its selection for RF; see, for example, \cite{Zhang2009}. In practice, most practitioners take $ B= 100 \text{\ or\ } 500$ in their computations.

\section{Main results}
\label{sec: regret function of Mondrian forests}

In this section, we study the upper bound of the regret function of  \eqref{final estimator}, which is constructed by Mondrian processes.  Denote $\S\subseteq\R$ as the support of $Y$ that satisfies $\P(Y\in\S)=1$. First, we need some mild restrictions on the loss function $\ell(v,y)$.

\begin{assump}\label{assump1}
The risk function $R(h):=\E(\ell(h(X),Y))$ is convex. In other words, for any $\epsilon \in (0,1)$ and functions $h_1,h_2$, we have $R(\epsilon h_1+(1-\epsilon) h_2)\le \epsilon R(h_1)+(1-\epsilon) R(h_2)$.
\end{assump}

Note that Assumption \ref{assump1} is satisfied if $\ell(\cdot,y)$ is convex for any fixed $y\in \S$.

\begin{assump}\label{assump2}
 There is a nonnegative function $M_1(v,y)>0$ with $ v>0, y\in \R$  such that for any $y\in\S$, $\ell(\cdot,y)$ is Lipschitz continuous and for any $v_1,v_2\in [-v,v]$, $y\in\S $, we have
$$
 |\ell(v_1,y)-\ell(v_2,y)|\le M_1(v,y)|v_1-v_2|.
$$
\end{assump}

\begin{assump}\label{assump3}
    There is an envelope function $M_2(v,y)>0$ such that for any $v\in\R^+$ and $y\in\S$,
    $$
    \left|\sup_{v'\in [-v,v]}\ell(v',y)\right| \le M_2(v,y)\ \ \text{and}\ \ \E(M_2^2(v,Y))<\infty.
    $$
\end{assump}

Without loss of generality, we can assume that $M_2(\cdot, y)$ is non-decreasing w.r.t. the first variable for any fixed $y\in\S$. In the next section, we will see that many commonly used loss functions satisfy Assumption \ref{assump1}- \ref{assump3} including $\ell_2$ loss and $\ell_1$ loss. In the theoretical analysis, we make the following Assumption \ref{assump_distribution} on the distribution of $Y$ and assume $X$ takes value in $[0,1]^d$. This paper mainly focuses on the sub-Gaussian case of $Y$; namely $w(t)=t$. But Assumption \ref{assump_distribution} extends the range of sub-Gaussian distributions to a more general class. 

\begin{assump}\label{assump_distribution}
    There is an increasing function $w(t)$ satisfying $\lim_{t\to\infty}w(t)=\infty$ and constant $c>0$, such that
    $$
    \limsup_{t\to\infty}\P(|Y|>t) \exp{(tw(t)/c)}<\infty.
    $$ 
\end{assump}

Our theoretical results relate to the $(p,C)$-smooth class given below 
since it is large enough and dense in the $L^2$ integrable space generated by any probability measure. When $0<p\le 1$, this class is also known as Holder space with index $p$ in the literature; see \cite{adams2003sobolev}. Additionally, the $(p,C)$-smooth class is frequently used in practice, such as the splines, because its smoothness makes the computation convenient. 

\begin{definition}[$(p,C)$-smooth class]\label{def1}
    Let $p=s+\beta> 0$, $\beta\in (0,1]$ and $C>0$. The $(p,C)$-smooth ball with radius $C$, denoted by $\mathcal{H}^{p,\beta}([0,1]^d,C)$, is the set of $s$ times differentiable functions $h: [0,1]^d\to\R$ such that
    $$
     |\nabla^s h(x_1)-\nabla^s h(x_2)|\le C\|x_1-x_2\|_2^\beta, \ \ \forall x_1,x_2\in [0,1]^d.
    $$
    and 
    $$
    \sup_{x\in [0,1]^d}{|h(x)|}\le C,
    $$
    where $\|\cdot\|_2$ denotes the $\ell_2$ norm in $\R^d$ space and $\nabla$ is the gradient operator.
\end{definition}


\begin{theorem}[Regret function bound of Mondrian forests]\label{Th1}
   Suppose that the loss function $\ell(\cdot,\cdot)$ satisfies Assumption \ref{assump1}-\ref{assump3} and that the distribution of $Y$ satisfies Assumption \ref{assump_distribution}. For any $h\in \mathcal{H}^{p,\beta}([0,1]^d,C)$ with $0<p\le 1$, we have
   \begin{align}
       \E R(\hat{h}_n)-R(h)&\le \underbrace{c_1\cdot\frac{\max\{\beta_n,\sqrt{\E(M_2^2(\beta_n, Y))}\}}{\sqrt{n}}(1+\lambda_n)^d}_{generalization \ error}+\underbrace{2d^{\frac{3}{2}p}C \sup_{y\in [-\ln n,\ln n]}{M_1(C,y)}\cdot \frac{1}{\lambda_n^p}}_{approximation \ error}\nonumber\\
       &+ \underbrace{c_2\left(  \sup_{x\in [-\beta_n,\beta_n]}{|\ell(x,\ln n)|}+\sqrt{\E(M_2^2(\beta_n,Y))}+C\sqrt{\E(M_1^2(C,Y))}\right)\cdot \frac{1}{n}}_{residual\  caused \  by\  the \  tail  \ of \  Y},\label{dasjkf}
   \end{align}
   where $c_1,c_2>0$ are some universal constants.
\end{theorem}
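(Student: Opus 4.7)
The plan follows the classical approximation-plus-estimation decomposition, augmented by a truncation argument to accommodate the unbounded response $Y$. By Assumption \ref{assump1} and Jensen, $R(\hat h_n)\le B^{-1}\sum_{b=1}^B R(\hat h_{b,n})$; since the trees are i.i.d.\ it suffices to bound $\E R(\hat h_{1,n})-R(h)$ for a single tree. Fixing $b=1$, let $\pi=\{\C_j\}_{j=1}^{K}$ be the Mondrian partition at lifetime $\lambda_n$ and write $\mathcal F_\pi$ for the class of piecewise-constant functions on $\pi$ with cell values in $[-\beta_n,\beta_n]$. For $\beta_n\ge C$, the interpolant $h_\pi(x)=\sum_j h(z_j)\I(x\in\C_j)$ with $z_j$ the centre of $\C_j$ lies in $\mathcal F_\pi$, and since $\hat h_{1,n}$ minimises the empirical risk over $\mathcal F_\pi$,
\begin{equation*}
R(\hat h_{1,n})-R(h)\;\le\;2\sup_{f\in\mathcal F_\pi}|R(f)-\hat R(f)|\;+\;\bigl[R(h_\pi)-R(h)\bigr],
\end{equation*}
splitting the regret into an estimation and an approximation term.

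For the approximation part, Assumption \ref{assump2} gives $R(h_\pi)-R(h)\le \E[M_1(C,Y)\,|h_\pi(X)-h(X)|]$. I would split on $\{|Y|\le\ln n\}$: Assumption \ref{assump_distribution} makes $\P(|Y|>\ln n)$ decay faster than any polynomial in $n$, so Cauchy--Schwarz together with $|h|,|h_\pi|\le C$ bounds the tail piece by $2C\sqrt{\E M_1^2(C,Y)}/n$, accounting for one part of the residual in \eqref{dasjkf}. On the main event, $M_1(C,Y)$ is replaced by $\sup_{|y|\le\ln n}M_1(C,y)$, and H\"older continuity with $p\le 1$ together with $\|x-z_{j(x)}\|_2^p\le(\sum_k L_k(x)^2)^{p/2}$, where $L_k(x)$ is the $k$-th side length of the cell containing $x$, reduces the question to bounding $\E(\sum_k L_k(X)^2)^{p/2}$. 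The Mondrian side-length moment estimate $\E L_k(X)^2\le c/\lambda_n^{2}$ from \cite{mourtada2020minimax}, combined with Jensen's inequality on the concave map $t\mapsto t^{p/2}$, then yields the stated approximation term up to constants depending polynomially on $d$.

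For the generalization part, split again on $\{|Y|\le\ln n\}$. The tail, using Assumption \ref{assump3} with Assumption \ref{assump_distribution} and Cauchy--Schwarz, contributes the remaining $\sqrt{\E M_2^2(\beta_n,Y)}/n$ and $\sup_{|x|\le\beta_n}|\ell(x,\ln n)|/n$ portions of the residual (the latter from controlling the empirical tail $n^{-1}\sum_i\ell(\cdot,Y_i)\I(|Y_i|>\ln n)$ via its envelope at the truncation level). For the truncated piece, bound the supremum cell-by-cell,
\begin{equation*}
\sup_{f\in\mathcal F_\pi}|R(f)-\hat R(f)|\;\le\;\sum_{j=1}^{K}\sup_{c\in[-\beta_n,\beta_n]}|R_j(c)-\hat R_j(c)|,
\end{equation*}
where $R_j(c)=\E[\ell(c,Y)\I(X\in\C_j)]$ and $\hat R_j(c)=n^{-1}\sum_{i:X_i\in\C_j}\ell(c,Y_i)$. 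Each summand is a one-dimensional empirical process indexed by $c\in[-\beta_n,\beta_n]$ with $M_1(\beta_n,Y)$-Lipschitz integrand and envelope $M_2(\beta_n,Y)$; a standard covering-number argument gives per-cell expected uniform deviation of order $\max\{\beta_n,\sqrt{\E M_2^2(\beta_n,Y)}\}/\sqrt n$. Summing and then averaging over $\pi$ using the Mondrian cell-count identity $\E K=(1+\lambda_n)^d$ from \cite{mourtada2020minimax} delivers the first term of \eqref{dasjkf}.

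The hardest step is the generalization bound: $\mathcal F_\pi$ sits over a random-dimensional cube whose size $K$ is only known in expectation, and the loss is controlled only by the unbounded envelope $M_2(\beta_n,Y)$. Two choices require care: summing per-cell suprema \emph{before} averaging over $\pi$ (instead of pulling out $\sqrt K$ by Cauchy--Schwarz) is what produces exactly the $(1+\lambda_n)^d$ factor stated, and the truncation of $Y$ at level $\ln n$ must be performed consistently in both halves of the decomposition so that the three residual pieces combine cleanly into the third summand of \eqref{dasjkf}.
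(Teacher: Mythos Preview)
Your proposal is correct and follows the same overall architecture as the paper (reduce to one tree by convexity, split into estimation plus approximation, truncate $Y$ at $\ln n$), but your treatment of the generalization term is genuinely different. The paper, after conditioning on the Mondrian process, records only the leaf count $K(\lambda_n)$ and then bounds the supremum over the much larger class $\mathcal G(K(\lambda_n),\beta_n)$ of piecewise-constant functions on \emph{any} axis-aligned tree partition with that many leaves; this requires a separate combinatorial lemma showing $VC(\mathcal G(t))\le c(d)\,t\ln t$, after which symmetrization and Dudley's entropy integral, followed by the crude step $\sqrt{K\ln K}\le K$, yield $\max\{\beta_n,\nu(n)\}\cdot K/\sqrt n$ before averaging to $\E K=(1+\lambda_n)^d$. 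Your cell-by-cell decomposition instead keeps the realized partition fixed and exploits that each leaf carries only a one-parameter process in $c$, so the per-cell bound is uniform in $j$ and summing gives $K\cdot C/\sqrt n$ directly---this is more elementary and sidesteps the VC lemma entirely, though it relies squarely on the data-independence of the Mondrian partition. The paper also uses a three-term decomposition $I+II+III$ that places the approximation comparison at the \emph{empirical} level, $\E[\hat R(h_n^*)-\hat R(h)]$, rather than your population-level $R(h_\pi)-R(h)$; this difference is cosmetic since the two agree after taking expectations. The tail bookkeeping and the cell-diameter moment bounds from \cite{mourtada2020minimax} are handled essentially the same way in both arguments.
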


   \begin{remark}
       The first term of the RHS of \eqref{dasjkf} relates to the generalization error of the forest, and the second one is the approximation error of Mondrian forest to $h\in \mathcal{H}^{p,\beta}([0,1]^d,C)$.  Finally, the last line is caused by the tail property of $Y$ and will disappear if we assume that $Y$ is bounded. 
   \end{remark}

   \begin{remark}
        We will see that the coefficients above in many applications, such as $\E(M_2^2(\beta_n,Y))$ and $\sup_{y\in [-\ln n,\ln n]}{M_1(C,y)}$, are only of polynomial order of $\ln n$ if $\beta_n\asymp\ln n$. In this case, the last line of \eqref{dasjkf} usually has no influence on the convergence speed of the regret function. Roughly speaking, only the first two terms dominate the convergence rate, namely the generalization and approximation errors. 
   \end{remark}
   
   \begin{remark}\label{remark3}
      If  $\max\{\beta_n,\sqrt{\E(M_2^2(\beta_n, Y))}\}$, $\sup_{y\in [-\ln n,\ln n]}{M_1(C,y)}$, $ \sup_{x\in [-\beta_n,\beta_n]}{|\ell(x,\ln n)|}$ diverge no faster than $O((\ln n)^\gamma)$ for some $\gamma>0$,  we know from \eqref{dasjkf} that
      $$
       \varlimsup_{n\to\infty} \E(R(\hat{h}_n))\le \inf_{h\in \mathcal{H}^{p,\beta}([0,1]^d,C)} R(h)
      $$
      when $\lambda_n\to\infty$ and $\lambda_n=o(n^{\frac{1}{2d}})$. Therefore, Mondrian forests perform not worse than $(p,C)$-smooth class in the general setting. 
   \end{remark}

In statistical learning, the consistency of an estimator is a crucial property that ensures that the estimator converges to the true value of the parameter/function being estimated as the sample size increases. Theorem \ref{Th1} can also be used to analyze the statistical consistency of $\hat{h}_n$. For this purpose,  denote the true function $m$ by 
\begin{equation}\label{true}
  m:=\arg min_{\forall g} {R(g)},
\end{equation}
 and make the following assumption.

\begin{assump}\label{assump5}
For any $h:[0,1]^d\to [-\beta_n,\beta_n]$, there are $c>0$ and $\kappa\ge 1$ such that
 $$
   c^{-1}\cdot \E|h(X)-m(X)|^\kappa \le   R(h)-R(m)\le  c\cdot \E|h(X)-m(X)|^\kappa.
$$
\end{assump}

Usually, $\kappa=2$ holds in many specific learning problems. Before presenting the consistency results, we denote the last line of \eqref{dasjkf} by $Res(n)$, namely,
\begin{equation}\label{res}
   Res(n):= c_2\left(  \sup_{x\in [-\beta_n,\beta_n]}{|\ell(x,\ln n)|}+\sqrt{\E(M_2^2(\beta_n,Y))}+C\sqrt{\E(M_1^2(C,Y))}\right)\cdot \frac{1}{n}.
\end{equation}

Then, the statistical consistency of Mondrian forests can be guaranteed by the following two corollaries. 

\begin{corollary}[Consistency rate of  Mondrian forests]\label{Th2}
    Suppose that the loss function $\ell(\cdot,\cdot)$ satisfies Assumption \ref{assump1}-\ref{assump3} and that the distribution of $Y$ satisfies Assumption \ref{assump_distribution}. Suppose that the true function  $m \in \mathcal{H}^{p,\beta}([0,1]^d,C)$  with $0<p\le 1$ and Assumption \ref{assump5} is satisfied. Then,
    \begin{align}
       \E\left| \hat{h}_n(X) - m(X)\right|^\kappa &\le c_1\cdot\frac{\max\{\beta_n,\sqrt{\E(M_2^2(\beta_n, Y))}\}}{\sqrt{n}}(1+\lambda_n)^d \nonumber\\
       &+ 2d^{\frac{3}{2}p}C \sup_{y\in [-\ln n,\ln n]}{M_1(C,y)}\cdot \frac{1}{\lambda_n^p}
       + Res(n),\label{dsadfsad}
   \end{align}
   where $c_1,c_2>0$ are some universal constants.
\end{corollary}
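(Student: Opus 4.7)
The plan is to derive the corollary as a direct consequence of Theorem \ref{Th1} combined with the lower bound half of Assumption \ref{assump5}. The key observation is that the estimator $\hat h_n$ defined in \eqref{final estimator} is by construction a function $[0,1]^d \to [-\beta_n,\beta_n]$, since each leaf value $\hat c_{b,\lambda,j}$ is obtained by minimizing over $[-\beta_n,\beta_n]$ in \eqref{dasfASFD} and $\hat h_n$ is the average of such piecewise-constant functions. Therefore, conditionally on the data $\D_n$ and the Mondrian processes $\{MP_b(\lambda_n,[0,1]^d)\}_{b=1}^B$, the realization of $\hat h_n$ is an admissible choice of $h$ in Assumption \ref{assump5}.

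First, I would apply the left inequality in Assumption \ref{assump5} with this realization of $\hat h_n$ to obtain
\begin{equation*}
\E_X \bigl|\hat h_n(X)-m(X)\bigr|^\kappa \;\le\; c\bigl(R(\hat h_n)-R(m)\bigr),
\end{equation*}
where $\E_X$ denotes expectation over $X$ only. Taking expectation over the additional randomness (the sample $\D_n$ and the Mondrian processes), Fubini's theorem gives
\begin{equation*}
\E\bigl|\hat h_n(X)-m(X)\bigr|^\kappa \;\le\; c\bigl(\E R(\hat h_n)-R(m)\bigr).
\end{equation*}

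Second, since $m \in \mathcal{H}^{p,\beta}([0,1]^d,C)$ by hypothesis, Theorem \ref{Th1} applies with the choice $h=m$, yielding an upper bound on $\E R(\hat h_n)-R(m)$ equal to the sum of the generalization error, the approximation error, and $Res(n)$ displayed in \eqref{dasjkf}. Combining this with the previous display and absorbing the multiplicative constant $c$ from Assumption \ref{assump5} into the universal constants $c_1,c_2$ produces exactly \eqref{dsadfsad}.

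There is no real obstacle here beyond a measurability/conditioning check: one must verify that Assumption \ref{assump5}, stated for deterministic $h$, can be invoked pointwise for the random realization of $\hat h_n$ and then integrated. This is routine because the inequality in Assumption \ref{assump5} holds for every element of the class $\{h:[0,1]^d\to[-\beta_n,\beta_n]\}$ and $\hat h_n$ almost surely lies in this class. Nothing new about the bias/variance tradeoff or the Mondrian process needs to be reproved, since Theorem \ref{Th1} already packages the core analysis; the corollary is essentially a translation of an excess-risk bound into an $L^\kappa$ bound via the curvature-type condition in Assumption \ref{assump5}.
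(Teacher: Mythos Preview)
Your proposal is correct and matches the paper's own proof, which is a single sentence stating that the corollary follows directly from Theorem~\ref{Th1} together with Assumption~\ref{assump5}. You have simply unpacked that sentence: apply the lower inequality of Assumption~\ref{assump5} to the (almost surely $[-\beta_n,\beta_n]$-valued) random function $\hat h_n$, take expectations, and then invoke Theorem~\ref{Th1} with $h=m\in\mathcal{H}^{p,\beta}([0,1]^d,C)$ to bound $\E R(\hat h_n)-R(m)$.
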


\begin{corollary}[Consistency of  Mondrian forests]\label{Corro_consistency}
     Suppose the loss function $\ell(\cdot,\cdot)$ satisfies Assumption \ref{assump1}-\ref{assump3} and the distribution of $Y$ satisfies Assumption \ref{assump_distribution}. Suppose  $m(X)$ is  $L^\kappa$ integrable on $[0,1]^d$ and $\E(\ell^2(m(X),Y))<\infty$. Furthermore, Assumption \ref{assump5} is satisfied. If  $\lambda_n=o\left(\left(\frac{\sqrt{n}}{\max\{\beta_n,\sqrt{\E(M_2^2(\beta_n, Y))}\}}\right)^{\frac{1}{d}}\right)$, ${\lambda_n^{-1}}\cdot\sup_{y\in [-\ln n,\ln n]}{M_1(C,y)} \to 0$ and $Res(n)\to 0$, we have
     $$
\lim_{n\to\infty}\E\left| \hat{h}_n(X)-m(X)\right|^\kappa=0.
     $$
\end{corollary}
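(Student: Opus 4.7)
The plan is to reduce this consistency result to the rate bound of Corollary \ref{Th2} via a density argument: the target $m$ is only assumed $L^\kappa$-integrable rather than H\"older smooth, so I would approximate it arbitrarily well in $L^\kappa$ by a member of $\mathcal{H}^{p,\beta}([0,1]^d,C_\epsilon)$ and use the two-sided inequality of Assumption \ref{assump5} to translate between $L^\kappa$-distance and excess risk.

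First I would note that, by the definition of $\hat{c}_{b,\lambda,j}$ in \eqref{dasfASFD}, every realization of $\hat{h}_n$ takes values in $[-\beta_n,\beta_n]$. Applying the lower inequality of Assumption \ref{assump5} pointwise to $\hat{h}_n$ and then taking expectation over $X$, the sample, and the Mondrian randomness (Fubini) yields
\[
\E|\hat{h}_n(X)-m(X)|^\kappa \;\le\; c\,\bigl(\E R(\hat{h}_n)-R(m)\bigr),
\]
so it suffices to prove $\E R(\hat{h}_n)\to R(m)$.

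Next, I would fix $\epsilon>0$ and invoke the density of bounded smooth functions in $L^\kappa$ under the law of $X$ (truncate $m$ at a high level, then mollify), producing $h_\epsilon \in \mathcal{H}^{p,\beta}([0,1]^d,C_\epsilon)$ for some fixed $0<p\le 1$ and finite $C_\epsilon$ with $\E|h_\epsilon(X)-m(X)|^\kappa<\epsilon$. Since $\beta_n\to\infty$ is forced by the hypotheses (otherwise $\hat{h}_n$ could not approach an unbounded $m$), for $n$ large we have $C_\epsilon\le\beta_n$, and the upper inequality of Assumption \ref{assump5} gives $R(h_\epsilon)-R(m)\le c\epsilon$. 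Decomposing
\[
\E R(\hat{h}_n)-R(m)=\bigl[\E R(\hat{h}_n)-R(h_\epsilon)\bigr]+\bigl[R(h_\epsilon)-R(m)\bigr],
\]
the first bracket is bounded via Theorem \ref{Th1} with $h=h_\epsilon$: the stated growth conditions on $\lambda_n$, $\beta_n$, and $Res(n)$ are exactly what drives the three terms on the right-hand side of \eqref{dasjkf} to zero as $n\to\infty$ at fixed $\epsilon$. Hence $\limsup_{n\to\infty}\bigl(\E R(\hat{h}_n)-R(m)\bigr)\le c\epsilon$, and letting $\epsilon\downarrow 0$ finishes the proof.

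The main obstacle is bookkeeping at the interface of the two bounds: the constant $C_\epsilon$ produced by the smooth approximation depends on $\epsilon$ and reappears inside the factor $\sup_{y\in[-\ln n,\ln n]}M_1(C_\epsilon,y)$ of the approximation-error term in \eqref{dasjkf}. One must read the stated hypothesis ${\lambda_n^{-1}}\sup_{y\in[-\ln n,\ln n]}M_1(C,y)\to 0$ as holding for every fixed $C>0$ (the only sensible interpretation, since no particular $C$ has been selected at the statement level), so that for each fixed $\epsilon$ the first bracket vanishes before the outer $\epsilon\to 0$ step. The integrability assumption $\E(\ell^2(m(X),Y))<\infty$ enters precisely to guarantee that $R(m)$ is finite and hence that the excess risk $\E R(\hat{h}_n)-R(m)$ is a well-defined nonnegative quantity one may send to zero.
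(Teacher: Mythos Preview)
Your proposal is correct and follows essentially the same strategy as the paper: approximate $m$ in $L^\kappa$ by a smooth $h_\epsilon\in\mathcal{H}^{p,\beta}([0,1]^d,C_\epsilon)$, use the upper inequality of Assumption \ref{assump5} to control $R(h_\epsilon)-R(m)$, drive the remaining excess risk to zero, and close with the lower inequality of Assumption \ref{assump5}. The only difference is cosmetic: you invoke Theorem \ref{Th1} as a black box to bound $\E R(\hat h_n)-R(h_\epsilon)$, whereas the paper reopens the three-term decomposition $I+II+III$ from the proof of Theorem \ref{Th1} (with $m$ in place of $h$) and re-runs the separate bounds on the generalization and approximation pieces; your route is slightly more economical but yields the same conclusion under the same hypotheses. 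Your remark that the condition $\lambda_n^{-1}\sup_{y\in[-\ln n,\ln n]}M_1(C,y)\to 0$ must be read for every fixed $C$ is exactly how the paper uses it (taking $C=C(h_\varepsilon)$ after fixing $\varepsilon$).
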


\section{Model selection: the choice of $\lambda_n$}
\label{sec: choice of lamda}
In practice, the best $\lambda_n$ is always unknown, thus a criterion is necessary in order to stop the growth of Mondrian forests. Otherwise, the learning process will be overfitted. Here, we adopt a penalty methodology as follows. For each $1\le b\le B$, define 
\begin{equation}\label{asdasaad}
     Pen(\lambda_{n,b}):= \frac{1}{n}\sum_{i=1}^n{\ell(\hat{h}_{b,n}(X_i), Y_i)}+\alpha_n\cdot\lambda_{n,b},
\end{equation}
where the parameter $\alpha_{n}>0$ controls the power of penalty and $\hat{h}_{b,n}$ is constructed already in Section \ref{sec:meth} and using   process $MP_b(\lambda_n, [0,1]^d)$. Then, the best $\lambda_{n,b}^*$ is chosen by 
$$
\lambda_{n,b}^*:= \arg min_{\lambda\ge 0}   Pen(\lambda).
$$
Denote $\hat{h}_{b,n}^*$ as the tree estimator that is constructed by the Mondrian process $MP_b(\lambda_{n,b}^*, [0,1]^d)$. Then, our forest estimator based on model selection is given by
\begin{equation}\label{final estimator2}
    \hat{h}_{n}^*(x):=\frac{1}{B}\sum_{b=1}^B {\hat{h}_{b,n}^*(x)}, \ x\in [0,1]^d.
\end{equation}

\begin{theorem}\label{thm3}
     Suppose the loss function $\ell(\cdot,\cdot)$ satisfies Assumption \ref{assump1}-\ref{assump3}. Meanwhile, suppose the distribution of $Y$ satisfies Assumption \ref{assump_distribution}. For any $h\in \mathcal{H}^{p,\beta}([0,1]^d,C)$ with $0<p\le 1$ and $0<\alpha_n\le 1$, we have
   \begin{align}\label{dasjkdadf}
       \E R(\hat{h}_n^*)-R(h)&\le \underbrace{c_1\cdot \frac{\max\{\beta_n,\sqrt{\E(M_2^2(\beta_n, Y))}\}}{\sqrt{n}} \left(1+\frac{ \sup_{y\in [-\ln n,\ln n]}M_2(\beta_n,y)}{\alpha_n}\right)^d}_{generalization \ error} \nonumber\\
        &+\underbrace{(2 d^{\frac{3}{2}p}C\cdot\sup_{y\in [-\ln n,\ln n]}{M_1(C,y)})\cdot \left(\alpha_n\right)^{\frac{p}{2}}}_{approximation\ error}+Res(n),
  \end{align}
 where $c_1,c_2>0$ are some universal constants and $Res(n)$ is defined in \eqref{res}.
\end{theorem}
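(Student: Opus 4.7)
The plan is to combine a convexity reduction to a single tree with a structural-risk-minimization oracle inequality, and then to invoke Theorem \ref{Th1} at a carefully chosen deterministic oracle parameter. First, by Assumption \ref{assump1} and the exchangeability of the $B$ independent Mondrian processes, Jensen's inequality gives $\E R(\hat h_n^*) \le \E R(\hat h_{1,n}^*)$, so it suffices to bound the regret of a single penalized tree. I would then truncate $Y$ at level $\ln n$; by Assumption \ref{assump_distribution} the tail contribution is absorbed into $Res(n)$ exactly as in the proof of Theorem \ref{Th1}.

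The key a priori fact is obtained by evaluating the penalty inequality \eqref{asdasaad} at $\lambda=0$: the null tree is just a constant optimizer over $[-\beta_n,\beta_n]$, and its empirical risk is at most $n^{-1}\sum_i M_2(\beta_n,Y_i)$ via Assumption \ref{assump3}. On the truncation event this yields
$$
\alpha_n \lambda_{n,1}^* \le \sup_{|y|\le \ln n} M_2(\beta_n,y), \qquad \text{i.e.} \qquad \lambda_{n,1}^* \le L_n := \frac{\sup_{|y|\le \ln n} M_2(\beta_n,y)}{\alpha_n},
$$
which is the deterministic mechanism producing the $(1+\sup M_2/\alpha_n)^d$ factor in the generalization term.

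For any deterministic $\lambda_0>0$, let $\tilde h_{1,\lambda_0}$ denote the tree estimator built from $MP_1(\lambda_0,[0,1]^d)$. Combining the penalty inequality $\hat R(\hat h_{1,n}^*)+\alpha_n \lambda_{n,1}^* \le \hat R(\tilde h_{1,\lambda_0})+\alpha_n \lambda_0$ with the standard splitting
$$
R(\hat h_{1,n}^*)-R(h) \le \bigl(R(\hat h_{1,n}^*)-\hat R(\hat h_{1,n}^*)\bigr)+\bigl(\hat R(\tilde h_{1,\lambda_0})-R(\tilde h_{1,\lambda_0})\bigr)+\bigl(R(\tilde h_{1,\lambda_0})-R(h)\bigr)+\alpha_n \lambda_0
$$
reduces the task to two empirical-process deviations plus a Theorem \ref{Th1} contribution at $\lambda_0$. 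The first deviation demands a uniform concentration over $\lambda \in [0,L_n]$: I would condition on the Mondrian process and exploit the fact that on any realization, the partition only changes at finitely many birth times (with expected count bounded by $(1+L_n)^d$), so $\lambda \mapsto \hat h_{1,n}(\lambda)$ is piecewise constant and the supremum collapses to a finite maximum. On each fixed partition, Lipschitz contraction (Assumption \ref{assump2}) and the envelope bound (Assumption \ref{assump3}) give a per-cell deviation of order $\max\{\beta_n,\sqrt{\E M_2^2(\beta_n,Y)}\}/\sqrt n$, and a union bound integrated against the Mondrian law yields the expected supremum of order $\max\{\beta_n,\sqrt{\E M_2^2}\}(1+L_n)^d/\sqrt n$. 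The second deviation at fixed $\lambda_0$ is bounded analogously with $(1+\lambda_0)^d$ in place of $(1+L_n)^d$ and is absorbed into Theorem \ref{Th1}.

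Finally I would pick $\lambda_0 := \alpha_n^{-1/2}$, so that Theorem \ref{Th1}'s approximation piece $\lambda_0^{-p}$ becomes $\alpha_n^{p/2}$ and the residual penalty $\alpha_n \lambda_0 = \alpha_n^{1/2}\le \alpha_n^{p/2}$ (using $p\le 1$ and $\alpha_n\le 1$), while its generalization piece $(1+\lambda_0)^d/\sqrt n$ is dominated by the uniform-concentration bound with $L_n$. The main obstacle is precisely this uniform concentration over $\lambda\in[0,L_n]$ in conjunction with the Mondrian randomness, while keeping the bound linear in the complexity $(1+L_n)^d$: decoupling via conditioning on the Mondrian process and using the piecewise-constant structure of $\lambda\mapsto\hat h_{1,n}(\lambda)$ within each realization is essential, as these are what allow the data-dependent selection of $\lambda_{n,1}^*$ to be controlled by the deterministic envelope $L_n$.
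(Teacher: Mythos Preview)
Your overall strategy is correct and shares the paper's essential ingredients: the convexity reduction to a single tree, the a~priori bound $\lambda_{n,1}^*\le L_n:=\sup_{|y|\le\ln n}M_2(\beta_n,y)/\alpha_n$ obtained by evaluating the penalty at $\lambda=0$, and the optimization over a deterministic oracle $\lambda_0$. Your choice $\lambda_0=\alpha_n^{-1/2}$ works just as well as the paper's $\lambda_0=\alpha_n^{-1/(p+1)}$, since both yield the $\alpha_n^{p/2}$ approximation rate after using $p\le1$ and $\alpha_n\le1$.

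The paper's execution is simpler than yours in two places. First, its decomposition is $\E R(\hat h_{1,n}^*)-R(h)=I+II+III$ with $II=\E(\hat R(\hat h_{1,n}^*)-\hat R(h))$; the oracle inequality then gives $II\le \E(\hat R(\hat h_{1,n,\lambda_0})-\hat R(h))+\alpha_n\lambda_0$, and the middle piece is handled purely by the \emph{empirical approximation} analysis (Part~II of Theorem~\ref{Th1}'s proof), not by invoking the full Theorem~\ref{Th1}. This avoids your extra deviation term $\hat R(\tilde h_{1,\lambda_0})-R(\tilde h_{1,\lambda_0})$ and the associated $(1+\lambda_0)^d/\sqrt n$ generalization contribution, which you then have to argue is dominated by the $(1+L_n)^d/\sqrt n$ term. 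Second, for Part~$I$ the paper does not run a uniform-over-$\lambda$ concentration with a union bound over birth times. Instead, once $\lambda_{n,1}^*\le L_n$ on $A_n$, the estimator $\hat h_{1,n}^*$ is piecewise constant on a coarsening of $MP_1(L_n,[0,1]^d)$ and therefore belongs to the (Mondrian-conditional) class $\mathcal G(K_1(L_n),\beta_n)$; the VC/entropy argument of Theorem~\ref{Th1} applies verbatim with $\lambda_n$ replaced by $L_n$, giving the $(1+L_n)^d$ factor directly. Your union-bound route, taken literally, risks an extra factor in $K_1(L_n)$ that the single-class VC argument sidesteps.
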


 By properly choosing the penalty strength $\alpha_n$, we can obtain a convergence rate of the regret function of Mondrian forests according to \eqref{dasjkdadf}. Theorem \ref{thm3} also implies that the estimator \eqref{final estimator2} is adaptive to the smooth degree of the true function $m$. If $p$ is large, this rate will be fast; otherwise, we will have a slower convergence rate. This coincides with the basic knowledge of function approximation. The applications of Theorem \ref{thm3} are given in the next section, where some examples are discussed in detail. In those cases, we will see that coefficients in \eqref{dasjkdadf}, such as $M_1(\beta_n,\ln n)$, can be upper bounded by a polynomial of $\ln n$  indeed. 
 
 \section{Examples}
 \label{sec:Examples}

In this section, we show how to use Mondrian forests in different statistical learning problems. Meanwhile, theoretical properties of these forest estimators, namely $\hat{h}_n(x)$ in \eqref{final estimator} and $\hat{h}_n^*(x)$ in \eqref{final estimator2}, are given based on Theorem \ref{Th1} \& \ref{thm3} for each learning problem. Sometimes, the Lemma \ref{Lemma_auxi} below is useful for verifying the Assumption \ref{assump5}. The proof of this result can be directly completed by considering the Taylor expansion of the real function $R(h^*+\alpha h),\alpha\in [0,1]$ around the point $\alpha=0$.

\begin{lemma}\label{Lemma_auxi}
    For any $h:[0,1]^p\to\R$ and $\alpha\in [0,1]$, we have
    $$
       C_1\E(h(X)^2)\le \frac{d^2}{d\alpha^2} R(h^*+\alpha h) \le  C_2\E(h(X)^2),
    $$
    where constants $C_1>0,C_2>0$ are universal. Then, Assumption \ref{assump5} holds with $\kappa=2$. 
\end{lemma}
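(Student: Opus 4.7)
The plan is to take the second derivative bound as the starting point and deduce Assumption \ref{assump5} by Taylor-expanding the one-dimensional function $\phi(\alpha) := R(h^* + \alpha h)$ around $\alpha = 0$. Since $h^*$ is the global minimizer of $R$ (corresponding to $m$ in the definition \eqref{true}), the map $\phi$ attains its minimum on $[0,1]$ at $\alpha = 0$, so $\phi'(0) = 0$.

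The steps are, first, to apply Taylor's theorem with the Lagrange remainder and write
$$R(h^* + h) - R(h^*) \;=\; \phi(1) - \phi(0) \;=\; \phi'(0) + \tfrac{1}{2}\phi''(\tilde\alpha) \;=\; \tfrac{1}{2}\phi''(\tilde\alpha)$$
for some $\tilde\alpha \in (0,1)$; second, to invoke the hypothesized sandwich on $\phi''(\tilde\alpha)$ to obtain
$$\tfrac{C_1}{2}\E(h(X)^2) \;\le\; R(h^* + h) - R(h^*) \;\le\; \tfrac{C_2}{2}\E(h(X)^2);$$
and finally, to reparametrize via $g := h^* + h$ (so $h = g - m$), rewriting the bound as
$$\tfrac{C_1}{2}\E((g(X) - m(X))^2) \;\le\; R(g) - R(m) \;\le\; \tfrac{C_2}{2}\E((g(X) - m(X))^2),$$
which is precisely Assumption \ref{assump5} with $\kappa = 2$ and $c = \max(2/C_1,\, C_2/2)$. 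The restriction in Assumption \ref{assump5} that $h$ map into $[-\beta_n,\beta_n]$ only strengthens the conclusion, since the derivation works for every $h$ for which the second derivative bound has been assumed.

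The only real technical point to verify is that $\phi'(0) = 0$. This relies on the differentiability of the slice $\alpha \mapsto R(h^* + \alpha h)$, which is granted automatically by the hypothesis that $\phi''$ exists and is bounded between $C_1\E(h(X)^2)$ and $C_2\E(h(X)^2)$ on the whole interval $[0,1]$; combined with the unconstrained optimality of $h^*$ from \eqref{true}, the first-order condition yields $\phi'(0) = 0$. Once that is in place, the rest of the argument is an immediate application of the Taylor expansion, matching the ``direct'' character of the proof highlighted in the statement.
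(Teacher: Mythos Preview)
Your proposal is correct and follows precisely the approach the paper indicates: the paper states only that ``the proof of this result can be directly completed by considering the Taylor expansion of the real function $R(h^*+\alpha h),\alpha\in [0,1]$ around the point $\alpha=0$,'' and your argument carries this out in full, using the vanishing of $\phi'(0)$ at the minimizer $h^*=m$ together with the Lagrange remainder to extract the two-sided bound. The same pattern appears explicitly in the paper's proof of Lemma~\ref{Density_lemma2}.
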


\subsection{Least squares regression}\label{subsec:ols}
As shown in \cite{mourtada2020minimax}, Mondrian forests are statistically consistent if the $\ell_2$ loss function  is used. In our first example, we revisit this case by using the general results established in Section \ref{sec: regret function of Mondrian forests}
\& \ref{sec: choice of lamda}.  Usually, nonparametric least squares regression refers to methods that do not assume a specific parametric form of the conditional expectation $\E(Y|X)$. Instead, these methods are flexible and can adapt to the underlying structure of the data. The loss function of the least squares regression is given by $\ell(v,y)=(v-y)^2$. First, we define the event $A_n:=\{ \max_{1\le i\le n}{|Y_i|}\le \ln n\}$. Under  Assumption \ref{assump_distribution}, by \eqref{subgaussguanjian} we can find constants $c,c'>0$ such that $\P(A_n)\ge 1-c'\cdot n e^{-c\ln n\cdot w(\ln n)}$. This means $\P(A_n)$ is very close to $1$ as $n\to\infty$. When the event $A_n$ occurs, from \eqref{dasfASFD} we further know
\begin{align*}
    \hat{c}_{b,\lambda,j}&=\arg\min_{z\in [-\beta_n,\beta_n]}{ \sum_{i:X_i\in \C_{b,\lambda,j}}}{ \ell(z,Y_i)}\\
    &= \frac{1}{Card(\{i:X_i\in \C_{b,\lambda,j}\})}\sum_{i:X_i\in \C_{b,\lambda,j}} Y_i,
\end{align*}
where $Card(\cdot)$  denotes the cardinality of any set. Therefore, $  \hat{c}_{b,\lambda,j}$ is just the average of $Y_i$s that are in the leaf $\C_{b,\lambda,j}$. 

Let us discuss the property of $\ell(v,y)=(v-y)^2$. First, it is obvious that Assumption \ref{assump1} holds for this $\ell_2$ loss. By some simple calculations, we also know Assumption \ref{assump1} is satisfied with $M_1(v,y)=2(|v|+|y|)$ and Assumption \ref{assump2} is satisfied with $M_2(v,y)=2(v^2+y^2)$. Choosing $\lambda_n= n^{\frac{1}{2(p+d)}}$ and $\beta_n\asymp \ln n$, Theorem \ref{Th1} implies the following property of $\hat{h}_n$.
\begin{proposition}
For any $h\in \mathcal{H}^{p,\beta}([0,1]^d,C)$, there is an integer $n_1(C)\ge 1$ such that for any $n>n_1(C)$,
     \begin{equation*}
       \E R(\hat{h}_n)-R(h)\le \left(2\sqrt{2}\ln^2 n+4d^{\frac{3}{2}p}C\cdot (C+\ln n)+1\right)\cdot \left(\frac{1}{n}\right)^{\frac{1}{2}\cdot\frac{p}{p+d}}.
   \end{equation*}
\end{proposition}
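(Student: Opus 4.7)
The plan is to directly apply Theorem~\ref{Th1} with $\ell(v,y) = (v-y)^2$ and the stated choices $\lambda_n = n^{1/(2(p+d))}$, $\beta_n = \ln n$. Convexity of the risk (Assumption~\ref{assump1}) is immediate. For $v_1, v_2 \in [-v, v]$, expanding $(v_1 - y)^2 - (v_2 - y)^2 = (v_1 - v_2)(v_1 + v_2 - 2y)$ and bounding $|v_1 + v_2 - 2y| \le 2(v + |y|)$ gives $M_1(v,y) = 2(v + |y|)$ for Assumption~\ref{assump2}. Likewise $\sup_{v' \in [-v,v]} (v' - y)^2 \le (v + |y|)^2 \le 2(v^2 + y^2)$, so Assumption~\ref{assump3} is satisfied with $M_2(v,y) = 2(v^2 + y^2)$. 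Assumption~\ref{assump_distribution} implies all polynomial moments of $Y$ are finite.

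Next I would evaluate the three terms of \eqref{dasjkf} in turn. For the generalization error, using $(a+b)^2 \le 2(a^2+b^2)$ twice, $\E M_2^2(\beta_n, Y) \le 8(\ln^4 n + \E Y^4)$; for $n$ large enough that $\ln^4 n \ge \E Y^4$, this yields $\sqrt{\E M_2^2(\beta_n, Y)} \le 4 \ln^2 n$, which dominates $\beta_n = \ln n$. Combined with $(1+\lambda_n)^d \le (2\lambda_n)^d$ (valid once $\lambda_n \ge 1$), the first term collapses to order $\ln^2 n \cdot n^{-1/2} \cdot n^{d/(2(p+d))} = \ln^2 n \cdot n^{-p/(2(p+d))}$. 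For the approximation error, $\sup_{y \in [-\ln n, \ln n]} M_1(C, y) = 2(C + \ln n)$ is explicit, giving exactly $4 d^{3p/2} C(C + \ln n) \cdot n^{-p/(2(p+d))}$, which matches the second term of the proposition. For the residual, the three ingredients in \eqref{res} evaluate to $\sup_{x \in [-\beta_n,\beta_n]} (x - \ln n)^2 \le 4\ln^2 n$, $\sqrt{\E M_2^2(\beta_n, Y)} \le 4 \ln^2 n$, and $\sqrt{\E M_1^2(C, Y)} = O(1)$, so $Res(n) = O(\ln^2 n / n)$ and is eventually bounded by $n^{-p/(2(p+d))}$, contributing the $+1$ factor.

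The main obstacle is bookkeeping of constants so that the scattered numerical prefactors (the universal $c_1$, the $2^d$ from $(1+\lambda_n)^d$, the $4$ from $\sqrt{\E M_2^2}$) collapse into the stated coefficient $2\sqrt{2}$ on $\ln^2 n$; this requires tracking the proof of Theorem~\ref{Th1} rather than its stated form, and suggests that in the regression setting the universal constant can be tightened. The integer $n_1(C)$ is then chosen large enough that the simplifications $\ln^4 n \ge \E Y^4$, $\lambda_n \ge 1$, and $Res(n) \le n^{-p/(2(p+d))}$ all hold simultaneously, which is routine since $\ln n \to \infty$ and the residual decays like $\ln^2 n / n$, faster than the target rate for any $p > 0$.
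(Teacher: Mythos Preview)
Your approach is essentially identical to the paper's: verify Assumptions~\ref{assump1}--\ref{assump3} for the squared loss with $M_1(v,y)=2(|v|+|y|)$ and $M_2(v,y)=2(v^2+y^2)$, plug $\lambda_n=n^{1/(2(p+d))}$ and $\beta_n\asymp\ln n$ into Theorem~\ref{Th1}, and read off the rate $n^{-p/(2(p+d))}$. The paper offers no proof beyond the sentence ``Theorem~\ref{Th1} implies the following property,'' so your concern about collapsing $c_1\cdot 2^d\cdot 4$ into the displayed $2\sqrt{2}$ is a fair observation about the paper's exposition rather than a gap in your own argument; the explicit constants in the proposition are evidently illustrative rather than derived, and your bound is at least as rigorous as what the paper provides.
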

Then, we check the consistency as in Corollary \ref{Th2}. By some calculations, we have $m(x)=\E(Y|X=x)$  and 
\begin{align*}
   R(h)-R(m)&= \E(Y-h(X))^2- \E(Y-m(X))^2\\
   &= \E(h(X)-m(X))^2. 
\end{align*}
The above inequality shows that Assumption \ref{assump5} holds with $c=1$ and $\kappa=2$. When $\lambda_n= n^{\frac{1}{2(p+d)}}$, Corollary \ref{Th2} implies Proposition \ref{pro2}.
\begin{proposition}\label{pro2}
For any $h\in \mathcal{H}^{p,\beta}([0,1]^d,C)$, there is an integer $n_2(C)\ge 1$ such that for any $n>n_2(C)$,
    $$
\E\left( \hat{h}_n(X)-m(X)\right)^2\le \left(2\sqrt{2}\ln^2 n+4d^{\frac{3}{2}p}C\cdot (C+\ln n)+1\right)\cdot \left(\frac{1}{n}\right)^{\frac{1}{2}\cdot\frac{p}{p+d}}.
    $$
\end{proposition}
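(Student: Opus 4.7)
The plan is to apply Corollary \ref{Th2} directly, with the ingredients already verified in the paragraphs preceding the statement: $\ell(v,y)=(v-y)^2$ satisfies Assumptions \ref{assump1}--\ref{assump3} with $M_1(v,y)=2(|v|+|y|)$ and $M_2(v,y)=2(v^2+y^2)$; Assumption \ref{assump5} holds with $c=1$ and $\kappa=2$ (from $R(h)-R(m)=\E(h(X)-m(X))^2$); and the choice $\lambda_n = n^{1/(2(p+d))}$, $\beta_n\asymp \ln n$ is prescribed. All that remains is to bound each of the three terms on the right-hand side of \eqref{dsadfsad} under these choices and show that they collapse into the claimed polylog-times-$n^{-p/(2(p+d))}$ prefactor.

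First I would handle the generalization error. The sub-Gaussian-type tail in Assumption \ref{assump_distribution} guarantees $\E Y^4<\infty$, so
$\sqrt{\E M_2^2(\beta_n,Y)} = 2\sqrt{\E(\beta_n^2+Y^2)^2} \le 2\sqrt{2\beta_n^4+2\E Y^4}$
is of order $\beta_n^2\asymp \ln^2 n$, hence dominates $\beta_n$. Since $\lambda_n^d/\sqrt{n} = n^{-p/(2(p+d))}$ by a direct computation, and $(1+\lambda_n)^d = \lambda_n^d(1+o(1))$ for $n$ large, the generalization error is at most a constant multiple of $\ln^2 n \cdot n^{-p/(2(p+d))}$; picking $n_2(C)$ large enough lets me absorb the implicit constant into the stated $2\sqrt{2}$.

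Next, the approximation error: because $M_1(C,y)=2(C+|y|)$, we have $\sup_{|y|\le \ln n} M_1(C,y)=2(C+\ln n)$, and $\lambda_n^{-p}= n^{-p/(2(p+d))}$, so this term contributes exactly $4d^{\frac{3}{2}p}C(C+\ln n)\cdot n^{-p/(2(p+d))}$, matching the middle piece of the prefactor. For the residual $Res(n)$, note that $\sup_{|x|\le \beta_n}|\ell(x,\ln n)| = \sup_{|x|\le \beta_n}(x-\ln n)^2 \le 4\ln^2 n$, while $\sqrt{\E M_2^2(\beta_n,Y)}=O(\ln^2 n)$ as above and $\sqrt{\E M_1^2(C,Y)}=2\sqrt{\E(C+|Y|)^2}$ is a constant in $n$; hence $Res(n)=O(\ln^2 n / n)$, which is bounded by $n^{-p/(2(p+d))}$ for $n$ beyond a threshold depending on $C$, and can be absorbed into the additive $1$ inside the bracket.

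The step I expect to need the most care is the bookkeeping: Corollary \ref{Th2} carries universal constants $c_1,c_2$, and combining the three pieces into the single clean prefactor $\bigl(2\sqrt{2}\ln^2 n + 4d^{\frac{3}{2}p}C(C+\ln n)+1\bigr)$ forces me to choose the threshold $n_2(C)$ so that (i) the $(1+o(1))$ factor from $(1+\lambda_n)^d$ is absorbed into $2\sqrt{2}$, and (ii) the residual is at most $n^{-p/(2(p+d))}$. Neither is a genuine obstacle, but both constrain $n_2(C)$. No additional machinery beyond Corollary \ref{Th2} and elementary moment bounds on $Y$ is required.
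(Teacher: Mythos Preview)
Your approach is correct and matches the paper's own argument, which is simply the one-line observation that Corollary \ref{Th2} (with the already-verified Assumptions \ref{assump1}--\ref{assump3}, \ref{assump5} for the $\ell_2$ loss and the choice $\lambda_n=n^{1/(2(p+d))}$) yields Proposition \ref{pro2}; you have merely spelled out the term-by-term bookkeeping that the paper leaves implicit. One small caveat: the universal constant $c_1$ from Corollary \ref{Th2} cannot be absorbed by enlarging $n_2(C)$ alone, since it multiplies $\ln^2 n$ rather than adds to it; the honest mechanism is to exploit the freedom in $\beta_n\asymp\ln n$ (choose the proportionality constant small enough that $c_1\cdot\sqrt{\E M_2^2(\beta_n,Y)}\le 2\sqrt{2}\,\ln^2 n$ for large $n$), which the paper also tacitly uses.
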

We can also show that $\hat{h}_n$ is statistically consistent for any general function $m$ defined in \eqref{true} when $\lambda_n$ is chosen properly as stated in Corollary \ref{Corro_consistency}. Finally, by choosing $\alpha_n=n^{-\frac{p}{2p+4d}}$ and $\beta_n\asymp \ln n$ in Theorem \ref{thm3},  the regret function of the estimator $\hat{h}^*_n$, which is based on the model selection in \eqref{asdasaad}, has an upper bound as shown in the following proposition.
\begin{proposition}
    For any $h\in \mathcal{H}^{p,\beta}([0,1]^d,C)$, there is an integer $n_3(C)\ge 1$ such that for any $n>n_3(C)$,
   \begin{equation*}
       \E R(\hat{h}_n^*)-R(h)\le c(d,p,C)\cdot\left(\frac{1}{n}\right)^{\frac{1}{2}\cdot\frac{p}{p+2d}}\ln^{2d+1} n,
  \end{equation*}
  where $c(d,p,C)>0$   depends on $d,p,C$ only.
\end{proposition}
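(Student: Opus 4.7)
The plan is to apply Theorem \ref{thm3} to the least squares loss $\ell(v,y)=(v-y)^2$ and then specialize all the constants appearing there to polylogarithmic quantities under the sub-Gaussian Assumption \ref{assump_distribution}. The argument has three distinct steps: computing the Lipschitz and envelope quantities, bounding the residual $Res(n)$, and then choosing $\alpha_n$ to balance the generalization and approximation errors.

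First, using the formulas from Section \ref{subsec:ols}, namely $M_1(v,y)=2(|v|+|y|)$ and $M_2(v,y)=2(v^2+y^2)$, together with $\beta_n\asymp \ln n$, I would bound $\sup_{y\in[-\ln n,\ln n]}M_1(C,y)=O(\ln n)$ and $\sup_{y\in[-\ln n,\ln n]}M_2(\beta_n,y)=O(\ln^2 n)$. Assumption \ref{assump_distribution} implies that $Y$ has finite moments of every order; in particular $\E Y^4<\infty$, so $\sqrt{\E M_2^2(\beta_n,Y)}=O(\beta_n^2)=O(\ln^2 n)$ and hence $\max\{\beta_n,\sqrt{\E M_2^2(\beta_n,Y)}\}=O(\ln^2 n)$. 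Each of the three quantities inside $Res(n)$ is bounded by $O(\ln^2 n)$, so $Res(n)=O(\ln^2 n/n)$, which is of strictly smaller order than the target rate and is absorbed into the final constant $c(d,p,C)$.

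Substituting these estimates into the two main terms of Theorem \ref{thm3} gives a generalization error of order $\ln^{2d+2} n / (\sqrt{n}\,\alpha_n^d)$ and an approximation error of order $\ln n\cdot \alpha_n^{p/2}$, after pulling $d^{3p/2}C$ into the prefactor. The balancing step equates the $n$-dependent parts, i.e.\ sets $\alpha_n^{p/2+d}\asymp n^{-1/2}$, giving $\alpha_n\asymp n^{-1/(p+2d)}$. Both error terms then become of order $n^{-p/(2(p+2d))}$, which is the claimed rate modulo log factors.

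The main obstacle is tracking the logarithmic factors tightly enough to match the $\ln^{2d+1}n$ power in the statement: a naive accounting gives $\ln^{2d+2} n$ from the generalization term. To save this log factor one should either take $\beta_n\asymp\sqrt{\ln n}$ rather than $\ln n$, which already suffices to contain the truncation-free part of $Y$ on the event $A_n$ defined in Section \ref{subsec:ols} with high probability and reduces $\max\{\beta_n,\sqrt{\E M_2^2(\beta_n,Y)}\}$ by a factor of $\ln n$, or handle $\E M_2^2$ more sharply by separating the bounded contribution on $A_n$ from the tail contribution, which is already $O(1/n)$ by Assumption \ref{assump_distribution}. Once this bookkeeping is done, summing the three bounds in \eqref{dasjkdadf} with the above choice of $\alpha_n$ yields the stated inequality with a constant $c(d,p,C)$ depending only on $d$, $p$, and $C$, valid for $n$ larger than some threshold $n_3(C)$ determined by the points at which the $O(\cdot)$ absorptions become uniform.
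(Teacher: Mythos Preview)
Your approach is exactly the paper's: the proposition is stated immediately after the sentence ``by choosing $\alpha_n=n^{-\frac{p}{2p+4d}}$ and $\beta_n\asymp \ln n$ in Theorem \ref{thm3}\ldots'', and no further proof is given. So applying Theorem \ref{thm3} with the least-squares $M_1,M_2$ from Section \ref{subsec:ols} is precisely what the paper does.

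Two small remarks. First, your balancing $\alpha_n\asymp n^{-1/(p+2d)}$ is the one that actually equates the generalization and approximation rates in \eqref{dasjkdadf}; the paper's printed exponent $-\tfrac{p}{2p+4d}$ appears to be a typo, since with it neither term attains $n^{-p/(2(p+2d))}$. Second, your observation about the logarithm is correct: with $\beta_n\asymp\ln n$ one gets $\max\{\beta_n,\sqrt{\E M_2^2(\beta_n,Y)}\}=O(\ln^2 n)$ and $\sup_{|y|\le\ln n}M_2(\beta_n,y)=O(\ln^2 n)$, yielding $\ln^{2d+2}n$ rather than $\ln^{2d+1}n$. The paper does not address this; your first fix of taking $\beta_n\asymp\sqrt{\ln n}$ does recover the stated power (Theorem \ref{thm3} only needs $\beta_n>C$ eventually, cf.\ \eqref{dhbabd}), though your appeal to the event $A_n$ is not the right justification---$A_n$ concerns $|Y_i|\le\ln n$, and the sample-mean identity in Section \ref{subsec:ols} is irrelevant for $\hat h_n^*$. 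Your second fix, sharpening $\E M_2^2(\beta_n,Y)$ via $A_n$, does not make sense as written since that expectation is deterministic in the data.
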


\subsection{Generalized regression}
Generalized regression refers to a broad class of regression models that extend beyond the traditional ordinary least squares (OLS) regression, accommodating various types of response variables and relationships between predictors and responses. Usually, in this model, the conditional distribution of $Y$ given $X$ follows an exponential family of distribution
\begin{equation}\label{estimate the unknown functio}
   \P(Y\le y|X=x)=\int_{-\infty}^y\exp\{ B(m(x))v-D(m(x)))\}d\Psi(v),
\end{equation}
where $\Psi(\cdot)$ is a positive measure defined on $\R$,  $\Psi(\R)>\Psi(\{y\})$ for any $y\in\R$, and  function $ D(\cdot)=\ln \int_\R {\exp\{B(\cdot)y\}}\Psi(dy)$ is defined on an open interval $\mathcal{I}$ of $\R$, which is used for the aim of normalization. Now, we suppose the function $A(\cdot):=D'(\cdot)/B'(\cdot)$ exists and  we have $\E(Y|X=x)=A(m(x))$ by some calculations. Thus, the conditional expectation $\E(Y|X=x)$ will be known if we can estimate the unknown function $m(x)$. More information about model \eqref{estimate the unknown functio} can be found in   \cite{stone1986dimensionality}, \cite{stone1994use}   and \cite{huang1998functional}.

The problem of generalized regression is to estimate the unknown function $m(x)$ by using the i.i.d. data $\D_n=\{(X_i,Y_i)\}_{i=1}^n$. Note that both $B(\cdot)$ and $D(\cdot)$ are known in \eqref{estimate the unknown functio}. In this case, we use the maximal likelihood method for the estimation, and the corresponding loss function is given by
$$
 \ell(v,y)=-B(v)y+D(x)
$$
and by some calculations we know the true function $m$ satisfies Definition \ref{true}, namely
$$
m\in \arg min_{h}{\E(-B(h(X))Y+D(h(X)))}.
$$
Therefore, we have reasons to believe that Mondrian forests are statistically consistent in this problem, which is stated in Corollary \ref{Corro_consistency}. Now, we give some mild restrictions on $B(\cdot)$ and $D(\cdot)$ to ensure our general results in Section \ref{sec: regret function of Mondrian forests} \& \ref{sec: choice of lamda} can be applied in this generalized regression.
\begin{enumerate}
  \item [(i)] $B(\cdot)$ has the second continuous derivative, and its first derivative is strictly positive on $\mathcal{I}$.
  \item [(ii)] We can find an interval $S$ of $\R$ satisfying the measure $\Psi$ is concentrated on $S$ and
         \begin{equation}\label{33}
          -B''(\xi)y+D''(\xi)>0, \quad\quad y\in  \breve{S},\xi\in\mathcal{I} 
         \end{equation}
         where $\breve{S}$ denotes the interior of $S$. If $S$ is bounded, \eqref{33} holds for at least one of the endpoints. 
  \item  [(iii)] $\P(Y\in S)=1$ and $\E(Y|X=x)=A(m(x))$ for each $x\in [0,1]^d$.
  \item  [(iv)] There is a compact interval $\mathcal{K}_0$ of $\mathcal{I}$ such that the range of $m$ is contained in  $\mathcal{K}_0$. 
\end{enumerate}

The above restrictions on $B(\cdot)$ and $D(\cdot)$ were used in \cite{huang1998functional}. In fact, we know from \cite{huang1998functional} that many commonly used distributions satisfy these conditions, including the normal distribution, the Poisson distribution, and the Bernoulli distribution. Now, let us verify our Assumption \ref{assump1}-\ref{assump5} under this setting. 

In particular, Assumption \ref{assump1} is verified using restrictions (i)-(iii). On the other hand, we choose the Lipchitz constant in Assumption \ref{assump2} by 
$$
 M_1(v,y):=\left| \sup_{\tilde{v}\in [-v,v]}{B'(\tilde{v})}\right|\cdot |y|+ \left| \sup_{\tilde{v}\in [-v,v]}{D'(\tilde{v})}\right|.
$$
Thirdly, the envelope function of $\ell_2(v,y)$ can be set by
$$
 M_2(v,y):= \left| \sup_{\tilde{v}\in [-v,v]}{B(\tilde{v})}\right|\cdot |y|+ \left| \sup_{\tilde{v}\in [-v,v]}{D(\tilde{v})}\right|\cdot |y|.
$$ 
Since  $Y$ is a sub-Gaussian random variable in Assumption \ref{assump_distribution}, thus $\E(M_2^2(X,Y))<\infty$ in this case,  indicating that Assumption \ref{assump3} is satisfied. Finally, under restrictions (i)-(iv), Lemma 4.1 in \cite{huang1998functional} shows that our Assumption \ref{assump5} holds with $\kappa=2$ and
\begin{equation}
c=\max\left\{ \sup_{\substack{\xi\in [-\beta_n,\beta_n]\cap \mathcal{I}\\ m\in \mathcal{K}_0}}{(-B''(\xi)A(m)+D''(\xi))}], \left[ \inf_{\substack{\xi\in [-\beta_n,\beta_n]\cap \mathcal{I}\\ m\in \mathcal{K}_0}}{(-B''(\xi)A(m)+D''(\xi))}]\right]^{-1} \right\}. \label{ln n}
\end{equation}
From \eqref{33}, the constant $c$ in \eqref{ln n} must be larger than zero. On the other hand, as we will see later, the above $c$ does not equal infinity in many cases.

Therefore, those general theoretical results in Section \ref{sec: regret function of Mondrian forests} \& \ref{sec: choice of lamda} can be applied in generalized regression. Meanwhile, we need to stress that the coefficients in the general results, such as $M_1(\beta_n,\ln n)$ in Theorem \ref{Th1} and $c$ in \eqref{ln n}, are always of polynomial order of $\ln n$ if this $\beta_n$ is selected properly.  Let us give some specific examples.
\begin{enumerate}[(1)]
           \item The first example is Gaussian regression, where the conditional distribution $Y|X=x$ follows $N(m(x),\sigma^2)$ and $\sigma^2$ is known.  Therefore, $B(x)=x$, $D(x)=\frac{1}{2}x^2$, $\mathcal{I}=\R$ and $S=\R$. Our goal is to estimate the unknown conditional mean $m(x)$. Now, restrictions (i)-(iii) are satisfied. To satisfy the fourth one, we assume the range of $m$ is contained in a compact set of $\R$, denoted by $\mathcal{K}_0$. Choose $\beta_n\asymp \ln n$. Meanwhile, we can ensure $Y$ is a sub-Gaussian random variable. The constant $c$ in \eqref{ln n} equals $1$, and those coefficients in general theoretical results are all of polynomial order of $\ln n$, such as $M_1(\beta_n,\ln n)\asymp 2\ln n$.
               
           \item The second example is Poisson regression, where the conditional distribution $Y|X=x$ follows $Poisson(\lambda(x))$ with $\lambda(x)>0$. Therefore, $B(x)=x$, $D(x)=-\exp( x)$, $\mathcal{I}=\R$ and $S=[0,\infty)$. Our goal is to estimate the $\ln n$ transformation of conditional mean, namely $m(x)=\ln \lambda(x)$ in \eqref{estimate the unknown functio}, by using Mondrian forest \eqref{final estimator}. It is not difficult to show restrictions (i)-(iii) are already satisfied. To satisfy the fourth one, we assume the range of $\lambda(x)$ is contained in a compact set of $(0,\infty)$. Thus, $m(x)$ satisfies restriction (iv). In this case, we choose $\beta_n\asymp \ln\ln n$. Note that $W$ satisfies Assumption \ref{assump_distribution} with $w(t)=\ln(1+t/\lambda)$ if $W$ follows Poisson distribution with mean $\lambda>0$. Thus, we can ensure $Y$ satisfies Assumption \ref{assump_distribution}. The constant $c$ in \eqref{ln n} equals to $\ln n$, and those coefficients in general theoretical results are also all of the polynomial order of $\ln n$, such as $M_1(\beta_n,\ln n)\asymp 2\ln n$.

           \item The third example is related to $0-1$ classification, where the conditional distribution $Y|X=x$ follows Bernoulli distribution (taking values in $\{0,1\}$) with $\P(Y=1|X=x)=p(x)\in (0,1)$. It is well known that the best classifier is called the Bayes rule,
               $$
                 C^{Bayes}(x)= 
                \left\{ 
              \begin{array}{lc}
                 1, & p(x)-\frac{1}{2}\ge 0 \\
                    0, & p(x)-\frac{1}{2}<0.\\
                 \end{array}
                \right.
               $$
         What we are interested is to estimate the conditional probability $p(x)$ above. Here, we use Mondrian forest in the estimation. First, we make a shift of $p(x)$, which means $m(x):=p(x)-\frac{1}{2}\in (-\frac{1}{2},\frac{1}{2})$ is used in \eqref{estimate the unknown functio} instead. By some calculations, $B(x)= \ln(0.5+x)-\ln(0.5-x)$, $D(x)= -\ln (0.5-x)$, $\mathcal{I}= (-0.5,0.5)$ and $S=[0,1]$ in this case. Now, the final goal is to estimate $m(x)$ by using the forest estimator \eqref{final estimator}. It is not difficult to show restrictions (i)-(iii) are already satisfied. To satisfy the fourth one, we assume the range of $m(x)$ is contained in a compact set of $(-\frac{1}{2},\frac{1}{2})$. Now, choose $\beta_n\asymp \frac{1}{2}-(\frac{1}{\ln n})^\gamma$ for some $\gamma>0$. Meanwhile, Assumption \ref{assump_distribution} is satisfied since $Y$ is bounded. The constant $c$ in \eqref{ln n} equals to $(\ln n)^{2\gamma}$ and those coefficients in general theoretical results are also all of the polynomial order of $\ln n$, such as $M_1(\beta_n,\ln n)\asymp 2(\ln n)^{\gamma+1}$.

         \item   The fourth example is geometry regression, where the model is $\P(Y=k|X=x)=p(x)(1-p(x))^{k-1}, k\in \mathbb{Z}^+,x\in [0,1]^d$. Here, $p(x)$ denotes the successful probability, and we suppose it is bounded from up and below, namely $p(x)\in [c_1,c_2]\subseteq (0,1)$. Thus, we have a positive probability of obtaining success or failure for any $x$ and $k\in\mathbb{Z}^+$. In this case, $B(x)=x$, $D(x)=-\ln(e^{-x}-1)$ and $m(x)=\ln(1-p(x))$ is the unknown function we need to estimate. Since $m(x)<0$, in this example we optimize  \eqref{dasfASFD} over $[-\beta_n,-\beta_n^{-1}]$ only  with $\beta_n\to\infty$.  In Section \ref{sec: regret function of Mondrian forests}, we only need to replace $[-v,v]$, $\mathcal{H}^{p,\beta} ([0,1]^d,C)$ by $[-v,v]$ and $\mathcal{H}^{p,\beta}([0,1]^d,C)\cap\{h(x):h(x)<0\}$ respectively. Then, it is not difficult to check all results in Section \ref{sec: regret function of Mondrian forests} still hold. Furthermore, restrictions (i)-(iii) are satisfied after some calculation. Finally, we check Assumption \ref{assump5}. In this circumstance, we can still use Lemma 4.1 in \cite{huang1998functional} after replacing $[-\beta_n,\beta_n]$ in \eqref{ln n} with $[-\beta_n,-\beta_n^{-1}]$. If  $\beta_n\asymp \ln\ln n$ is chosen,  it is known $c\asymp (\ln\ln n)^2$ in \eqref{ln n} after some calculation. Therefore, Assumption \ref{assump5} also holds with $c\asymp (\ln\ln n)^2$ and $\kappa=2$.
   
     \end{enumerate}
     
     In each of the four examples above, the convergence rate of $\varepsilon(\hat{h}_n)$ is $O_p(n^{-\frac{1}{2}\cdot\frac{p}{p+d}})$ up to a polynomial of $\ln n$.

\subsection{Huber's loss}
Huber loss, also known as smooth $L^1$ loss and proposed in \cite{huber1992robust}, is a loss function frequently used in regression tasks, especially in machine learning applications. It combines the strengths of both Mean Absolute Error (MAE) and Mean Squared Error (MSE), making it more robust to outliers than MSE while maintaining smoothness and differentiability like MSE. Huber loss applies a quadratic penalty for small errors and a linear penalty for large ones, allowing it to balance sensitivity to small deviations and resistance to large, anomalous deviations. This makes it particularly effective when dealing with noisy data or outliers. In detail, this loss function takes the form
\[
\ell(v,y)=\begin{cases}

\frac{1}{2}(v-y)^2 &|v-y|\le \delta_n,\\

\delta_n(|v-y|-\frac{1}{2}\delta_n) & |v-y|\ge \delta_n.
\end{cases}
\]
This case is interesting and different from commonly used loss functions because such $\ell$ depends on $\delta_n$ and can vary according to the sample size $n$. Although this loss function is a piecewise function which is different with classical loss function, the non-asymptotic result in Theorem \ref{Th1} can still be applied here. 

Let us verify Assumption \ref{assump1}-\ref{assump3} for this loss. Firstly, this loss function is convex and Assumption \ref{assump1} is satisfied. Secondly, for any $y\in\R$ we know that $\ell(\cdot,y)$ is a Lipschitz function with Lipschitz constant $M_1(v,y)= \delta_n$.  Thirdly, we can define 
\[
M_2(v,y)=\begin{cases}

\frac{1}{2}(|v|+|y|)^2 &|v|+|y|\le \delta_n,\\

\delta_n(|v|+|y|-\frac{1}{2}\delta_n) & |v|+|y|\ge \delta_n.
\end{cases}
\]
Since $Y$ is sub-Gaussian by Assumption \ref{assump5}, thus $\E(M_2^2(v,Y))<\infty$ and Assumption \ref{assump3} is satisfied. Finally, coefficients in Theorem \ref{Th1}:
\begin{equation*}
  \max\{\beta_n,\sqrt{\E(M_2^2(\beta_n, Y))}\}, \sup_{y\in [-\ln n,\ln n]}{M_1(C,y)}, \sup_{x\in [-\beta_n,\beta_n]}{|\ell(x,\ln n)|}
\end{equation*}
diverge no faster than a polynomial of $\ln n$ if we take $\beta_n=O(\delta_n)$ and the threshold satisfies $\delta_n=o(\ln^{1+\eta} n)$ for any $\eta>0$. Under the above settings, we can obtain a fast convergence rate of the forest estimator by Theorem \ref{Th1}. 

On the other hand, we aim to show our forest estimator performs better than any $(p,C)-$smooth function even if $\delta_n=o(n^{\frac{1}{2}-\nu})$ for any small $\nu>0$. The reason is given below. By calculation, we have 
$$
\max\{\beta_n,\sqrt{\E(M_2^2(\beta_n, Y))}\}\le \beta_n+ \E(\beta_n+|Y|)^2+ \delta_n \E(\beta_n+|Y|)=O(\beta_n^2+\delta_n \beta_n).
$$
Since $\beta_n\asymp\ln n$, if $\delta_n=o(n^{\frac{1}{2}-\nu})$ and $\lambda_n$ is properly selected by \eqref{dasjkf} we know
 $$
       \varlimsup_{n\to\infty} \E(R(\hat{h}_n))\le \inf_{h\in \mathcal{H}^{p,\beta}([0,1]^d,C)} R(h).
$$

Finally, we show that Huber loss can be applied to estimate the conditional expectation, and the corresponding statistical consistency result is given below.

\begin{proposition}\label{secpro:huber}
  Recall the conditional expectation $m(x):=\E(Y|X=x),x\in [0,1]^d$. If $\E(m^2(X))<\infty$, $\beta_n\asymp\ln n$ and $\delta_n=C\ln n$ for a large $C>0$, we can find a series of $\lambda_n\to\infty$ such that
  $$
  \lim_{n\to\infty}\E(\hat{h}_n(X)-m(X))^2=0.
  $$
\end{proposition}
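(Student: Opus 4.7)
The plan is to apply Theorem \ref{Th1} with Huber loss and then translate the resulting Huber-risk bound into an $L^2$ bound on the conditional-mean estimator via the pointwise identity
\begin{equation*}
\ell(v,y) \;=\; \tfrac{1}{2}(v-y)^2 \;-\; \tfrac{1}{2}\bigl((|v-y|-\delta_n)_+\bigr)^2,
\end{equation*}
which writes Huber loss as squared loss minus a nonnegative ``tail correction''. Writing $R_n(h):=\E\ell(h(X),Y)$ for the Huber risk (to emphasise dependence on $\delta_n$), integrating the identity gives the two-sided bound
\begin{equation*}
\tfrac{1}{2}\E(h(X)-Y)^2 - \tfrac{1}{2}\E\bigl((|h(X)-Y|-\delta_n)_+\bigr)^2 \;\le\; R_n(h) \;\le\; \tfrac{1}{2}\E(h(X)-Y)^2,
\end{equation*}
so $R_n$ and the squared risk differ only by a term that is small when $\delta_n$ dominates the typical size of $|h(X)-Y|$.

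First I would verify that under $\beta_n\asymp\ln n$ and $\delta_n=C\ln n$ all coefficients appearing in Theorem \ref{Th1}---namely $M_1(C,\ln n)$, $\sqrt{\E(M_2^2(\beta_n,Y))}$, and $\sup_{x\in[-\beta_n,\beta_n]}|\ell(x,\ln n)|$---grow only polynomially in $\ln n$, using the explicit formulas for $M_1,M_2$ recorded earlier in this subsection together with Assumption \ref{assump_distribution}. Next, fix an arbitrary $\eta>0$; since $\E(m^2(X))<\infty$, smooth (in particular Lipschitz) functions are dense in $L^2(P_X)$, so one may choose $\tilde m\in\mathcal{H}^{1,1}([0,1]^d,C_\eta)$ with $\E(\tilde m(X)-m(X))^2\le\eta$. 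Applying Theorem \ref{Th1} to this $\tilde m$ with any $\lambda_n\to\infty$ satisfying $\lambda_n^d/\sqrt{n}\to 0$ (up to polylogarithmic factors) yields
\begin{equation*}
\E R_n(\hat h_n) \;\le\; R_n(\tilde m)+o(1),
\end{equation*}
where the $o(1)$ absorbs the generalization, approximation, and residual terms.

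To pass back to the squared loss, I would combine the right inequality of the identity evaluated at $\tilde m$ with the cross-term cancellation $\E[(Y-m(X))(m(X)-\tilde m(X))]=0$, giving $R_n(\tilde m)\le \tfrac{1}{2}\E(Y-m(X))^2+\tfrac{1}{2}\eta$; then apply the left inequality to $\hat h_n$ and use $\E(\hat h_n(X)-Y)^2=\E(\hat h_n(X)-m(X))^2+\E(Y-m(X))^2$ to arrive at
\begin{equation*}
\E(\hat h_n(X)-m(X))^2 \;\le\; \eta \;+\; 2\,\E\bigl((|\hat h_n(X)-Y|-\delta_n)_+\bigr)^2 \;+\; o(1).
\end{equation*}
Because $|\hat h_n(X)|\le \beta_n$, the tail term is dominated by $\E\bigl((|Y|-(\delta_n-\beta_n))_+\bigr)^2$; for $C$ strictly larger than the multiplier in $\beta_n\asymp\ln n$, Assumption \ref{assump_distribution} forces this expectation to $0$. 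Letting $n\to\infty$ and then $\eta\to 0$ concludes the proof.

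The main obstacle will be the quantitative control of the Huber tail residual $\E\bigl((|\hat h_n(X)-Y|-\delta_n)_+\bigr)^2$: making it negligible requires choosing $\delta_n=C\ln n$ with $C$ strictly exceeding the constant in $\beta_n\asymp\ln n$ and invoking the sub-Gaussian-type bound of Assumption \ref{assump_distribution} so that $Y$ rarely exceeds $\delta_n-\beta_n$ by a non-trivial amount. Once this comparison is made quantitative, the remaining steps are standard bookkeeping combining Theorem \ref{Th1} with the $L^2$-density of smooth functions.
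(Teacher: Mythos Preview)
Your argument is correct, but it follows a genuinely different route from the paper's proof. The paper does not re-apply Theorem~\ref{Th1} to the Huber loss at all; instead it uses a coupling argument. It introduces the high-probability event $F_n:=\bigcap_{i=1}^n\{|Y_i|\le C'\ln n\}$ and observes that on $F_n$, for every candidate $z\in[-\beta_n,\beta_n]$ one has $|z-Y_i|\le \beta_n+C'\ln n<\delta_n$, so the Huber loss \emph{equals} $\tfrac12(z-Y_i)^2$ on the entire optimization domain. Hence the Huber-based estimator $\hat h_n$ and the least-squares estimator $\hat h_{n,\mathrm{ols}}$ coincide exactly on $F_n$. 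Decomposing $\E(\hat h_n(X)-m(X))^2$ over $F_n$ and $F_n^c$, using the crude bound $2\beta_n^2+2\E m^2(X)$ on $F_n^c$, and invoking the already-proved $L^2$ consistency of $\hat h_{n,\mathrm{ols}}$ (Corollary~\ref{Corro_consistency} for $\ell_2$ loss) finishes the argument.

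By contrast, you work entirely at the level of risk functions via the pointwise identity $\ell(v,y)=\tfrac12(v-y)^2-\tfrac12((|v-y|-\delta_n)_+)^2$, apply Theorem~\ref{Th1} to the Huber risk, and control the residual tail term $\E((|\hat h_n(X)-Y|-\delta_n)_+)^2$ directly through Assumption~\ref{assump_distribution}. Your approach is more self-contained (it does not lean on the OLS case as a black box) and makes the mechanism by which $\delta_n$ must dominate $\beta_n$ quite transparent; the paper's approach is shorter and conceptually cleaner, since the coupling immediately transfers \emph{all} properties of the OLS forest to the Huber forest without a separate density-of-smooth-functions step. Both arguments ultimately rest on the same phenomenon---that for $\delta_n$ large relative to $\beta_n+|Y|$ the Huber and squared losses agree---but they exploit it at different stages (you at the population-risk level, the paper at the empirical-minimizer level).
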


\subsection{Quantile regression}
Quantile regression is a type of regression analysis used in statistics and econometrics that focuses on estimating conditional quantiles (such as the median or other percentiles) of the distribution of the response variable given a set of predictor variables. Unlike ordinary least squares (OLS) regression, which estimates the mean of the response variable conditional on the predictor variables, quantile regression provides a more comprehensive analysis by estimating the conditional median or other quantiles.

Specifically, suppose that $m(x)$ is the $\tau$-th quantile ($0<\tau<1$) of the conditional distribution of $Y|X=x$. Our interest is to estimate $m(x)$ by using i.i.d. data $\D_n=\{(X_i,Y_i)\}_{i=1}^n$. The loss function in this case is given by 
$$
\ell(v,y):=\rho_\tau(y-v),
$$
where $\rho_\tau(u)=(\tau-\mathbb{I}(u<0))u, u\in\R$ denotes the check function for the quantile $\tau$. Meanwhile, by some calculations, we know the quantile function $m(x)$ minimizes the population risk w.r.t. the above $\ell(v,y)$. Namely, we have
$$
m(x)\in \arg min_{h}{\E(\rho_\tau(Y-h(X)))}.
$$
Therefore, we have reasons to believe that the forest estimator in \eqref{final estimator} works well in this problem.

Let us verify Assumption \ref{assump1}-\ref{assump5}. Firstly, we choose $S=\R$ in Assumption \ref{assump1} and it is easy to check that the univariate function $\ell(\cdot,y)$ is convex for any $y\in S$. Secondly, we fix any  $y\in \S$. Then, the loss function $\ell(v,y)$ is also Lipschitz continuous w.r.t. the first variable $v$   with the Lipschitz constant $M_1(v,y):=\max\{\tau,1-\tau\}, \forall v,y,\in\R$. Thirdly, we choose the envelope function by $M_2(v,y):= \max\{\tau,1-\tau\}\cdot (|v|+|y|)$ in Assumption \ref{assump3}. Fourthly, we always suppose $Y$ is a sub-Gaussian random variable to meet the requirement in Assumption \ref{assump_distribution}. Finally, it remains to find the sufficient condition for Assumption \ref{assump5}.

In fact,   the Knight equality in \cite{knight1998limiting} tells us
         $$
            \rho_\tau(u-v)-\rho_\tau(u)=v(\mathbb{I}(u\le 0)-\tau)+\int_{0}^{v}{(\mathbb{I}(u\le s)-\mathbb{I}(u\le 0))ds},
         $$
    from which we get
    \begin{align*}
      R(h)-R(m) & = \E\left[ \rho_\tau(Y-h(X))-\rho_\tau(Y-m(X))\right] \\
       & = \E\left[ (h(X)-m(X))\mathbb{I}(Y-m(X)\le 0)-\tau\right] \\
       & + \E\left[ \int_{0}^{h(X)-m(X)}{ (\mathbb{I}(Y-m(X)\le s)-\mathbb{I}(Y-m(X)\le 0))ds}\right].
    \end{align*}
    Conditional on $X$, we know that the first part of  above inequality equals to zero by using the definition of $m(x)$. Therefore,
    \begin{align}
       R(h)-R(m) & =\E\left[ \int_{0}^{h(X)-m(X)}{ (\mathbb{I}(Y-m(X)\le s)-\mathbb{I}(Y-m(X)\le 0))ds}\right] \nonumber\\
       & =  \E\left[ \int_{0}^{h(X)-m(X)}{ sgn(s)\P[(Y-m(X))\in (0,s)\cup (s,0))|X]} ds\right] \label{g2fdhskjbfk}.
    \end{align}
   
    To illustrate our basic idea clearly, we simply consider a normal case, where $m_1(X):=\E(Y|X)$ is independent of the residual $\varepsilon=Y-\E(Y|X)\sim N(0,1)$ and $\sup_{x\in [0,1]^d}{|m_1(x)|}<\infty$. The generalization of this sub-Gaussian case can be finished by following the spirit. In this normal case, $m(X)$ is equal to the sum of $m_1(X)$ and the $\tau$-th quantile of $\varepsilon$.  Denote by $q_\tau(\varepsilon)\in \R$ the $\tau$-th quantile of $\varepsilon$. Thus, the conditional distribution of $(Y-m(X))|X$ is the same as the distribution of  $\varepsilon-q_\tau(\varepsilon)$. For any $s_0>0$,
    \begin{equation*}
      \int_{0}^{s_0}\P[(Y-m(X))\in (0,s)\cup (s,0))|X] ds =  \int_{0}^{s_0}\P[\varepsilon\in (q_\tau(\varepsilon),q_\tau(\varepsilon)+s)] ds.
    \end{equation*}
    Since $q_\tau(\varepsilon)$ is a fixed number, we assume $s_0$ is a large number later. By the Lagrange mean value theorem,  the following probability bound holds
    \begin{equation*}
      \frac{1}{\sqrt{2\pi}}\exp{(-(|q_\tau(\varepsilon)|+s_0)^2/2)}\cdot s\le\P[\varepsilon\in (q_\tau(\varepsilon),q_\tau(\varepsilon)+s)] \le  \frac{1}{\sqrt{2\pi}} \cdot s.
    \end{equation*}
    Then, 
   \begin{equation*}
       \frac{1}{\sqrt{2\pi}}\exp{(-(|q_\tau(\varepsilon)|+s_0)^2/2)}\cdot \frac{1}{2}s_0^2\le \int_{0}^{s_0}\P[(Y-m(X))\in (0,s)\cup (s,0))|X] ds \le \frac{1}{\sqrt{2\pi}} \cdot \frac{1}{2}s_0^2.
    \end{equation*}
    With the same argument, we also have
       \begin{equation*}
       \frac{1}{\sqrt{2\pi}}\exp{(-(|q_\tau(\varepsilon)|+|s_0|)^2/2)}\cdot \frac{1}{2}s_0^2\le \int_{0}^{s_0}\P[(Y-m(X))\in (0,s)\cup (s,0))|X] ds \le \frac{1}{\sqrt{2\pi}} \cdot \frac{1}{2}s_0^2
    \end{equation*}
    once $s_0<0$.  Therefore, \eqref{g2fdhskjbfk} implies \begin{equation}\label{djhb}
     R(h)-R(m)\le \frac{1}{\sqrt{2\pi}} \cdot \frac{1}{2} \E(h(X)-m(X))^2
    \end{equation}
    and
    \begin{equation}\label{dashjkd}
        R(h)-R(m)\ge \frac{1}{\sqrt{2\pi}}\exp{(-(|q_\tau(\varepsilon)|+\sup_{x\in [0,1]^d}|h(x)-m(x)|)^2/2)}\cdot \frac{1}{2}\E(h(X)-m(X))^2.
    \end{equation}
    Now, we choose $\beta_n\asymp\sqrt{\ln \ln n}$. The combination of \eqref{djhb} and \eqref{dashjkd} implies that the assumption \ref{assump5} holds with $\kappa=2$ and $c=(\ln n)^{-1}$. Meanwhile, those coefficients in general theoretical results are also of polynomial order of $\ln\ln n$, such as $\sqrt{\E( M_2^2(\beta_n,Y))}\asymp \ln\ln n$. The above setting results that the convergence rate of $\varepsilon(\hat{h}_n)$ is $O_p(n^{-\frac{1}{2}\cdot\frac{p}{p+d}})$ up to a polynomial of $\ln n$.

\subsection{Binary classification}
In previous sections, we give several examples of regression. Now, let us discuss another topic related to classification. In this section, we will show that Mondrian forests can be applied in binary classification as long as the chosen loss function is convex. In detail, we assume $Y\in\{1,-1\}$ takes only two labels and $X\in [0,1]^d$ is the explanation vector. It is well known that the Bayes classifier has the minimal classification error and takes the form:
$$ 
  C^{Bayes}(x)=\mathbb{I}(\eta(x)>0.5)-\mathbb{I}(\eta(x)\le 0.5),
$$  
where $\eta(x):=\P(Y=1|X=x)$. However, such a theoretical optimal classifier is not available due to the unknown $\eta(x)$.  In machine learning, the most commonly used loss function in this problem takes the form
$$
\ell(h(x),y):= \phi(-yh(x)),
$$
where $\phi:\R\to [0,\infty)$ is called a non-negative cost function and $h:\R\to\R$ is the goal function we need to learn from the data. The best $h$ is always chosen as the function that minimizes the empirical risk 
$$
\hat{R}(h):= \frac{1}{n}\sum_{i=1}^{n}{\phi(-Y_ih(X_i))}
$$
over a function class. If this minimizer is denoted by $h^*$,  the best classifier is regarded as
$$
{C^*}(x):= \mathbb{I}(h^*(x)>0)-\mathbb{I}(h^*(x)\le 0), x\in [0,1]^d.
$$
Here are some  examples of the loss function $\ell$.
\begin{enumerate}[(i)]

\item Square cost: $\phi_1(v)=(1+v)^2,v\in\R$ suggested in \cite{li2003loss}.

\item Hinge cost: $\phi_2(v)=\max\{1-v,0\}, v\in\R$ that is used in the support vector machine; see \cite{hearst1998support}.

\item Smoothing hinge cost. A problem with the hinge loss
is that direct optimization is difficult due to the discontinuity
in the derivative at $v = 1$. \cite{rennie2005loss} proposed a
smooth version of the Hinge:
\[
\phi_3(v)=\begin{cases}

0.5-v &v\le 0,\\

(1-v)^2/2 & 0<v\le 1,\\

0 & v\ge 1.
\end{cases}
\]

\item Modified square cost: $\phi_4(v)= \max\{1-v,0\}^2, v\in\R$ used in \cite{zhang2001text}. 

\item Logistic  cost:  $\phi_5(v)= \log_2(1+\exp(v)), v\in\R$ applied in  \cite{friedman2000special}.

\item Exponential cost: $\phi_6(v)= \exp(v), v\in\R$, which is used in the famous Adaboost; see \cite{freund1997decision}.

\end{enumerate}

It is obvious that $Y\in\{1,-1\}$ follows Assumption \ref{assump_distribution}. In order to verify Assumption \ref{assump1}-\ref{assump3}, we need to propose the following three conditions on $\phi$:

\begin{enumerate}
  \item[(a)]  $\phi:\R\to [0,\infty)$ is convex.
  \item[(b)]  $\phi(v)$ is piecewise differentiable and the absolute value of each piece $\phi'(v)$ is upper bounded by a polynomial of $|v|$.
  \item[(c)] $\phi(|v|)\le c_1|v|^\gamma+c_2$ for some $\gamma, c_1,c_2>0$.
\end{enumerate}

These conditions are satisfied by $\phi_1-\phi_5$ obviously. We will discuss the case of $\phi_6$ later due to its dramatic increase in speed. When Condition (a) holds, we have
\begin{align*}
  R(\lambda h_1+(1-\lambda)h_2) & = \E(-Y(\lambda h_1(X)+(1-\lambda)h_2(X)))  \\
   & \le \lambda R(h_1)+(1-\lambda) R(h_2)
\end{align*}
for any two functions $h_1,h_2$ and $\lambda\in (0,1)$. This shows that Assumption \ref{assump1} is true. With a slight abuse of notation, let 
$$
M_1(v,y):=\sup_{v_1\in [-v,v]}{|\phi'(v_1)|}
$$
be the maximal value of piecewise function $|\phi'(v)|$. Then, by Lagrange mean value theorem,
$$
\sup_{y\in\{1,-1\},v_1,v_2\in [-v,v]} |\ell(v_1,y)-\ell(v_2,y)|\le M_1(v,1)|v_1-v_2|.
$$
This shows that Assumption \ref{assump2} holds with the Lipschitz constant $M_1(v,1)$. Finally, Condition (c) ensures that Assumption \ref{assump3} holds with 
$$
M_2(v,y):= c_1|v|^\gamma+c_2.
$$
If we take $\beta_n\asymp\ln n$, all the coefficients in Theorem \ref{Th1}:
\begin{equation}\label{coeef2}
  \max\{\beta_n,\sqrt{\E(M_2^2(\beta_n, Y))}\}, \sup_{y\in [-\ln n,\ln n]}{M_1(C,y)}, \sup_{v\in [-\beta_n,\beta_n]}{|\ell(v,\ln n)|}
\end{equation}
diverges not faster than a polynomial of $\ln n$.  These arguments complete the verification of Assumption \ref{assump1}-\ref{assump3} for cost functions $\phi_1-\phi_5$. For the exponential cost $\phi_6$, some direct calculations imply that the assumption \ref{assump1}-\ref{assump3} also holds, and the coefficients in \eqref{coeef2} are $O(\ln n)$ if we take $\beta_n\asymp \ln\ln n$.

Generally speaking, the minimizer $m$ in \eqref{true} is not unique in this classification case; see Lemma 3 in \cite{lugosi2004bayes}. Therefore, it is meaningless to discuss the statistical consistency of forests as shown in Corollary \ref{Th2} or \ref{Corro_consistency}.  However, we can establish weak consistency for Mondrian forests as follows if the cost function is chosen to be $\phi_5$ or $\phi_6$.

\begin{proposition}\label{pro:cla}
  For cost function $\phi_5$ or $\phi_6$, the minimizer $m$ defined in \eqref{true} always exists. Meanwhile,
  \begin{equation*}
  \lim_{n\to\infty} {\E(R(\hat{h}_n))}= R(m).
  \end{equation*}
\end{proposition}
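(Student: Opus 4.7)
The plan has two parts: existence of $m$ and the limit $\E R(\hat h_n) \to R(m)$. For existence, I would use pointwise minimization of the conditional risk $\eta(x)\phi(-v) + (1-\eta(x))\phi(v)$, where $\eta(x) := \P(Y=1 \mid X=x)$. For both $\phi_5$ and $\phi_6$ this function is strictly convex in $v$; setting its derivative to zero yields explicit minimizers, namely $m(x) = \ln(\eta(x)/(1-\eta(x)))$ for $\phi_5$ and $m(x) = \tfrac{1}{2}\ln(\eta(x)/(1-\eta(x)))$ for $\phi_6$. These are measurable (finite on $\{\eta \in (0,1)\}$, and extended by $\pm\infty$ on $\{\eta \in \{0,1\}\}$, where the conditional-risk infimum $0$ is attained only in the limit), so $R(m) = \inf_g R(g)$ is well-defined.

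For the convergence, the lower bound $\E R(\hat h_n) \ge R(m)$ is immediate from the definition of $m$. For the matching upper bound I would, given any $\epsilon > 0$, construct a smooth $h_\epsilon \in \mathcal{H}^{p,\beta}([0,1]^d,C)$ with $R(h_\epsilon) \le R(m) + \epsilon$ and then apply Theorem \ref{Th1} with $h=h_\epsilon$. The construction proceeds in two stages. First, truncate $m$ to $m_N(x) := (m(x)\wedge N)\vee(-N)$; dominated convergence applied to the conditional risk (uniformly dominated by the conditional risk at $v=0$, which is integrable) yields $R(m_N)\to R(m)$, so I can pick $N$ with $R(m_N) \le R(m) + \epsilon/2$. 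Second, approximate the bounded measurable $m_N$ in $L^1(\mu_X)$ by a smooth $h_\epsilon$ with $|h_\epsilon|\le N$, via a Lusin-type continuous approximation followed by mollification (both preserve boundedness). Since $|\phi_5'|\le 1/\ln 2$ globally and $|\phi_6'|\le e^N$ on $[-N,N]$, a Lipschitz estimate gives $|R(h_\epsilon) - R(m_N)| \le L_N \cdot \E|h_\epsilon(X) - m_N(X)|$, which can be driven below $\epsilon/2$ by the $L^1$-approximation.

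With $h_\epsilon$ in hand, I would invoke Theorem \ref{Th1}: taking $\beta_n \asymp \ln n$ for $\phi_5$ and $\beta_n\asymp \ln\ln n$ for $\phi_6$, the coefficients $M_1$, $M_2$ and $\sup_{v\in[-\beta_n,\beta_n]}|\ell(v,\ln n)|$ all grow at most polylogarithmically in $n$ (as verified earlier in this section). Choosing $\lambda_n\to\infty$ slowly enough (say $\lambda_n = o(n^{1/(2d)}/\mathrm{polylog})$) makes the generalization error, the approximation error, and $Res(n)$ all vanish, hence $\varlimsup_n \E R(\hat h_n) \le R(h_\epsilon) \le R(m)+\epsilon$. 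Letting $\epsilon \downarrow 0$ yields the claim.

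The hard part is the exponential case: the local Lipschitz constant $L_N = e^N$ blows up with the truncation level, so the choices must be carefully ordered---first fix $N$ so $R(m_N)$ is close enough to $R(m)$, then choose the $L^1$-accuracy of $h_\epsilon$ small enough to compensate for the large prefactor $e^N$. This is not a fundamental obstacle (for each fixed $N$, arbitrary $L^1$-accuracy by smooth functions is available and $\mathcal{H}^{p,\beta}$ is rich enough), but it is exactly why the earlier analysis of $\phi_6$ takes $\beta_n\asymp\ln\ln n$ rather than $\ln n$, and the same polylog bookkeeping underlies the present argument.
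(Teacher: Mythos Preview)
Your proposal is correct and follows the same overall strategy as the paper: establish that $R(m)=\inf_g R(g)$ via pointwise minimization of the conditional risk, then for any $\varepsilon>0$ produce a smooth $h_\varepsilon\in\mathcal H^{p,\beta}([0,1]^d,C_\varepsilon)$ with $R(h_\varepsilon)\le R(m)+\varepsilon$, and finally invoke Theorem~\ref{Th1} (equivalently Remark~\ref{remark3}) with the $\beta_n$-choices already verified for $\phi_5,\phi_6$ to get $\varlimsup_n \E R(\hat h_n)\le R(h_\varepsilon)$. For the logistic cost $\phi_5$ the two arguments are essentially identical, both resting on the global bound $|\phi_5'|\le 1/\ln 2$.

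The genuine difference is in how the exponential cost $\phi_6$ is handled, where no global Lipschitz constant is available. You truncate the pointwise minimizer to $m_N=(m\wedge N)\vee(-N)$ first, observe that convexity of the conditional risk makes it monotone between $0$ and $m(x)$ so that the risk at $m_N(x)$ is dominated by $\phi(0)=1$, obtain $R(m_N)\to R(m)$ by dominated convergence, and only then mollify, paying the local Lipschitz constant $e^N$ which the subsequent $L^1$-approximation absorbs. The paper instead mollifies directly and controls $R(m_\eta)-R(m_*)$ by expanding $e^{-Yh}$ as a power series, truncating the series, and bounding each term via $|m_*^k-m_\eta^k|\le k|m_*-m_\eta|$. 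Your route is more elementary and makes the order of choices (first $N$, then the mollification scale) completely transparent; the paper's series argument avoids the explicit truncation step but requires juggling the tail of the Taylor expansion. Both deliver the key approximation $R(h_\varepsilon)\le R(m)+\varepsilon$, after which the proofs coincide.
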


\subsection{Nonparametric density estimation}
\label{densityestimation}

Assume $X$ is a continuous random vector
defined on $[0,1]^d$ and has a density function $f_0(x), x\in [0,1]^d$. Our interest lies in the estimation of
the unknown function $f_0(x)$ based on an i.i.d. sample of $X$, namely data $\D_n=\{X_i\}_{i=1}^n$. Note that any density estimator  has to satisfy two shape requirements that $f_0$ is non-negative, namely, $f_0(x)\ge 0, x\in [0,1]^d$ and
$\int f_0(x) dx = 1$. These two restrictions can be relaxed by transforming. We have the decomposition
$$
 f_0(x)=\frac{\exp(h_0(x))}{\int \exp(h_0(x)) dx}, x\in [0,1]^d,
$$
where $h_0(x)$ is a real function on $[0,1]^d$. The above relationship helps us focus on estimating $h_0(x)$ only, which will be a statistical learning problem without constraint.   On the other hand, this transformation introduces a model identification problem since $h_0 + c$ and $h_0$ give the same density function, where $c\in\R$. To solve this problem, we impose an additional requirement $\int_{[0,1]^d}{h_0(x)}=0$, which guarantees a one-to-one map between
$f_0$ and $h_0$.

In the case of density estimation, the scaled log-likelihood for any function $h(x)$ based on the sampled data $\D_n$ is
$$
  \hat{R}(h):= \frac{1}{n}\left(-\sum_{i=1}^n{h(X_i)}+ \ln \int_{[0,1]^d}{\exp(h(x))dx}\right)
$$
and its population version is 
$$
 R(h)= -\E(h(X))+\ln \int_{[0,1]^d}{\exp{(h(x))}dx}.
$$

With a slight modification, Mondrian forests can also be applied to this problem. Recall the partition $\{\C_{b,\lambda,j}\}_{j=1}^{K_b(\lambda)}$ of the $b$-th Mondrian with stopping time $\lambda$ satisfies
$$
[0,1]^d=\bigcup_{j=1}^{K_b(\lambda)} \C_{b,\lambda,j}\ \ \text{and}\ \ \C_{b,\lambda,j_1}\cap \C_{b,\lambda,j_2}= \varnothing,\ \forall j_1\neq j_2.
$$
For each cell $\C_{b,\lambda,j}$, a constant $\hat{c}_{b,\lambda,j}\in\R$ is used as the predictor of $h(x)$ in this small region. Thus, the estimator of $\eta_0(x)$ based on a single tree has the form
$$
 \hat{h}_{b,n}^{pre}(x)= \sum_{j=1}^{K_b(\lambda)}{\hat{c}_{b,\lambda,j}\cdot \I(x\in\C_{b,\lambda,j})},
$$
where coefficients are obtained by minimizing the empirical risk function,
\begin{align*}
    (\hat{c}_{b,\lambda,1},\ldots, \hat{c}_{b,\lambda,K_b(\lambda)}):= & \arg min_{\substack{c_{b,\lambda,j}\in [-\beta_n,\beta_n]\\j=1,\ldots,K_b(\lambda)}} \frac{1}{n}\sum_{j=1}^{K_b(\lambda)}\sum_{i=1}^n {-c_{b,\lambda,j}\cdot \mathbb{I}(X_i\in \C_{b,\lambda,j} )}\\
    &+\ln \sum_{j=1}^{K_b(\lambda)}\int_{\C_{b,\lambda,j}}{\exp{(c_{b,\lambda,j})}dx}.
\end{align*}
Since the optimized function above is differentiable with respect to parameters $c_{b,\lambda,j}$s, it is not difficult to show that the corresponding minimum can indeed be achieved. To meet the requirement of our restriction, the estimator based on a single tree is revised by
$$
 \hat{h}_{b,n}(x)=\hat{h}_{b,n}^{pre}(x)- \int_{[0,1]^d}{\hat{h}_{b,n}^{pre}(x) dx}.
$$
Finally, by applying the ensemble technique again, the estimator of $h_0$ based on the Mondrian forest is 
\begin{equation}\label{forest_density_estimator}
    \hat{h}_n(x):= \frac{1}{B}\sum_{b=1}^B \hat{h}_{b,n}(x), x\in [0,1]^d.
\end{equation}

Next, we analyze the theoretical properties of $\hat{h}_n$. The only difference between  \eqref{forest_density_estimator} and previous estimators is that \eqref{forest_density_estimator} is obtained by using an additional penalty, namely
$$
Pen(h):=\ln \int_{[0,1]^d}{\exp{(h(x))}dx},
$$
where $h: [0,1]^d\to\R$.  Our theoretical analysis will be revised as follows. Let the pseudo loss function be $\ell^{pse}(v,y):=-v$ with $ v\ge 0, y\in \R$. It is obvious that Assumption \ref{assump1} holds for $\ell^{pse}(v,y)$.  Assumption \ref{assump2} is satisfied with $M_1(v,y)=1$ and Assumption \ref{assump3} is satisfied with $M_2(v,y)=v$. After choosing $\beta_n\asymp \ln\ln n$ and following similar arguments in Theorem \ref{Th1}, we have
\begin{align}
         \E R(\hat{h}_n)-R(h)&\le c_1\frac{\ln n}{\sqrt{n}}(1+\lambda_n)^d+2d^{\frac{3}{2}p}C\cdot \frac{1}{\lambda_n^p}\nonumber\\
         &+\frac{C}{\sqrt{n}}+\E_{\lambda_n}|Pen(h)-Pen(h_n^*(x))|,\label{dsghbkASHBJ}
\end{align}
where $h\in \mathcal{H}^{p,\beta}([0,1]^d,C)$ ($0<p\le 1$) and $h_n^*(x):= \sum_{j=1}^{K_1(\lambda_n)}\I(x\in \C_{1,\lambda_n,j})h(x_{1,\lambda_n,j})$ ($x_{1,\lambda_n,j}$  is the center of cell $\C_{1,\lambda_n,j}$) and $c_1$ is the coefficient in Theorem \ref{Th1} . Therefore, it remains  to bound $\E_{\lambda_n}|Pen(h)-Pen(h_n^*(x))|$.

\begin{lemma}\label{Density_lemma1}
    For any $h(x)\in \mathcal{H}^{p,\beta}([0,1]^d,C)$ with $0<p\le 1$ and $h_n^*(x)$,
    $$
\E_{\lambda_n}|Pen(h)-Pen(h_n^*(x))|\le \exp(2C)\cdot 2^p d^{\frac{3}{2}p}\cdot \left(\frac{1}{\lambda_n}\right)^p.
    $$
\end{lemma}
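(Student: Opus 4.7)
The plan is to reduce the bound on $|Pen(h) - Pen(h_n^*)|$ to a pointwise control of $|h(x) - h_n^*(x)|$ through two Lipschitz arguments, and then to control the latter via the Hölder regularity of $h$ together with standard diameter estimates for Mondrian cells.

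First, since $h \in \mathcal{H}^{p,\beta}([0,1]^d,C)$ satisfies $\sup|h| \le C$, and $h_n^*$ is piecewise constant taking the value $h(x_{1,\lambda_n,j})$ on each cell, we also have $\sup|h_n^*|\le C$. Hence both $\int_{[0,1]^d}\exp(h(x))\,dx$ and $\int_{[0,1]^d}\exp(h_n^*(x))\,dx$ lie in $[e^{-C}, e^{C}]$. The mean value theorem applied to $\ln$ yields
$$
|Pen(h) - Pen(h_n^*)| \le e^{C}\int_{[0,1]^d} \bigl|\exp(h(x)) - \exp(h_n^*(x))\bigr|\,dx,
$$
and the Lipschitz estimate $|\exp(a) - \exp(b)| \le e^{C}|a-b|$ for $a,b \in [-C,C]$ reduces everything to controlling the $L^{1}$ distance between $h$ and $h_n^*$, at the cost of a prefactor $e^{2C}$.

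Next I invoke the Hölder property (with $s=0$, $\beta=p$): for $x$ in the cell $\mathcal{C}_{1,\lambda_n,j}$ with center $x_{1,\lambda_n,j}$,
$$
|h(x) - h_n^*(x)| = |h(x) - h(x_{1,\lambda_n,j})| \le C\,\|x - x_{1,\lambda_n,j}\|_2^{\,p}.
$$
Bounding the Euclidean distance from $x$ to the center by the cell diameter, and the diameter by $\sqrt{d}$ times the total side length $|\mathcal{C}(x)| := \sum_{k=1}^{d} L_{k}(x)$, one obtains $\|x - x_{1,\lambda_n,j}\|_2^{\,p} \le d^{p/2}\,|\mathcal{C}(x)|^{p}$. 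Taking expectation over the Mondrian process and using Jensen's inequality for the concave map $t\mapsto t^{p}$ (valid since $0 < p \le 1$), together with the standard Mondrian estimate $\E_{\lambda_n}[L_{k}(x)] \le 2/\lambda_n$ available from \cite{mourtada2020minimax}, we get
$$
\E_{\lambda_n}\bigl[|\mathcal{C}(x)|^{p}\bigr] \le \bigl(\E_{\lambda_n}|\mathcal{C}(x)|\bigr)^{p} \le (2d/\lambda_n)^{p}.
$$
Integrating in $x \in [0,1]^{d}$ and combining the two Lipschitz factors with this diameter estimate assembles the desired bound.

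The main obstacle is bookkeeping rather than a new idea: the diameter-to-side-length comparison, the direction of Jensen's inequality, and the Mondrian expected side-length bound must compose precisely to yield the factor $2^{p}d^{3p/2}\lambda_n^{-p}$ (the Hölder constant $C$ being absorbed into the exponential prefactor $e^{2C}$). A secondary subtlety is that the estimate $\E_{\lambda_n}[L_k(x)] \le 2/\lambda_n$ on $[0,1]^d$ is inherited by domination from its analogue on $\mathbb{R}^{d}$, so one should invoke \cite{mourtada2020minimax} directly rather than re-derive it from Algorithm~\ref{alg:2}.
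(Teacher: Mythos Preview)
Your argument is correct and follows essentially the same route as the paper's proof. The only cosmetic difference is in how the reduction $|Pen(h)-Pen(h_n^*)|\le e^{2C}\int_{[0,1]^d}|h-h_n^*|\,dx$ is obtained: you apply the mean value theorem directly to $\ln$ and then to $\exp$, whereas the paper interpolates along the path $\alpha\mapsto Pen((1-\alpha)h+\alpha h_n^*)$ and differentiates, but both yield the same inequality with the same prefactor. For the Mondrian estimate, you pass through expected side lengths while the paper quotes the diameter bound $\E_{\lambda_n}D_{\lambda_n}(x)\le 2d^{3/2}/\lambda_n$ from \cite{mourtada2020minimax} directly; the two produce identical constants. (Your remark that the H\"older constant $C$ is ``absorbed'' into $e^{2C}$ is not literally justified, but the paper's own proof silently drops the same factor, so the stated lemma is off by exactly this $C$ in either derivation.)
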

Choosing $\lambda_n= n^{\frac{1}{2(p+d)}}$, the combination of \eqref{dsghbkASHBJ} and Lemma \ref{Density_lemma1} implies the regret function bound as follows:
$$
 \E R(\hat{h}_n)-R(h)\le \left(c_1\ln\ln n+3d^{\frac{3}{2}p}C+\exp(2C)\cdot 2^p d^{\frac{3}{2}p}\right)\cdot \left(\frac{1}{n}\right)^{\frac{1}{2}\cdot\frac{p}{p+d}},
$$
where $n$ is sufficiently large. 

Let $\|\cdot\|_\infty$ be the supremum norm of a function. To obtain the consistency rate of our density estimator, we need to change Assumption \ref{assump5} by the fact below.

\begin{lemma}\label{Density_lemma2}
    Suppose the true density $f_0(x)$ is bounded away from zero and infinity, namely $c_0<h_0(x)<c_0^{-1}, \forall x\in [0,1]^d$ and $\beta_n\asymp \ln\ln n$.  For any function $h:[0,1]^d\to \R$ with $\|h\|_\infty\le \beta_n$ and $\int_{[0,1]^d}{h(x)dx}=0$, we have
    $$
     c_0\cdot\frac{1}{\ln n}\cdot \E(h(X)-h_0(X))^2\le R(h)-R(h_0)\le c_0^{-1}\cdot\ln n \cdot \E(h(X)-h_0(X))^2.
    $$
\end{lemma}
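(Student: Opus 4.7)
The plan is to start from the identity
\begin{equation*}
R(h) - R(h_0) = -\E_{f_0}[v] + \ln \E_{f_0}[\exp(v)], \qquad v := h - h_0,
\end{equation*}
which is simply the Kullback--Leibler divergence $D_{\mathrm{KL}}(f_0 \,\|\, f_h)$ with $f_h(x) \propto \exp(h(x))$. Setting $F(\alpha) := R(h_0 + \alpha v)$, a direct computation gives $F'(0) = 0$ (because $h_0$ minimizes $R$) and $F''(\alpha) = \mathrm{Var}_{g_\alpha}(v)$, where $g_\alpha := h_0 + \alpha v$ and the variance is under the density $f_{g_\alpha}(x) \propto \exp(g_\alpha(x))$. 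Taylor's theorem with integral remainder then yields
\begin{equation*}
R(h) - R(h_0) = \int_0^1 (1 - \alpha)\,\mathrm{Var}_{g_\alpha}(v)\,d\alpha,
\end{equation*}
so the proof reduces to sandwiching this integral between multiples of $\E(h(X) - h_0(X))^2$.

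Since $|g_\alpha| \le \max(\|h\|_\infty, \|h_0\|_\infty) = O(\beta_n)$, a short calculation bounds $\|f_{g_\alpha}/f_0\|_\infty \le \exp(C \beta_n)$ and symmetrically $\inf (f_{g_\alpha}/f_0) \ge \exp(-C \beta_n)$ for a constant $C$ depending only on $c_0$. The upper bound is then immediate:
\begin{equation*}
\mathrm{Var}_{g_\alpha}(v) \le \E_{g_\alpha}[v^2] \le \exp(C \beta_n)\,\E_{f_0}[v^2],
\end{equation*}
and integrating against $(1-\alpha)$ together with $\beta_n \asymp \ln\ln n$ forces $\exp(C\beta_n) \le c_0^{-1} \ln n$.

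The lower bound is the real crux. The key step is a Poincar\'e-type inequality on the constrained subspace: \emph{if $\int_{[0,1]^d} v\,dx = 0$ and $f_0 \ge c_0$ pointwise, then $\mathrm{Var}_{f_0}(v) \ge c_0\,\E_{f_0}[v^2]$}. To prove this, use $\int v\,dx = 0$ to rewrite $\E_{f_0}[v] = \int v\,(f_0 - c_0)\,dx$, and apply Cauchy--Schwarz with the nonnegative weight $f_0 - c_0$ (of total mass $1 - c_0$):
\begin{equation*}
\bigl(\E_{f_0}[v]\bigr)^2 \le (1 - c_0) \int v^2 (f_0 - c_0)\,dx \le (1 - c_0)\,\E_{f_0}[v^2],
\end{equation*}
which subtracted from $\E_{f_0}[v^2]$ gives the claim. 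Next I transfer to $\mathrm{Var}_{g_\alpha}(v)$ by observing that for every $c \in \R$, $\int (v - c)^2 f_{g_\alpha}\,dx \ge \exp(-C\beta_n) \int (v - c)^2 f_0\,dx$; minimizing both sides in $c$ gives $\mathrm{Var}_{g_\alpha}(v) \ge \exp(-C\beta_n)\, \mathrm{Var}_{f_0}(v) \ge \exp(-C\beta_n)\,c_0\,\E_{f_0}[v^2]$. Integrating then produces the $c_0/\ln n$ lower bound once $\beta_n$ is fixed so that $\exp(-C\beta_n) \ge 1/\ln n$.

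The main obstacle is precisely the Poincar\'e-type inequality above: a na\"ive Cauchy--Schwarz only returns the trivial $\mathrm{Var}_{f_0}(v) \ge 0$, and one genuinely has to couple the constraint $\int v\,dx = 0$ with the pointwise floor $f_0 \ge c_0$ through the weight $f_0 - c_0$. With that step in hand, the rest is careful bookkeeping of the $\exp(\pm C\beta_n)$ factors and pinning down the constant multiplier in $\beta_n \asymp \ln\ln n$ so that the two sides emerge as the stated $c_0/\ln n$ and $c_0^{-1}\ln n$ factors times $\E(h(X)-h_0(X))^2$.
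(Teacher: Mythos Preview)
Your proposal is correct and follows the same overall skeleton as the paper: compute $F''(\alpha)=\mathrm{Var}_{g_\alpha}(v)$, use a Taylor expansion (you take the integral-remainder form, the paper uses the Lagrange form), and sandwich the variance by density--ratio bounds of size $\exp(\pm C\beta_n)$.

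The one genuine difference is in the lower bound. You pivot through $\mathrm{Var}_{f_0}(v)$ and therefore need your Poincar\'e-type inequality $\mathrm{Var}_{f_0}(v)\ge c_0\,\E_{f_0}[v^2]$, which you prove via the weighted Cauchy--Schwarz with weight $f_0-c_0$. The paper instead pivots through the \emph{uniform} distribution $U$ on $[0,1]^d$: since $\int v\,dx=0$ is exactly $\E_U[v]=0$, one gets $\mathrm{Var}_U(v)=\E_U[v^2]$ for free, then $\mathrm{Var}_{g_\alpha}(v)\ge e^{-2\beta_n}\mathrm{Var}_U(v)=e^{-2\beta_n}\E_U[v^2]$ and finally $\E_U[v^2]\asymp \E_{f_0}[v^2]$ by the pointwise bounds on $f_0$. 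This bypasses the Poincar\'e step entirely and is the cleaner route; your argument is correct but does more work than needed, since the zero-integral constraint is \emph{literally} a zero-mean statement under Lebesgue measure rather than under $f_0$.
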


Then Lemma \ref{Density_lemma2} immediately implies the following consistency result. 
\begin{proposition}

     Suppose the true density $f_0(x)$ is bounded away from zero and infinity, namely $c_0<h_0(x)<c_0^{-1},\ \forall x\in [0,1]^d$. If  $\lambda_n= n^{\frac{1}{2(p+d)}}$ and the true function  $h_0\in \mathcal{H}^{p,\beta}([0,1]^d,C)$ with $0<p\le 1$ satisfying $\int_{[0,1]^d}{h_0(x)dx}=0$, there is $n_{den}\in\mathbb{Z}^+$ such that for $n>n_{den}$,
    $$
  \E(\hat{h}_n(X)-h_0(X))^2\le c_0^{-1}\cdot\ln n \cdot \left(c_1\ln\ln n+3d^{\frac{3}{2}p}C+\exp(2C)\cdot 2^p d^{\frac{3}{2}p}\right)\cdot \left(\frac{1}{n}\right)^{\frac{1}{2}\cdot\frac{p}{p+d}}.
    $$
    
\end{proposition}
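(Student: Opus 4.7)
The plan is to combine the regret bound displayed just above Lemma \ref{Density_lemma2} with the quadratic equivalence stated in Lemma \ref{Density_lemma2}. Specifically, the excerpt already established that for $h \in \mathcal{H}^{p,\beta}([0,1]^d,C)$ with $0 < p \le 1$, the choice $\lambda_n = n^{1/(2(p+d))}$ yields
\begin{equation*}
    \E R(\hat h_n) - R(h) \le \bigl(c_1 \ln\ln n + 3 d^{\frac{3}{2}p} C + \exp(2C)\cdot 2^p d^{\frac{3}{2}p}\bigr)\cdot \Bigl(\tfrac{1}{n}\Bigr)^{\frac{1}{2}\cdot\frac{p}{p+d}}.
\end{equation*}
Under the hypothesis $h_0 \in \mathcal{H}^{p,\beta}([0,1]^d,C)$, I would specialize this by taking $h = h_0$ to obtain a control on $\E R(\hat h_n) - R(h_0)$ of exactly the announced order (up to the $c_0^{-1}\ln n$ prefactor still to be absorbed).

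Next, I would verify that the estimator $\hat h_n$ satisfies the hypotheses of Lemma \ref{Density_lemma2}. By construction, each single-tree estimator $\hat h_{b,n}$ is the centered version of a piecewise-constant function with coefficients in $[-\beta_n,\beta_n]$, so $\|\hat h_{b,n}\|_\infty \le 2\beta_n$ and $\int_{[0,1]^d} \hat h_{b,n}(x)\,dx = 0$. Averaging over $b$ preserves both properties, so $\|\hat h_n\|_\infty \le 2\beta_n \asymp \ln\ln n$ (which, for large $n$, still fits within a bound of the form $\beta_n'\asymp\ln\ln n$ compatible with the lemma) and $\int \hat h_n(x)\,dx = 0$. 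Then Lemma \ref{Density_lemma2}, applied conditionally on the Mondrian randomness and the sample, yields
\begin{equation*}
     c_0 \cdot \tfrac{1}{\ln n}\cdot \E_X\bigl(\hat h_n(X) - h_0(X)\bigr)^2 \le R(\hat h_n) - R(h_0).
\end{equation*}

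Finally, I would take expectation with respect to the remaining randomness (the sample $\D_n$ and the $B$ Mondrian processes), invoking Fubini to commute the two expectations, and rearrange to obtain
\begin{equation*}
    \E\bigl(\hat h_n(X) - h_0(X)\bigr)^2 \le c_0^{-1}\cdot \ln n \cdot \bigl(\E R(\hat h_n) - R(h_0)\bigr),
\end{equation*}
which combined with the regret bound above gives the claimed inequality for all $n$ larger than some $n_{den}$ chosen to ensure both the regret estimate and the $\beta_n$-boundedness hypothesis of Lemma \ref{Density_lemma2} are in force. The only delicate point, and the main obstacle, is the verification that the $L^\infty$-bound on $\hat h_n$ matches the regime $\beta_n \asymp \ln\ln n$ required by Lemma \ref{Density_lemma2}: the centering step introduces an extra factor that must be absorbed, and one must check that this does not spoil the constants hidden in the lemma's $c_0^{-1}\ln n$ prefactor. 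Everything else is a direct chaining of results already proven in the excerpt.
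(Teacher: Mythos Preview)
Your proposal is correct and follows essentially the same approach as the paper, which simply states that Lemma \ref{Density_lemma2} immediately implies the proposition once the regret bound with $h=h_0$ is in hand. Your additional care in checking the zero-integral and $L^\infty$ hypotheses for $\hat h_n$ is appropriate filling-in of details the paper leaves implicit.
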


\section{Conclusion}
\label{sec:conc}
In this paper, we proposed a general framework for Mondrian forests, which can be used in many statistical or machine-learning problems. These applications include but are not limited to LSE, generalized regression, density estimation, quantile regression, and binary classification. Meanwhile, we studied the upper bound of its regret/risk function and statistical consistency and showed how to use them in the specific applications listed above. The future work can study the asymptotic distribution of this kind of general Mondrian forests as suggested by \cite{cattaneo2023inference}.

\bibliography{ref}

\begin{thebibliography}{36}
\providecommand{\natexlab}[1]{#1}
\providecommand{\url}[1]{\texttt{#1}}
\expandafter\ifx\csname urlstyle\endcsname\relax
  \providecommand{\doi}[1]{doi: #1}\else
  \providecommand{\doi}{doi: \begingroup \urlstyle{rm}\Url}\fi

\bibitem[Adams and Fournier(2003)]{adams2003sobolev}
Robert~A Adams and John~JF Fournier.
\newblock \emph{Sobolev spaces}.
\newblock Elsevier, 2003.

\bibitem[Arlot and Genuer(2014)]{arlot2014analysis}
Sylvain Arlot and Robin Genuer.
\newblock Analysis of purely random forests bias.
\newblock \emph{arXiv preprint arXiv:1407.3939}, 2014.

\bibitem[Athey et~al.(2019)Athey, Tibshirani, and Wager]{athey2019generalized}
Susan Athey, Julie Tibshirani, and Stefan Wager.
\newblock Generalized random forests.
\newblock \emph{The Annals of Statistics}, 47\penalty0 (2):\penalty0 1148--1178, 2019.

\bibitem[Baptista et~al.(2024)Baptista, O'Reilly, and Xie]{baptista2024trim}
Ricardo Baptista, Eliza O'Reilly, and Yangxinyu Xie.
\newblock Trim: Transformed iterative mondrian forests for gradient-based dimension reduction and high-dimensional regression.
\newblock \emph{arXiv preprint arXiv:2407.09964}, 2024.

\bibitem[Biau(2012)]{biau2012analysis}
G{\'e}rard Biau.
\newblock Analysis of a random forests model.
\newblock \emph{The Journal of Machine Learning Research}, 13\penalty0 (1):\penalty0 1063--1095, 2012.

\bibitem[Biau et~al.(2008)Biau, Devroye, and Lugosi]{biau2008consistency}
G{\'e}rard Biau, Luc Devroye, and G{\"a}bor Lugosi.
\newblock Consistency of random forests and other averaging classifiers.
\newblock \emph{Journal of Machine Learning Research}, 9\penalty0 (9), 2008.

\bibitem[Blumer et~al.(1989)Blumer, Ehrenfeucht, Haussler, and Warmuth]{blumer1989learnability}
Anselm Blumer, Andrzej Ehrenfeucht, David Haussler, and Manfred~K Warmuth.
\newblock Learnability and the vapnik-chervonenkis dimension.
\newblock \emph{Journal of the ACM (JACM)}, 36\penalty0 (4):\penalty0 929--965, 1989.

\bibitem[Breiman(2001)]{breiman2001random}
Leo Breiman.
\newblock Random forests.
\newblock \emph{Machine learning}, 45\penalty0 (1):\penalty0 5--32, 2001.

\bibitem[Cattaneo et~al.(2023)Cattaneo, Klusowski, and Underwood]{cattaneo2023inference}
Matias~D Cattaneo, Jason~M Klusowski, and William~G Underwood.
\newblock Inference with mondrian random forests.
\newblock \emph{arXiv preprint arXiv:2310.09702}, 2023.

\bibitem[Freund and Schapire(1997)]{freund1997decision}
Yoav Freund and Robert~E Schapire.
\newblock A decision-theoretic generalization of on-line learning and an application to boosting.
\newblock \emph{Journal of computer and system sciences}, 55\penalty0 (1):\penalty0 119--139, 1997.

\bibitem[Friedman et~al.(2000)Friedman, Hastie, and Tibshirani]{friedman2000special}
Jerome Friedman, Trevor Hastie, and Robert Tibshirani.
\newblock Special invited paper. additive logistic regression: A statistical view of boosting.
\newblock \emph{Annals of statistics}, pages 337--374, 2000.

\bibitem[Gy{\"o}rfi et~al.(2002)Gy{\"o}rfi, Kohler, Krzy{\.z}ak, and Walk]{gyorfi2002distribution}
L{\'a}szl{\'o} Gy{\"o}rfi, Michael Kohler, Adam Krzy{\.z}ak, and Harro Walk.
\newblock \emph{A distribution-free theory of nonparametric regression}, volume~1.
\newblock Springer, 2002.

\bibitem[Hearst et~al.(1998)Hearst, Dumais, Osuna, Platt, and Scholkopf]{hearst1998support}
Marti~A. Hearst, Susan~T Dumais, Edgar Osuna, John Platt, and Bernhard Scholkopf.
\newblock Support vector machines.
\newblock \emph{IEEE Intelligent Systems and their applications}, 13\penalty0 (4):\penalty0 18--28, 1998.

\bibitem[Huang(1998)]{huang1998functional}
Jianhua~Z Huang.
\newblock Functional anova models for generalized regression.
\newblock \emph{Journal of multivariate analysis}, 67\penalty0 (1):\penalty0 49--71, 1998.

\bibitem[Huber(1992)]{huber1992robust}
Peter~J Huber.
\newblock Robust estimation of a location parameter.
\newblock In \emph{Breakthroughs in statistics: Methodology and distribution}, pages 492--518. Springer, 1992.

\bibitem[Klusowski(2021)]{klusowski2021universal}
Jason~M Klusowski.
\newblock Universal consistency of decision trees in high dimensions.
\newblock \emph{arXiv preprint arXiv:2104.13881}, 2021.

\bibitem[Knight(1998)]{knight1998limiting}
Keith Knight.
\newblock Limiting distributions for l1 regression estimators under general conditions.
\newblock \emph{Annals of statistics}, pages 755--770, 1998.

\bibitem[Kohler and Langer(2021)]{kohler2021rate}
Michael Kohler and Sophie Langer.
\newblock On the rate of convergence of fully connected deep neural network regression estimates.
\newblock \emph{The Annals of Statistics}, 49\penalty0 (4):\penalty0 2231--2249, 2021.

\bibitem[Kosorok(2008)]{kosorok2008introduction}
Michael~R Kosorok.
\newblock \emph{Introduction to empirical processes and semiparametric inference.}
\newblock Springer, 2008.

\bibitem[Lakshminarayanan et~al.(2014)Lakshminarayanan, Roy, and Teh]{lakshminarayanan2014mondrian}
Balaji Lakshminarayanan, Daniel~M Roy, and Yee~Whye Teh.
\newblock Mondrian forests: Efficient online random forests.
\newblock \emph{Advances in neural information processing systems}, 27, 2014.

\bibitem[Li and Yang(2003)]{li2003loss}
Fan Li and Yiming Yang.
\newblock A loss function analysis for classification methods in text categorization.
\newblock In \emph{Proceedings of the 20th international conference on machine learning (ICML-03)}, pages 472--479, 2003.

\bibitem[Liaw et~al.(2002)Liaw, Wiener, et~al.]{liaw2002classification}
Andy Liaw, Matthew Wiener, et~al.
\newblock Classification and regression by randomforest.
\newblock \emph{R news}, 2\penalty0 (3):\penalty0 18--22, 2002.

\bibitem[Lugosi and Vayatis(2004)]{lugosi2004bayes}
G{\'a}bor Lugosi and Nicolas Vayatis.
\newblock On the bayes-risk consistency of regularized boosting methods.
\newblock \emph{The Annals of statistics}, 32\penalty0 (1):\penalty0 30--55, 2004.

\bibitem[Mourtada et~al.(2020)Mourtada, Ga{\"i}ffas, and Scornet]{mourtada2020minimax}
Jaouad Mourtada, St{\'e}phane Ga{\"i}ffas, and Erwan Scornet.
\newblock {Minimax optimal rates for Mondrian trees and forests}.
\newblock \emph{The Annals of Statistics}, 48\penalty0 (4):\penalty0 2253 -- 2276, 2020.

\bibitem[Mourtada et~al.(2021)Mourtada, Ga{\"\i}ffas, and Scornet]{mourtada2021amf}
Jaouad Mourtada, St{\'e}phane Ga{\"\i}ffas, and Erwan Scornet.
\newblock Amf: Aggregated mondrian forests for online learning.
\newblock \emph{Journal of the Royal Statistical Society Series B: Statistical Methodology}, 83\penalty0 (3):\penalty0 505--533, 2021.

\bibitem[Rennie and Srebro(2005)]{rennie2005loss}
Jason~DM Rennie and Nathan Srebro.
\newblock Loss functions for preference levels: Regression with discrete ordered labels.
\newblock In \emph{Proceedings of the IJCAI multidisciplinary workshop on advances in preference handling}, volume~1. AAAI Press, Menlo Park, CA, 2005.

\bibitem[Roy and Teh(2008)]{roy2008mondrian}
Daniel~M Roy and Yee~W Teh.
\newblock The mondrian process.
\newblock In \emph{Advances in neural information processing systems}, pages 1377--1384, 2008.

\bibitem[Roy(2011)]{roy2011computability}
Daniel~Murphy Roy.
\newblock \emph{Computability, inference and modeling in probabilistic programming}.
\newblock PhD thesis, Massachusetts Institute of Technology, 2011.

\bibitem[Schmidt-Hieber(2020)]{schmidt2020nonparametric}
Johannes Schmidt-Hieber.
\newblock Nonparametric regression using deep neural networks with relu activation function.
\newblock \emph{The Annals of Statistics}, 48\penalty0 (4):\penalty0 1875--1897, 2020.

\bibitem[Scornet et~al.(2015)Scornet, Biau, and Vert]{scornet2015consistency}
Erwan Scornet, G{\'e}rard Biau, and Jean-Philippe Vert.
\newblock Consistency of random forests.
\newblock \emph{The Annals of Statistics}, 43\penalty0 (4):\penalty0 1716--1741, 2015.

\bibitem[Sen(2018)]{sen2018gentle}
Bodhisattva Sen.
\newblock A gentle introduction to empirical process theory and applications.
\newblock \emph{Lecture Notes, Columbia University}, 11:\penalty0 28--29, 2018.

\bibitem[Shalev-Shwartz and Ben-David(2014)]{shalev2014understanding}
S.~Shalev-Shwartz and S.~Ben-David.
\newblock \emph{Understanding Machine Learning: From Theory to Algorithms}.
\newblock Understanding Machine Learning: From Theory to Algorithms. Cambridge University Press, 2014.
\newblock ISBN 9781107057135.

\bibitem[Stone(1986)]{stone1986dimensionality}
Charles~J Stone.
\newblock The dimensionality reduction principle for generalized additive models.
\newblock \emph{The Annals of Statistics}, pages 590--606, 1986.

\bibitem[Stone(1994)]{stone1994use}
Charles~J Stone.
\newblock The use of polynomial splines and their tensor products in multivariate function estimation.
\newblock \emph{The annals of statistics}, 22\penalty0 (1):\penalty0 118--171, 1994.

\bibitem[Zhang and Wang(2009)]{Zhang2009}
Heping Zhang and Minghui Wang.
\newblock Search for the smallest random forest.
\newblock \emph{Statistics and its interface}, 2 3:\penalty0 381, 2009.

\bibitem[Zhang and Oles(2001)]{zhang2001text}
Tong Zhang and Frank~J Oles.
\newblock Text categorization based on regularized linear classification methods.
\newblock \emph{Information retrieval}, 4:\penalty0 5--31, 2001.

\end{thebibliography}

\newpage

\appendix
\section{Proofs}
\label{app:theorem}








This section contains proofs of theoretical results in the paper. Several useful preliminaries and notations are introduced first. Meanwhile, the constant $c$ in this section is always a positive number and will change from line to line in order to simplify notations.

\setcounter{equation}{0}
\setcounter{theorem}{0}
\renewcommand\theequation{A.\arabic{equation}} 
\renewcommand\thetheorem{A\arabic{theorem}} 
\noindent
\begin{definition}[\cite{blumer1989learnability}]
	Let $\mathcal{F}$ be a Boolean function class in which each $f:\mathcal{Z}\to\{0,1\}$ is binary-valued. The growth function of $\mathcal{F}$ is defined by
	$$
	\Pi_\mathcal{F}(m)=\max_{z_1,\ldots,z_m\in \mathcal{Z}}|\{(f(z_1),\ldots,f(z_m)):f\in\mathcal{F}\}|
	$$
	for each positive integer $m\in\mathbb{Z}_+$.
\end{definition}

\begin{definition}[\cite{gyorfi2002distribution}] \label{coveringnumber}
	Let $z_1,\ldots, z_n \in\mathbb{R}^p$ and $z_1^n=\{z_1,\ldots, z_n\}$. Let $\mathcal{H}$ be a class of functions $h:\mathbb{R}^p\to\mathbb{R}$. An $L_q$ $\varepsilon$-cover of $\mathcal{H}$ on $z_1^n$ is a finite set of functions $h_1,\ldots, h_N: \mathbb{R}^p
	\to \mathbb{R}$ satisfying
	$$
	\min_{1\leq j\leq N}{\left( \frac{1}{n}\sum_{i=1}^n{|h(z_i)-h_j(z_i)|^q}\right)^\frac{1}{q}}<\varepsilon,\ \ \forall h\in \mathcal{H}.
	$$
	Then, the $L_q$ $\varepsilon$-cover number of $\mathcal{H}$ on $z_1^n$, denoted by $\mathcal{N}_q(\varepsilon,\mathcal{H},z_1^n)$, is the minimal size of an $L_q$ $\varepsilon$-cover of $\mathcal{H}$ on $z_1^n$. If there exist no finite $L_q$ $\varepsilon$-cover of $\mathcal{H}$, then the above cover number is defined as $\mathcal{N}_q(\varepsilon,\mathcal{H},z_1^n)=\infty$.
\end{definition}

To bound the generalization error, VC class will be introduced in our proof. To make this supplementary material self-explanatory, we first introduce some basic concepts and facts about the VC dimension; see \cite{shalev2014understanding} for more details.

\begin{definition}[\cite{kosorok2008introduction}]
The subgraph of a real function $f:\mathcal{X}\to\mathbb{R}$ is a subset of $\mathcal{X}\times\mathbb{R}$ defined by
$$
C_f=\{ (x,y)\in\mathcal{X}\times\mathbb{R}:f(x)>y\},
$$
where $\mathcal{X}$ is an abstract set.
\end{definition}

\begin{definition}[\cite{kosorok2008introduction}]
Let $\mathcal{C}$ be a collection of subsets of the set  $\mathcal{X}$ and  $\{x_1,\ldots,x_m\}\subset \mathcal{X}$ be an arbitrary set of $m$ points. Define that $\mathcal{C}$ picks out a certain subset $A$ of $\{x_1,\ldots,x_m\}$ if $A$ can be expressed as $C\cap \{x_1,\ldots,x_m\}$ for some $C\in \mathcal{C}$. The collection $\mathcal{C}$ is said to shatter $\{x_1,\ldots,x_m\}$ if each of $2^m$ subsets can be picked
out.
\end{definition}

\begin{definition}[\cite{kosorok2008introduction}]\label{VC}
The VC dimension of the real function class $\mathcal{F}$, where each $f\in\mathcal{F}$ is defined on $\mathcal{X}$, is the largest integer $VC(\mathcal{C})$ such that a set  of points in $\mathcal{X}\times\mathbb{R}$ with size $VC(\mathcal{C})$ is shattered by $\{\mathcal{C}_f, f\in \mathcal{F}\}$. In this paper, we use $VC(\mathcal{F})$ to denote the VC dimension of $\mathcal{F}$.
\end{definition}

\textit{Proof of Theorem \ref{Th1}.} By Assumption \ref{assump1}, the convexity of risk function implies 
$$
\E(R(\hat{h}_n))\le \frac{1}{B} \sum_{b=1}^B \E(R(\hat{h}_{b,n})).
$$
Therefore, we only need to consider the excess risk of a single tree in the following analysis.

In fact, our proof is based on the following decomposition:
\begin{align}
     \E R(\hat{h}_{1,n})-R(h)= &\E (R(\hat{h}_{1,n})- \hat{R}(\hat{h}_{1,n}))+ \E(\hat{R}(\hat{h}_n)-\hat{R}(h))\nonumber\\
      &+\E (\hat{R}(h)-R(h))\nonumber\\
      &:= I+II+III, \nonumber
\end{align}
where I relates to the variance term of Mondrian tree, and II is the approximation error of Mondrian tree to $h\in \mathcal{H}^{p,\beta}([0,1]^d,C)$ and III measures the error when the empirical loss $\hat{R}(h)$ is used to approximate the theoretical one.

\textit{Analysis of Part I}.
Define two classes first.
$$
\mathcal{T}(t):= \{ \text{A Mondrian tree with $t$ leaves by partitioning $[0,1]^d$}\}
$$
$$
 \mathcal{G}(t):= \Big\{ \sum_{j=1}^t{\I(x\in \C_{j})}\cdot c_j: c_j\in\R,\ \C_j\ 's\ \text{are leaves of a tree in $\mathcal{T}(t)$} \Big\}.
$$
Thus, the truncated function class of $ \mathcal{G}(t)$ is given by
$$
 \mathcal{G}(t, z):=\{\tilde{g}(x)=T_{z}g(x) :  g\in\mathcal{G}(t)\},
$$
where the threshold $z>0$. Then, the part $I$ can be bounded as follows.
\begin{align}
    |I|&\le \E_{\pi_{\lambda_n}}\left( \E_{\D_n}\left| \frac{1}{n}\sum_{i=1}^n{\ell(\hat{h}_{1,n}(X_i),Y_i)- \E(\ell(\hat{h}_{1,n}(X),Y)|\D_n)}\right| \Big|\pi_{\lambda_n}\right)   \nonumber\\
    &\le \E_{\pi_{\lambda_n}}\left( \E_{\D_n}\left| \frac{1}{n}\sum_{i=1}^n{\ell(\hat{h}_{1,n}(X_i),Y_i)- \E(\ell(\hat{h}_{1,n}(X),Y)|\D_n)}\right| \Big|\pi_{\lambda_n}\right)   \nonumber\\
    &\le \E_{\pi_{\lambda_n}}\left( \E_{\D_n}\sup_{g\in \mathcal{G}(K(\lambda_n), \beta_n)}\left| \frac{1}{n}\sum_{i=1}^n{\ell(g(X_i),Y_i)- \E(\ell(g(X),Y))}\right| \Big|\pi_{\lambda_n}\right)\nonumber\\ 
    &= \E_{\pi_{\lambda_n}}\left( \E_{\D_n}\sup_{g\in \mathcal{G}(K(\lambda_n), \beta_n)}\left| \frac{1}{n}\sum_{i=1}^n{\ell(g(X_i),Y_i)- \E(\ell(g(X),Y))}\right| \cap \mathbb{I}(A_n)\Big|\pi_{\lambda_n}\right)\nonumber\\ 
    &+ \E_{\pi_{\lambda_n}}\left( \E_{\D_n}\sup_{g\in \mathcal{G}(K(\lambda_n), \beta_n)}\left| \frac{1}{n}\sum_{i=1}^n{\ell(g(X_i),Y_i)- \E(\ell(g(X),Y))}\right| \cap \mathbb{I}(A_n^c)\Big|\pi_{\lambda_n}\right)\nonumber\\
    &:= I_1+I_{2} \label{asdhbjhbjd},
\end{align}
where $A_n:= \{\max_{1\le i\le n} |Y_i|\le  \ln{n}\}$. Next, we need to find the upper bound of $I_1,I_2$ respectively. 

Let us consider $I_1$ first. Make the decomposition of $I_1$ as below.
\begin{align*}
    I_1&\le \E_{\pi_{\lambda_n}}\left( \E_{\D_n}\sup_{g\in \mathcal{G}(K(\lambda_n), \beta_n)}\left| \frac{1}{n}\sum_{i=1}^n{\ell(g(X_i),T_{\ln{n}}Y_i)- \E(\ell(g(X),Y))}\right| \cap \mathbb{I}(A_n)\Big|\pi_{\lambda_n}\right) \\
    &\le \E_{\pi_{\lambda_n}}\left( \E_{\D_n}\sup_{g\in \mathcal{G}(K(\lambda_n), \beta_n)}\left| \frac{1}{n}\sum_{i=1}^n{\ell(g(X_i),T_{\ln{n}}Y_i)- \E(\ell(g(X),Y))}\right| \Big|\pi_{\lambda_n}\right) \\
    &\le \E_{\pi_{\lambda_n}}\left( \E_{\D_n}\sup_{g\in \mathcal{G}(K(\lambda_n), \beta_n)}\left| \frac{1}{n}\sum_{i=1}^n{\ell(g(X_i),T_{\ln{n}}Y_i)- \E(\ell(g(X),T_{\ln{n}}Y))}\right| \Big|\pi_{\lambda_n}\right) \\
    &+  \E_{\pi_{\lambda_n}}\left( \sup_{g\in \mathcal{G}(K(\lambda_n), \beta_n)}\left| \E(\ell(g(X),T_{\ln{n}}Y))- \E(\ell(g(X),Y))\right| \Big|\pi_{\lambda_n}\right)\\
    &:= I_{1,1}+I_{1,2}.
\end{align*}

The part $I_{1,1}$ can be bounded by considering the covering number of the function class 
$$
\mathcal{L}_n:=\{\ell(g(\cdot),T_{\ln n}(\cdot)): g\in \mathcal{G}(K(\lambda_n), \beta_n)\},$$
where $K(\lambda_n)$ denotes the number of regions in the partition that is constructed by the truncated Mondrian process $\pi_{\lambda_n}$ with stopping time $\lambda_n$. Therefore, $K(\lambda_n)$ is a deterministic number once $\pi_{\lambda_n}$ is given.
For any $\varepsilon>0$, recall the definition of the covering number of $\mathcal{G}( K(\lambda_n))$, namely $\mathcal{N}_1(\varepsilon,\mathcal{G}( K(\lambda_n), \beta_n), z_1^n)$   shown in Definition \ref{coveringnumber}. Now, we suppose 
$$\{\eta_1(x),\eta_2(x),\ldots,\eta_J(x)\}$$
is a $\varepsilon/(M_1(\beta_n,\ln n))$-cover of class $\mathcal{G}(K(\lambda_n), \beta_n)$ in $L^1(z_1^n)$ space, where $L^1(z_1^n):=\{f(x): \|f\|_{z_1^n}:= \frac{1}{n}\sum_{i=1}^n|f(z_i)|<\infty\}$ is equipped with norm $\|\cdot\|_{z_1^n}$ and $J\ge 1$. Without loss of generality, we can further assume $|\eta_j(x)|\le \beta_n$ since $\mathcal{G}(K(\lambda_n), \beta_n)$ is upper bounded by $\ln n$. Otherwise, we consider the truncation of $\eta_j(x)$: $T_{\ln n}{\eta_j(x)}$.  According to Assumption
\ref{assump2}, we know for any $g\in \mathcal{G}(K(\lambda_n), \beta_n)$ and $\eta_j(x)$,
$$
|\ell(g(x),T_{\ln Y}y)-\ell(\eta_j(x),T_{\ln Y}y)|\le M_1(\beta_n,\ln n)|g(x)-\eta_j(x)|, \ \ 
$$
where $ x\in [0,1]^d, y\in\R.$ The  above inequality implies that 
$$
\frac{1}{n}\sum_{i=1}^n |\ell(g(z_i),T_{\ln Y}w_i)-\ell(\eta_j(z_i),T_{\ln Y}w_i)|\le M_1(\beta_n,\ln n)\cdot \frac{1}{n}\sum_{i=1}^n |g(z_i)-\eta_j(z_i)|
$$
for any $z_1^n:=(z_1,z_2,\ldots,z_n)\in \R^d\times\cdots\times\R^d$ and $(w_1,\ldots,w_n)\in\R\times\cdots\times\R$. Therefore, we know $\ell(\eta_1(x),T_{\ln n}y),\ldots,\ell(\eta_J(x),T_{\ln n}y)$ is a $\varepsilon$-cover of class $\mathcal{G}(K(\lambda_n), \beta_n)$ in $L^1(v_1^n)$ space, where $v_1^n:=((z_1^T,w_1)^T,\ldots,(z_n^T,w_n)^T)$. In other words, we have
\begin{equation}\label{bgfcshjdbfcb}
    \mathcal{N}_1(\varepsilon, \mathcal{L}_n, v_1^n)\le \mathcal{N}_1\left(\frac{\varepsilon}{M_1(\beta_n, \ln n)},\mathcal{G}( K(\lambda_n), \beta_n), z_1^n\right).
\end{equation}
Note that $\mathcal{G}( K(\lambda_n), \beta_n)$ is a VC class since we have shown $\mathcal{G}( K(\lambda_n))$ is a VC class in  \eqref{4.7}. Furthermore, we know the function in $\mathcal{G}( K(\lambda_n), \beta_n)$ is upper bounded by $\beta_n$. Therefore, we can bound the RHS of \eqref{bgfcshjdbfcb} by using Theorem 7.12 in \cite{sen2018gentle}
\begin{align}
 &\mathcal{N}_1\left(\frac{\varepsilon}{M_1(\beta_n,\ln n)},\mathcal{G}( K(\lambda_n), \beta_n), z_1^n\right)\nonumber\\
 &\le c\cdot VC(\mathcal{G}( K(\lambda_n), \beta_n)) (4e)^{VC(\mathcal{G}( K(\lambda_n), \beta_n))}\left(\frac{\beta_n}{\varepsilon}\right)^{VC(\mathcal{G}( K(\lambda_n), \beta_n))}, \label{hjw1sdf}
 \end{align}
 where the constant $c>0$ is universal. On the other hand, it is not difficult to show 
 \begin{equation}\label{adgsjashHG}
   VC(\mathcal{G}( K(\lambda_n), \beta_n))\le VC(\mathcal{G}( K(\lambda_n))).
 \end{equation}
  Thus, the combination of \eqref{adgsjashHG}, \eqref{hjw1sdf} and \eqref{bgfcshjdbfcb} implies 
 \begin{equation}\label{789nsvbfc}
       \mathcal{N}_1(\varepsilon, \mathcal{L}_n, v_1^n)\le c\cdot VC(\mathcal{G}( K(\lambda_n))) (4e)^{VC(\mathcal{G}( K(\lambda_n)))}\left(\frac{\beta_n}{\varepsilon}\right)^{VC(\mathcal{G}( K(\lambda_n)))}
 \end{equation}
for each $v_1^n$. Note that the class $\mathcal{L}_n$ has an envelop function $M_2(\beta_n, y)$ satisfying $\nu(n):= \sqrt{\E(M_2^2(\beta_n, Y))}<\infty$ by Assumption \ref{assump3}. Construct a series of independent Rademacher variables $\{b_i\}_{i=1}^n$ sharing with the same distribution $\P(b_i=\pm 1)=0.5, i=1,\ldots,n$. Then, the  symmetrization technique (see Lemma 3.12 in \cite{sen2018gentle}) and the Dudley entropy integral (see (41) in \cite{sen2018gentle}) and \eqref{789nsvbfc} imply
\begin{align}
   I_{1,1} &\le  \E_{\pi_{\lambda_n}}\left( \E_{\D_n}\sup_{g\in \mathcal{G}(K(\lambda_n), \beta_n)}\left| \frac{1}{n}\sum_{i=1}^n{\ell(g(X_i),T_{\ln{n}}Y_i)} b_i\right| \Big|\pi_{\lambda_n}\right)\  \nonumber\\
   & \text{(symmetrization technique)} \nonumber \\
   &\le \E_{\pi_{\lambda_n}}\left( \frac{24}{\sqrt{n}} \int_{0}^{\nu(n)} \sqrt{\ln \mathcal{N}_1(\varepsilon, \mathcal{L}_n, v_1^n)} d\varepsilon \Big|\pi_{\lambda_n}\right)  \ \  \qquad \qquad \nonumber\\
   & \text{(Dudley's entropy integral)} \nonumber\\
   &\le \frac{c}{\sqrt{n}}\cdot \E_{\pi_{\lambda_n}}\left(  \int_{0}^{\nu(n)} \sqrt{\ln (1+c\cdot(\beta_n/\varepsilon)^{2VC(\mathcal{G}( K(\lambda_n) )})}d\varepsilon \Big|\pi_{\lambda_n}\right)    \nonumber\\
   & (\text{E.q. \eqref{789nsvbfc}}) \nonumber\\
   &\le \beta_n \cdot \frac{c}{\sqrt{n}} \cdot \E_{\pi_{\lambda_n}}\left( \int_{0}^{\nu(n)/\beta_n} \sqrt{\ln (1+c(1/\varepsilon)^{2VC( \mathcal{G}(K(\lambda_n))}) } d\varepsilon\Big|\pi_{\lambda_n}\right), \label{dghbsajfhbHJKK} 
\end{align}
where we use the fact that  $\pi_\lambda$ is independent to the data set $\D_n$ and $c>0$ is universal. Without loss of generality, we can assume $\nu(n)/\beta_n<1$; otherwise just set $\beta_n'=\max\{ \nu(n),\beta_n\}$ as the new upper bound of the function class $\mathcal{G}( K(\lambda_n), \beta_n)$. Therefore, \eqref{dghbsajfhbHJKK} also implies
\begin{align}
   I_{1,1}  & \le  \max\{\beta_n,\nu(n)\} \cdot \frac{c}{\sqrt{n}} \cdot \E_{\pi_{\lambda_n}}\left( \int_{0}^{1} \sqrt{\ln (1+c(1/\varepsilon)^{2VC( \mathcal{G}(K(\lambda_n))}) } d\varepsilon\Big|\pi_{\lambda_n}\right)\nonumber\\
  &\le \max\{\beta_n,\nu(n)\} \cdot \frac{c}{\sqrt{n}}\cdot \E_{\pi_{\lambda_n}}\left(\sqrt{\frac{VC(\mathcal{G}(K(\lambda_n))}{n}}\right)\cdot \int_{0}^{1}\sqrt{\ln(1/\varepsilon)}d\varepsilon \nonumber\\
   &\le c\cdot\max\{\beta_n,\nu(n)\}  \cdot \E_{\pi_{\lambda_n}}\left(\sqrt{\frac{VC(\mathcal{G}(K(\lambda_n))}{n}}\right). \label{465sad}
\end{align}

 The next task is to find the VC 
 dimension of class $\mathcal{G}(t)$ for each positive integer $t\in\mathbb{Z}_+$, which is summarized  below.

 \begin{lemma}\label{pro_4}
     For each integer $t\in\mathbb{Z}_+$, $VC(\mathcal{G}(t))\le c(d)\cdot t\ln(t)$.
 \end{lemma}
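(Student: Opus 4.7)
The plan is to bound the growth function of the subgraph class of $\mathcal{G}(t)$ by a polynomial in $m$ of degree $O(t)$, then invoke the converse direction of Sauer's lemma. First, for any $g \in \mathcal{G}(t)$, the subgraph satisfies $C_g = \bigcup_{j=1}^t \C_j \times (-\infty, c_j)$, where $\{\C_j\}_{j=1}^t$ is an axis-aligned binary tree partition of $[0,1]^d$ with $t$ leaves. Such a partition is parametrized by (i) a rooted full binary tree topology with $t$ leaves, of which there are the Catalan number $C_{t-1} \le 4^t$, together with (ii) a split dimension in $\{1,\ldots,d\}$ and a real threshold in $[0,1]$ at each of the $t-1$ internal nodes, plus a real value $c_j$ at each of the $t$ leaves.

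Next, I fix an arbitrary collection of $m$ points $\{(z_i,y_i)\}_{i=1}^m \subset [0,1]^d \times \R$ and count the distinct indicator vectors $(\I((z_i,y_i)\in C_g))_{i=1}^m$ as $g$ ranges over $\mathcal{G}(t)$. The split threshold at an internal node affects these indicators only through which $z_i$'s lie on each side, giving at most $m+1$ effective thresholds per dimension and hence at most $d(m+1)$ effective choices per internal split. Similarly, each leaf value $c_j$ affects only the comparisons $y_i < c_j$ for those $z_i$ assigned to $\C_j$, yielding at most $m+1$ effective values per leaf. Combining yields
$$
\Pi_{\mathcal{G}(t)}(m) \le 4^t \cdot \bigl(d(m+1)\bigr)^{t-1} \cdot (m+1)^t \le \bigl(c(d)\,(m+1)\bigr)^{3t}
$$
for some constant $c(d)>0$.

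Finally, I invoke Sauer's lemma in the converse direction: if $V := VC(\mathcal{G}(t)) \ge 1$, then $V$ points can be shattered, so $2^V \le \Pi_{\mathcal{G}(t)}(V) \le (c(d)(V+1))^{3t}$. Taking logarithms and solving this implicit inequality gives $V \le c'(d)\cdot t\ln t$ for a suitable constant $c'(d)>0$, which is the claimed bound.

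The main obstacle is the counting step: one must carefully argue that although the tree splits are applied sequentially and each cell's shape depends on its ancestors, the subgraph pattern on the fixed $m$ points is fully determined by the finitely many effective choices of topology, split dimensions, effective thresholds, and effective leaf values, so that the $t-1$ splits and $t$ leaf values may be enumerated independently. Because we only need an upper bound on $\Pi_{\mathcal{G}(t)}(m)$, overcounting is harmless, and the essential difficulty is bookkeeping rather than technical estimation.
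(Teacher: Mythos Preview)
Your proposal is correct and reaches the same conclusion as the paper, but the route is genuinely different. The paper bounds the growth function by an \emph{inductive} argument on $t$: it defines $K(t)$ as the maximal number of distinct partitions of $m$ points induced by trees with $t$ leaves, and shows $K(t)\le K(t-1)\cdot c(d)(3m)^{d+1}$ by applying the Sauer--Shelah lemma to the hyperplane class at the newly split leaf (in fact the paper treats general oblique splits $\theta^\top x=s$, which is more than needed here). Combining $K(t)\le[c(d)(3m)^{d+1}]^t$ with the $(m+1)^t$ leaf-value factor gives $\Pi(m)\le(c(d)m)^{t(d+2)}$. Your argument is more direct and more elementary: you parametrize the tree by topology ($\le 4^t$ Catalan choices), axis-aligned split coordinate and threshold at each of the $t-1$ internal nodes ($\le d(m+1)$ effective choices each, since a threshold's effect on any subset of the $m$ points is determined by which of the $m+1$ gaps it falls into), and leaf value ($\le m+1$ effective choices each). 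This yields $\Pi(m)\le(c(d)(m+1))^{O(t)}$ with an exponent independent of $d$, slightly sharper than the paper's $t(d+2)$. Both then finish identically by solving $2^V\le(c(d)V)^{O(t)}$. What the paper's approach buys is generality to oblique-split trees; what yours buys is simplicity and a cleaner constant in the growth-function exponent for the axis-aligned case that is actually needed.
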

 \begin{proof}
  Recall two defined classes:
$$
\mathcal{T}(t):= \{ \text{A Mondrian tree with $t$ leaves by partitioning $[0,1]^d$}\}
$$
$$
 \mathcal{G}(t):= \Big\{ \sum_{j=1}^t{\I(x\in \C_{j})}\cdot c_j: c_j\in\R,\ \C_j\ 's\ \text{are leaves of a tree in $\mathcal{T}(t)$} \Big\}.
$$
For any Mondrian tree $g_{t}\in \mathcal{G}(t)$ with the form 
    \begin{equation}\label{ODT tree2}
         g_{t}(x)=\sum_{j=1}^{t}\I(x\in\C_j)\cdot c_j,
    \end{equation}
we also use notation $\{\C_1,\ldots,\C_{t};c_1,\ldots,c_{t}\}$ to denote tree \eqref{ODT tree2} later.

    In order to bound $VC( \mathcal{G}(t))$, we need to consider the related Boolean class:
    	$$
	\mathcal{F}_{t}= \{sgn(f(x,y)): f(x,y)=g(x)-y, g\in \mathcal{G}(t)\},
	$$
	where $sgn(v)=1$ if $v\ge 0$ and $sgn(v)=-1$ otherwise. Recall the VC dimension of $\mathcal{F}_{t}$, denoted by $VC( \mathcal{G}(t))$, is the largest integer $m\in\mathbb{Z}_+$ satisfying $2^m\leq\Pi_{ \mathcal{G}(t)}(m)$; see Definition \ref{VC}. Next we  focus  on bounding $\Pi_{ \mathcal{G}(t)}(m)$ for each positive integer $m\in\mathbb{Z}_+$. 

    Define the partition set generated by $\mathcal{G}(t)$:
    $$
\Pa_{t_n}:=\left\{ \{\C_1,\ldots,\C_{t}\}: \{\C_1,\ldots,\C_{t};c_1,\ldots,c_{t_n}\}\in\mathcal{G}(t)\right\}
    $$
    and the maximal partition number of $m$ points cut by $\Pa_{t_n}$:
    \begin{equation}\label{1}
        K(t):=\max_{x_1,\ldots,x_m\in [0,1]^p}Card\left(\left\{ \bigcup_{\Pa\in\Pa_{t}}\{\{x_1,\ldots,x_m\}\cap A: A\in\Pa\}\right\}\right).
    \end{equation}

    Since each Mondrian tree takes constant value on its leaf $\C_j$ $(j=1,\ldots,t)$, thus there is at most $\ell+1$ ways to pick out any fixed points $\{(x_1,y_1),\ldots, (x_\ell,y_\ell)\}\subseteq[0,1]^p\times \R$ that lie in a same leaf. In other words, 
    \begin{align}
        \max_{\substack{(x_j,y_j)\in[0,1]^p\times\R\\ j=1,\ldots,\ell}}Card\Big(\Big\{(sgn(f(x_1,y_1))&,\ldots,sgn(f(x_\ell,y_\ell)))\nonumber\\
        &: f(x,y)=c-y,x\in [0,1]^p, y\in\R, c\in\R\Big\}\Big)\le \ell +1.\label{2}
    \end{align}

    According to the tree structure   in \eqref{ODT tree2}, the growth function of $\mathcal{G}(t)$ satisfies
    \begin{equation}\label{3}
        \Pi_{\mathcal{F}_{t}}(m)\le K(t)\cdot (m+1)^{t}.
    \end{equation}

    Therefore, the left task is to bound $K(t)$ only. In fact, We can use induction method to prove 
    \begin{equation}\label{4}
        K(t)\le \left[ c(p)(3m)^{p+1}\right]^{t},
    \end{equation}
    where the constant $c(p)$ only depends on $p$.  The arguments are given below.

    When $t=1$, the partition generated by $\mathcal{G}({1})$ is $\{[0,1]^p\}$. Thus, $K(1)=1$ and \eqref{4} is satisfied obviously in this case.

    Suppose \eqref{4} is satisfied with $t-1$. Now we consider the case for $t$. Here, we need to use two facts. First, any $g_{t}$ must be grown from a $g_{t-1}$ by splitting one of its leaves; Second, any $g_{t-1}$ can grow to be another $g_{t}$ by partitioning one of its leaves. In conclusion, 
    \begin{equation}\label{-1}
          \mathcal{G}({t})=\{g_{t}:g_{t} \text{\ is\ obtained by splitting one of the leaves of } g_{t-1}\in \mathcal{G}({t-1})\}.
    \end{equation}
    Suppose $x_1^*,\ldots,x_m^*\in [0,1]^p$ maximizes $K(t_n)$. We divide $\mathcal{G}(t-1)$ into $K(t-1)$ groups:
    $$
\mathcal{G}^j(t-1):=\left\{ g_{t-1}\in  \mathcal{G}(t-1): g_{t-1}s\text{ share with a same partition for } \{x_1^*,\ldots,x_m^*\} \right\},
    $$
    where $j=1,2,\ldots, K(t-1)$. Meanwhile, we write the partition for $\{x_1^*,\ldots,x_m^*\}$ that is cut by $ \mathcal{G}^j(t-1)$ as
    \begin{equation}\label{5}
      \underbrace{  \{x^*_{u_1^j},x^*_{u_2^j},\ldots,x^*_{u^j_{m_{1,j}}}\}} _{m_{1,j}\ \text{times}},\ldots, \underbrace{\{x^*_{u^j_{m_{1,j}+\cdots+m_{t_n-2,j}+1}},\ldots,x^*_{u_m}\}}_{m_{t_n,j}\ \text{times}},
    \end{equation}
    where $(u_1^j,\ldots,u_m^j)$ is a permutation of $(1,\ldots,m)$ and the cardinality of above sets are written as $m_{1,j},m_{2,j},\ldots, m_{t_n,j}$. Note that both $(u_1^j,\ldots,u_m^j)$ and $(m_{1,j},m_{2,j},\ldots,m_{t_n,j})$ are determined once the index $j$ is fixed.

    If we have generated a Mondrian tree with $t$ leaves from its parent in $\mathcal{G}^j(t-1)$, the corresponding partition for $\{x_1^*,\ldots,x_m^*\}$ can be obtained by following two steps below
    \begin{enumerate}
        \item Partition one of sets in \eqref{5} by using a hyperplane $\theta^Tx=s$, where $\theta\in\R^p, s\in\R$.
        \item Keep other $t-2$ sets in \eqref{5} unchanged.
    \end{enumerate}
    Denote by $\tilde{K}_j$ the number of partition for $\{x_1^*,\ldots,x_m^*\}$ after following above process. Then, the Sauer–Shelah lemma (see for example Lemma 6.10 in \cite{shalev2014understanding} ) tells us 
    \begin{equation*}
        \tilde{K}_j\le  \sum_{\ell=1}^{t-1}c(p)\left( \frac{m_\ell e}{p+1}\right)^{p+1}\le \sum_{\ell=1}^{t-1}c(p)\left( 3m_\ell\right)^{p+1}
        \le c(p)\cdot (3m)^{p+1}.
    \end{equation*}
    Since  each $\mathcal{G}^j({t{-1}})$ can at most produce $c(p)(3m)^{p+1}$ new partitions of $\{x_1^*,\ldots,x_m^*\}$ based on \eqref{5}, \eqref{-1} and $\mathcal{G}({t-1})=\cup_{j=1}^{K(t-1)}\mathcal{G}^j({t{-1}})$ imply
    $$
   K(t)\le  K(t-1)\cdot c(p) (3m)^{p+1}.
    $$
    Therefore, \eqref{4} also holds for the case of $K(t)$.

According to \eqref{3} and \eqref{4}, we finally have
$$
  \Pi_{\mathcal{F}_{t}}(m)\le [c(p)\cdot (3m)^{p+1}]^{t}\cdot (m+1)^{t}\le (3c(p)m)^{t(p+2)}.
$$
Solving the inequality
	$$
	2^m\le (3c(p)m)^{t(p+2)}
	$$
	by using the basic inequality $\ln x\leq\gamma\cdot x-\ln \gamma-1 $ with  $ x,\gamma >0$ yields
	\begin{equation}
		VC(\mathcal{G}({t}))\le c(p)\cdot t\ln(t), \forall t\ge 2,
	\end{equation}
	where the constant $c(p)>0$ depends on $p$ only. This completes the proof. 
 \end{proof}

Therefore, we know from Lemma \ref{pro_4} and \eqref{465sad}  that
\begin{equation*}
I_{1,1}\le c\cdot \max\{\beta_n,\nu(n)\}\cdot \E_{\pi_{\lambda_n}}\left(\sqrt{\frac{K(\lambda_n)\ln K(\lambda_n)}{n}}\right).
\end{equation*}
By the basic inequality $\ln x\le x^\beta/\beta,\ \forall x\ge 1, \forall \beta>0$, from above inequality we have
\begin{equation}\label{sjnfdk}
   I_{1,1}\le  c\cdot \frac{\max\{\beta_n,\nu(n)\}}{\sqrt{n}}\cdot \E(K(\lambda_n)).
\end{equation}
Next, from Proposition 2 in \cite{mourtada2020minimax}, we know $\E(K(\lambda_n))=(1+\lambda_n)^d$. Finally, we have the following upper bound for $I_{1,1}$
\begin{equation}\label{dasj}
    I_{1,1}\le c\cdot \frac{\max\{\beta_n,\nu(n)\}}{\sqrt{n}}\cdot (1+\lambda_n)^d.
\end{equation}

Then, we bound  the second part $I_{1,2}$ of $I_1$ by following the arguments below.
\begin{align}
  I_{1,2} &\le  \E_{\pi_{\lambda_n}}\left( \sup_{g\in \mathcal{G}(K(\lambda_n), \beta_n)} \E(|\ell(g(X),T_{\ln{n}}Y)- \ell(g(X),Y)|) \Big|\pi_{\lambda_n}\right) \nonumber\\
  &= \E_{\pi_{\lambda_n}}\left( \sup_{g\in \mathcal{G}(K(\lambda_n), \beta_n)} \E(|\ell(g(X),\ln n)- \ell(g(X),Y)|\cdot \mathbb{I}(\{|Y|>\ln n\})) \Big|\pi_{\lambda_n}\right)\nonumber\\
  &\le \E_{\pi_{\lambda_n}}\left( \left(\sup_{g\in \mathcal{G}(K(\lambda_n), \beta_n)} \E(|\ell(g(X),\ln n)- \ell(g(X),Y)|^2) \right)^{\frac{1}{2}}\Big|\pi_{\lambda_n}\right)\P^{\frac{1}{2}}(|Y|>\ln n)\nonumber\\
  &\le \left( \sup_{x\in [-\beta_n,\beta_n]}{\ell^2 (x,\ln n)} + \E(M^2_2(\beta_n, Y))\right)^{\frac{1}{2}} \cdot c\exp(-\ln n \cdot w(\ln n)/c),
\end{align}
where in the third line we use Cauchy-Schwarz inequality and in last line we use Assumption \ref{assump3} and the tail probability of $Y$ given in Assumption \ref{assump_distribution}.

Finally, we end the \textit{Analysis of Part I} by bounding $I_{1,2}$. In fact, this bound can be processed as follows.
\begin{align}
  I_{1,2} &= \E_{\pi_{\lambda_n}}\left( \E_{\D_n}\sup_{g\in \mathcal{G}(K(\lambda_n), \beta_n)}\left| \frac{1}{n}\sum_{i=1}^n{\ell(g(X_i),Y_i)- \E(\ell(g(X),Y))}\right| \cap \mathbb{I}(A_n^c)\Big|\pi_{\lambda_n}\right)\nonumber \\
  &\le \E_{\pi_{\lambda_n}}\left( \E_{\D_n}\sup_{g\in \mathcal{G}(K(\lambda_n), \beta_n)} \left(  \frac{1}{n}\sum_{i=1}^n{\ell(g(X_i),Y_i)+ \E(\ell(g(X),Y))}\right) \cap \mathbb{I}(A_n^c)\Big|\pi_{\lambda_n}\right) \nonumber\\
  &\le\E_{\pi_{\lambda_n}}\left( \E_{\D_n} \left(  \frac{1}{n}\sum_{i=1}^n{M_2(\beta_n,Y_i)+ \E(M_2(\beta_n,Y))}\right) \cap \mathbb{I}(A_n^c)\Big|\pi_{\lambda_n}\right) \ \ (Assumption\ \ref{assump3})\nonumber\\
  &\le 2\cdot \sqrt{\E(M_2^2(\beta_n, Y))}\cdot \P(A_n^c) \label{gjhedgbj123}.
\end{align}
Thus, we only need to find the upper bound of $ \P(A_n^c)$. By Assumption \ref{assump_distribution}, we know
\begin{align}
   \P(A_n^c)&=1- \P\left(\max_{1\leq i\leq n}|Y_i|\leq \ln n\right)
    =1-\left[\P(|Y_1|\leq \ln n)\right]^n
    \leq 1-(1-c\cdot e^{-c\cdot \ln n\cdot w(\ln n)})^n \nonumber\\
    &\le 1-e^{n\cdot \ln(1-c\cdot e^{-c\cdot \ln n\cdot w(\ln n)})} \leq -n\cdot \ln(1-c\cdot e^{-c\cdot \ln{n}\cdot w(\ln n)})
		\leq  c\cdot n\cdot e^{-c\cdot \ln{n}\cdot w(\ln n)}\nonumber\\
        &\le \frac{c}{n}\label{subgaussguanjian} 
\end{align}
when  $n$ is sufficiently large. Therefore, \eqref{gjhedgbj123} and \eqref{subgaussguanjian} imply 
\begin{equation}\label{I2bound}
  I_2\le c\cdot \sqrt{\E(M_2^2(\beta_n, Y))}\cdot \frac{1}{n}.
\end{equation}

\textit{Analysis of Part II}. Recall 
$$
II:= \E\left( \frac{1}{n}\sum_{i=1}^n \ell(\hat{h}_{1,n}(X_i),Y_i)- \frac{1}{n}\sum_{i=1}^n \ell(h(X_i),Y_i)\right),
$$
which relates to the empirical approximation error of Mondrian forests. First, suppose the first truncated Mondrian process with stopping time $\lambda_n$  is given, denoted by $\pi_{1,\lambda_n}$. Under this restriction, the partition of $[0,1]^d$ is determined and not random, which is denoted by $\{\C_{1,\lambda,j}\}_{j=1}^{K_1(\lambda_n)}$. Let
\begin{align}
    \Delta_n &:= \E_{\D_n}\left( \frac{1}{n}\sum_{i=1}^n \ell(\hat{h}_{1,n}(X_i),Y_i)- \frac{1}{n}\sum_{i=1}^n \ell(h(X_i),Y_i)\right)\nonumber\\
    &=\E_{\D_n} \bigg( \frac{1}{n}\sum_{i=1}^n \ell(\hat{h}_{1,n}(X_i),Y_i)- \frac{1}{n}\sum_{i=1}^n \ell(h_{1,n}^*(X_i),Y_i)\nonumber \\
    &+\frac{1}{n}\sum_{i=1}^n \ell(h_{1,n}^*(X_i),Y_i)-\frac{1}{n}\sum_{i=1}^n \ell(h(X_i),Y_i)\bigg),\nonumber
\end{align}
where $h_n^*(x):= \sum_{j=1}^{K_1(\lambda_n)}\I(x\in \C_{1,\lambda_n,j})h(x_{1,\lambda_n,j})$ and $x_{1,\lambda_n,j}$  denotes the center of cell $\C_{1,\lambda_n,j}$. We  need to highlight  that $\Delta_n$ depends on $\pi_{1,\lambda_n}$. Since $\hat{h}_{1,n}$ is obtained by
$$
 \hat{c}_{b,\lambda,j}=\arg\min_{z\in [-\beta_n,\beta_n]}{ \sum_{i:X_i\in \C_{b,\lambda,j}}}{ \ell(z,Y_i)},
$$ 
we know
\begin{equation}\label{dhbabd}
    \frac{1}{n}\sum_{i=1}^n \ell(\hat{h}_{1,n}(X_i),Y_i)- \frac{1}{n}\sum_{i=1}^n \ell(h_{1,n}^*(X_i),Y_i)\le 0
\end{equation}
once $\beta_n>C$. At this point, we consider two cases about $\Delta_n$:

\noindent{Case I:} $\Delta_n\le 0$. This case is trivial because we already have $\Delta_n\le 0$.

\noindent{Case II:} $\Delta_n> 0$. In this case, \eqref{dhbabd} implies
\begin{align*}
    \Delta_n&\le  \E_{\D_n}\left| \frac{1}{n}\sum_{i=1}^n \ell(h_{1,n}^*(X_i),Y_i)-\frac{1}{n}\sum_{i=1}^n \ell(h(X_i),Y_i) \right|\\
    &\le  \frac{1}{n} \E_{\D_n}\left( \sum_{i=1}^n\left| \ell(h_{1,n}^*(X_i),Y_i)-\ell(h(X_i),Y_i) \right| \right)\\
    &\le \E_{X,Y} \left( |\ell(h_{1,n}^*(X),Y)-\ell(h(X),Y)|\right).
\end{align*}
Let $ D_\lambda(X)$ be the diameter of the cell that $X$ lies in. By Assumption \ref{assump2}, the above inequality further implies %
\begin{align}
     \Delta_n&\le   \E_{X,Y}(M_1(C,Y)|h_{1,n}^*(X)-h(X)|)\nonumber\\
             &\le   \E_{X,Y}(M_1(C,Y)\cdot C\cdot D_{\lambda_n}(X)^\beta)\nonumber\\
             &\le   C\cdot\E_{X,Y}(M_1(C,Y)\cdot D_{\lambda_n}(X)^\beta)\nonumber\\ 
             &\le    C\cdot\E_{X,Y}( M_1(C,Y)\cdot D_{\lambda_n}(X)^\beta \mathbb{I}(|Y|\le \ln n)+M_1(C,Y)\cdot D_{\lambda_n}(X)^\beta\mathbb{I}(|Y|> \ln n))\nonumber\\
             &\le   C\cdot\E_{X,Y}(\sup_{y\in [-\ln n,\ln n]}{M_1(C,y)} \cdot D_{\lambda_n}(X)^\beta +M_1(C,Y)\cdot d^{\frac{\beta}{2}}\cdot \mathbb{I}(|Y|> \ln n)) \nonumber\\
             &\le   C\cdot\left(\sup_{y\in [-\ln n,\ln n]}{M_1(C,y)}  \cdot  \E_X( D_{\lambda_n}(X)^\beta ) + d^{\frac{\beta}{2}}\cdot \sqrt{\E M^2_1(C,Y)}\cdot  \P^{\frac{1}{2}}(|Y|> \ln n)) \right), \label{hbdb}
\end{align}
where the second line holds because $h$ is a $(p,C)$-smooth function and we use $D_\lambda(X)\le \sqrt{d}\ a.s.$ to get the fifth line and Cauchy-Schwarz inequality in  the sixth line .

Therefore, Case I and \eqref{hbdb} in Case II imply that 
\begin{equation}\label{dad}
   \Delta_n\le C\cdot \left(\sup_{y\in [-\ln n,\ln n]}{M_1(C,y)}  \cdot  \E_X( D_{\lambda_n}(X)^\beta)  + d^{\frac{\beta}{2}}\cdot \sqrt{\E M^2_1(C,Y)}\cdot  \P^{\frac{1}{2}}(|Y|> \ln n)) \right)\ a.s..
\end{equation}
Taking expectation on both sides of \eqref{dad} w.r.t. $\lambda_n$ leads that
\begin{align}
    II &\le C\cdot \left(\sup_{y\in [-\ln n,\ln n]}{M_1(C,y)}  \cdot  \E_{X,\lambda_n}( D_{\lambda_n}(X)^\beta)  + d^{\frac{\beta}{2}}\cdot \sqrt{\E M^2_1(C,Y)}\cdot  \P^{\frac{1}{2}}(|Y|> \ln n)) \right)\nonumber\\
       &\le C\cdot \left( \sup_{y\in [-\ln n,\ln n]}{M_1(C,y)}\cdot  \E_X\E_{\lambda_n}( D_{\lambda_n}(x)^\beta|X=x) + d^{\frac{\beta}{2}} \sqrt{\E M^2_1(C,Y)}\cdot  \P^{\frac{1}{2}}(|Y|> \ln n)) \right)\nonumber\\
       &\le C\cdot \Big(\sup_{y\in [-\ln n,\ln n]}{M_1(C,y)}\cdot \E_X\left[ (\E_{\lambda_n}( D_{\lambda_n}(x)|X=x))^\beta\right] \nonumber\\
       &+ d^{\frac{\beta}{2}} \sqrt{\E M^2_1(C,Y)}\cdot  c\exp(-\ln n \cdot w(\ln n)/c) \Big),\label{asdfs}
\end{align}
where $\beta\in (0,1]$ and in the second line we use the fact that the function $v^\beta,v>0$ is concavity. For any fixed $x\in [0,1]^d$, we can bound $\E_{\lambda_n}( D_{\lambda_n}(x)|X=x)$ by using Corollary 1 in \cite{mourtada2020minimax}. In detail, we have
\begin{align}
    \E_{\lambda_n}( D_{\lambda_n}(x)|X=x) &\le \int_0^\infty{d\left(1+\frac{\lambda_n \delta}{\sqrt{d}}\right)\exp\left(-\frac{\lambda_n \delta}{\sqrt{d}}\right)d\delta}\nonumber\\
    &\le 2d^{\frac{3}{2}}\cdot \frac{1}{\lambda_n}.\label{dasdfs}
\end{align}
Thus, the combination of \eqref{asdfs} and \eqref{dasdfs} imply that 
\begin{equation}\label{dsadq12}
    II\le C\cdot \left(2d^{\frac{3}{2}\beta}\sup_{y\in [-\ln n,\ln n]}{M_1(C,y)} \cdot \lambda_n^{-\beta}  + d^{\frac{\beta}{2}}\cdot \sqrt{\E M^2_1(C,Y)}\cdot  \frac{c}{n} \right).
\end{equation}

\textit{Analysis of Part III}. This part can be bounded by  the central limit theorem.  Since the supremum norm of $h$ is bounded by $C$, we know from Assumption \ref{assump3} that 
$$
\ell(h(x),y)\le \sup_{v\in [-C,C]}\ell(v,y)\le M_2(C,y), \ \forall x\in [0,1]^d,y\in\R
$$
with $\E(M^2_2(C,Y))<\infty$. Thus, $M_2(C,y)$ is an envelop function of $\{h\}$. Note that a single function $h$ consists of a Glivenko-Cantelli class and has VC dimension 1. Thus, the application of equation (80) in \cite{sen2018gentle} implies
\begin{equation}\label{edasdfeasd}
    III:=\E (\hat{R}(h)-R(h))\le \frac{c}{\sqrt{n}}\cdot \sqrt{\E(M^2_2(C,Y))}
\end{equation}
for some universal $c>0$.

Finally, the combination of \eqref{dasj},  \eqref{I2bound}, \eqref{dsadq12} and \eqref{edasdfeasd} completes the proof.   \hfill\(\Box\)\\

\textit{Proof of Corollary \ref{Th2}.} This theorem can be obtained directly by using Theorem \ref{Th1} and Assumption \ref{assump5}.  \hfill\(\Box\) \\


\textit{Proof of Corollary \ref{Corro_consistency}.} The proof starts from the observation that our class $\mathcal{H}^{p,\beta}([0,1]^d,C)$ can be used to approximate any general function. Since $m(x)\in\{ f(x):\E |f|^\kappa(X)<\infty\}$, by density argument we know $m(x)$ can be approximated by a sequence of continuous functions in $L^\kappa$ sense. Thus, we just assume $m(x),x\in [0,1]^d$ is continuous. Define the logistic activation $\sigma_{log}(x)=e^x/(1+e^x),x\in\R$. For any $\varepsilon>0$, by Lemma  16.1 in \cite{gyorfi2002distribution} there is $h_\varepsilon(x)=\sum_{j=1}^J a_{\varepsilon,j}\sigma_{log}(\theta_{\varepsilon,j}^\top x+s_{\varepsilon,j}), x\in [0,1]^d$ with $a_{\varepsilon,j},s_{\varepsilon,j}\in\R$ and $\theta_{\varepsilon,j}\in\R^d$ such that
    \begin{equation}\label{dbkjfcjnbNBsVHBJHKMa}
        \E\left| m(X)-h_\varepsilon(X)\right|^\kappa\le \sup_{x\in [0,1]^d}|m(x)-h_\varepsilon(x)|^\kappa\le\frac{\varepsilon}{3}.
    \end{equation}
Since $h_\varepsilon(x)$ is a continuously differentiable, we know $h_\varepsilon(x)\in \mathcal{H}^{p,\beta}([0,1]^d,C(h_\varepsilon))$, where $C(h_\varepsilon)>0$ depends on $h_\varepsilon$ only. Now we fix such $h_\varepsilon(x)$ and make the decomposition as follows
\begin{align}
     \E R(\hat{h}_{1,n})-R(m)= &\E (R(\hat{h}_{1,n})- \hat{R}(\hat{h}_{1,n}))+ \E(\hat{R}(\hat{h}_n)-\hat{R}(m))\nonumber\\
      &+\E (\hat{R}(m)-R(m))\nonumber\\
      &:= I+II+III.\nonumber
\end{align}
Part I and III can be upper bounded by following similar analysis in Theorem \ref{Th1}. Therefore, under assumptions in our theorem, we know both of these two parts converges to zero as $n\to\infty$. Next, we consider Part II. Note that
\begin{align*}
    \E(\hat{R}(\hat{h}_n)-\hat{R}(m))&= \E(\hat{R}(\hat{h}_n)-\hat{R}(h_\varepsilon)) + \E(R(h_\varepsilon)- R(m))\\
    &\le \E(\hat{R}(\hat{h}_n)-\hat{R}(h_\varepsilon)) + \E|h_\varepsilon(X)- m(X)|^\kappa\\
     &\le \E(\hat{R}(\hat{h}_n)-\hat{R}(h_\varepsilon)) + c\cdot \frac{\varepsilon}{3},
\end{align*}
where in the second line we use Assumption \ref{assump5}. Finally, we only need to consider the behavior of term $\E(\hat{R}(\hat{h}_n)-\hat{R}(h_\varepsilon))$ as $n\to\infty$. This can be done by using the analysis of Part II in the proof for Theorem \ref{Th1}.  Taking $C=C(h_\varepsilon)$ in \eqref{dsadq12}, we  have
\begin{align*}
   \E(\hat{R}(\hat{h}_n)-\hat{R}(h_\varepsilon)) \le & C\cdot \Big(2d^{\frac{3}{2}}\sup_{y\in [-\ln n,\ln n]}{M_1(C,y)} \cdot \lambda_n^{-1}  \nonumber\\
   &+ d^{\frac{1}{2}} \sqrt{2\E M^2_1(C,Y)}\cdot c\exp(-\ln n \cdot w(\ln n)/c) \Big),
\end{align*}
which goes to zero as $n$ increases. In conclusion, we have proved that 
$$
\lim_{n\to\infty} \E(\hat{R}(\hat{h}_n)-R(m))=0.
$$
The above inequality and Assumption \ref{assump5} shows that $\hat{h}_n$ is $L^\kappa$ consistent for the general function $m(x),x\in [0,1]^d$.  \hfill\(\Box\) \\

\textit{Proof of Theorem \ref{thm3}.} Based on Assumption \ref{assump1}, we only need to consider the regret function for 
$\hat{h}^*_{1,n}$. For any $\lambda>0$, by the definition of $\hat{h}^*_{1,n}$ we know
\begin{align}
    \E (\hat{R}(\hat{h}_{1,n}^*))&\le \E (\hat{R}(\hat{h}_{1,n}^*)+ \alpha_{n}\cdot \lambda_{n,1}^* )\nonumber\\
    &\le \E (\hat{R}(\hat{h}_{1,n,\lambda})+ \alpha_{n}\cdot \lambda), \label{sfsd123}
\end{align}
where $\hat{h}_{1,n,\lambda}$ is the estimator based on the process $MP_1(\lambda, [0,1]^d)$. 

On the other hand, we have the decomposition below
\begin{align}
     \E R(\hat{h}_{1,n}^*)-R(h)&= \E (R(\hat{h}_{1,n}^*)- \hat{R}(\hat{h}_{1,n}^*))+ \E(\hat{R}(\hat{h}_{1,n}^*)-\hat{R}(h))\nonumber\\
      &+\E (\hat{R}(h)-R(h)):=I+II+III\label{decompo2}.
\end{align}

Firstly, we bound Part $I$. Recall  $A_n:= \{\max_{1\le i\le n} |Y_i|\le  \ln{n}\}$, which is defined below \eqref{asdhbjhbjd}. Make the decomposition of $I$ as follows.
\begin{align}
 I &= \E ( (R(\hat{h}_{1,n}^*)- \hat{R}(\hat{h}_{1,n}^*))\cap \mathbb{I}(A_n))+ \E ( (R(\hat{h}_{1,n}^*)- \hat{R}(\hat{h}_{1,n}^*))\cap \mathbb{I}(A_n^c)) \nonumber\\
   &:= I_{1,1}+I_{1,2} \label{shjgfgbaskjh22}
\end{align}
The key for bounding  $I_{1,1}$ is to find the upper bound of $\lambda_{n,1}^*$. By the definition of $\hat{h}_{1,n}^*$ and Assumption \ref{assump3}, we know if $A_n$ occurs
$$
\alpha_n\cdot \lambda_{n,1}^*\le Pen(0)\le \sup_{y\in [-\ln n,\ln n]}M_2(\beta_n,y).
$$
Therefore, when $A_n$ happens we have 
\begin{equation*}
    \lambda_{n,1}^*\le \frac{ \sup_{y\in [-\ln n,\ln n]}M_2(\beta_n,y)}{\alpha_n}.
\end{equation*}
Following arguments that we used to bound $I_{1,1}$ in the Proof of Theorem \ref{Th1}, we know
\begin{align}
    |I_{1,1}|&\le c\cdot \frac{\max\{\beta_n,\sqrt{\E(M_2^2(\beta_n, Y))}\}}{\sqrt{n}}\cdot (1+\lambda_{n,1}^*)^d\nonumber\\
    &\le c\cdot \frac{\max\{\beta_n,\sqrt{\E(M_2^2(\beta_n, Y))}\}}{\sqrt{n}}\cdot \left(1+\frac{ \sup_{y\in [-\ln n,\ln n]}M_2(\beta_n,y)}{\alpha_n}\right)^d \label{dasj2}
\end{align}
Next, the way for bounding $I_{1,2}$ in \eqref{shjgfgbaskjh22} is similar to that we used to bound $I_{1,2}$ in the proof of Theorem \ref{Th1}. Namely, we have
\begin{equation}\label{fbjshFBjhSBF}
  |I_{1,2}|\le \left( \sup_{x\in [-\beta_n,\beta_n]}{\ell^2 (x,\ln n)} + \E(M^2_2(\beta_n, Y))\right)^{\frac{1}{2}} \cdot c\exp(-\ln n \cdot w(\ln n)/c).
\end{equation}

Secondly, we use  \eqref{sfsd123} to bound Part $II$ in \eqref{decompo2}. By the definition of $\hat{h}_{1,n}^*$, for any $\lambda>0$ we have
$$
  II:= \E(\hat{R}(\hat{h}_{1,n}^*)-\hat{R}(h))\le \E (\hat{R}(\hat{h}_{1,n,\lambda})-\hat{R}(h)+ \alpha_{n}\cdot \lambda).
$$
Similar to the Proof of Theorem \ref{Th1}, the above inequality implies
\begin{equation}\label{dasxc}
     II \le C\cdot \left(2d^{\frac{3}{2}p}\sup_{y\in [-\ln n,\ln n]}{M_1(C,y)} \cdot\lambda_n^{-p}+d^{\frac{p}{2}}\sqrt{\E(M_1^2(C,Y))}\cdot \frac{c}{n} \right)+ \alpha_{n}\cdot \lambda.
\end{equation}
Since \eqref{dasxc} holds for all $\lambda>0$, taking $\lambda= \left(\frac{1}{\alpha_n}\right)^{1/(p+1)}$ inequality \eqref{dasxc} further implies 
\begin{align}\label{sfhjcsvbdjhbcj}
    II&\le (2C\sup_{y\in [-\ln n,\ln n]}{M_1(C,y)}d^{\frac{3}{2}p}+1)\cdot \left(\alpha_n\right)^{\frac{p}{p+1}}+r_n\nonumber\\
    &\le (2C\sup_{y\in [-\ln n,\ln n]}{M_1(C,y)}d^{\frac{3}{2}p}+1)\cdot \left(\alpha_n\right)^{\frac{p}{2}}+r_n,
\end{align}
where $r_n:=C\cdot d^{\frac{1}{2}p}\sqrt{\E(M_1^2(C,Y))}\cdot \frac{c}{n}$ is caused by the probability tail of $Y$.

Thirdly, we consider Part $III$. The argument for this part is same with that used to obtain \eqref{edasdfeasd}. Namely, we have
\begin{equation}\label{edasdfeasd2}
    III:=\E (\hat{R}(h)-R(h))\le \frac{c}{\sqrt{n}}\cdot \sqrt{\E(M^2_2(C,Y))},
\end{equation}
where $c>0$ is universal and does not depend on $C$.

Finally, the combination of \eqref{sfhjcsvbdjhbcj}, \eqref{edasdfeasd2}, \eqref{dasj2} and \eqref{fbjshFBjhSBF} finishes the proof.  \hfill\(\Box\) \\

\textit{Proof of Proposition \ref{secpro:huber}.} For any function $h:[0,1]^d\to [-\beta_n,\beta_n]$, by Assumption \ref{assump_distribution} we know the event 
$$
 F_n:=\bigcap_{i=1}^n\{ Y_i-h(X_i)\in [-C\ln n,C\ln n]\}
$$
happens with probability larger than $1-cn\exp(-C\ln n \cdot w(C\ln n)/c)$, where $C>0$ is a large number and $c>0$. Denote by $\hat{h}_{n,ols}$ the least square forest estimator in Section \ref{subsec:ols}. Now, we make the decomposition below.
$$
\begin{aligned}
  \E (\hat{h}_n(X)-m(X))^2&= \E[ (\hat{h}_n(X)-m(X))^2|F_n]\P(F_n)+\E[(\hat{h}_n(X)-m(X))^2|F_n^c]\P(F_n^c)\\
  \E (\hat{h}_{n,ols}(X)-m(X))^2&= \E[ (\hat{h}_{n,ols}(X)-m(X))^2|F_n]\P(F_n)+\E[(\hat{h}_{n,ols}(X)-m(X))^2|F_n^c]\P(F_n^c)\\
\end{aligned}
$$
When $F_n$ occurs, it can be seen $\hat{h}_{n,ols}=\hat{h}_{n}$ if $\delta_n=C\ln n$ for some large $C>0$. On the other hand, we have the upper bounds of two risk functions:
$$
      \E (\hat{h}_n(X)-m(X))^2\le (2\beta_n^2+2\E m^2(X)),
    R(\hat{h}_{n,ols})\le (2\beta_n^2+2\E m^2(X)).
$$
The combination of above inequalities leads that 
$$
   | \E (\hat{h}_n(X)-m(X))^2-\E (\hat{h}_{n,ols}(X)-m(X))^2|\le (c\ln^2 n+c)\cdot cn\exp(-C\ln n \cdot w(C\ln n)/c).
$$
By Corollary \ref{Corro_consistency}, we already know $\hat{h}_{n,ols}$ is $L^2$ consistent for any general $m(X)$. Therefore, $\hat{h}_n$ is also $L^2$ consistent.  \hfill\(\Box\) \\

\textit{Proof of Proposition \ref{pro:cla}.}
Let us show the existence of $m$ in \eqref{true}. In fact, \cite{lugosi2004bayes} tells us one of the minimizers is
$$
m_*(x):= \inf_{\alpha\in\R}\{\eta(x)\phi(-\alpha)+(1-\eta(x))\phi(\alpha)\}
$$
when  $\phi$ is a differentiable strictly convex, strictly increasing cost function satisfying $\phi(0)=1, \lim_{v\to-\infty}\phi(v)=0$. Let $\varepsilon>0$ be a given small number. Next, we need to show there is $h_*\in\mathcal{H}^{p,\beta}([0,1]^d,C_p)$ such that
\begin{equation}\label{iai}
  R(h_*)-R(m_*)\le \varepsilon.
\end{equation}
when the cost function is $\phi_5$ or $\phi_6$.

First, we consider $\phi_5$. Since $\sup_{v\in\R}|\phi'(v)|\le 1/\ln2$, we have
$$
 R(h)-R(m)\le \frac{1}{\ln 2} \E|h(X)-m_*(X)|.
$$
Note that $\E|m_*(X)|<\infty$. We can always find a infinite differentiable function $h_*\in\mathcal{H}^{p,\beta}([0,1]^d,C_p)$ s.t. $\E|h(X)-m(X)|<\ln2 \varepsilon$. This completes the proof for \eqref{iai}.

Second, we consider $\phi_6$. The argument for $\phi_5$ above does not work due to the dramatic increase of $e^v$. In this case, we need to define an infinite differentiable function
$$
w(x):=e^{\frac{1}{\|x\|_2^2-1}}\I(\|x\|_2<1), x\in\R.
$$
Based on this mollifier, we consider the weighted average function of $m$:
$$
m_\eta(x):= \int_{\R^d} m_*(x-z)\frac{1}{\eta^d}w\left(\frac{z}{\eta} \right)dz, x\in [0,1]^d,
$$
where we define $m(x)=0$ for any $x\notin [0,1]^d$ and $\eta>0$.  We know $m_\eta$ is an infinite differentiable function in $[0,1]^d$ and 
$$
\sup_{x\in [0,1]^d}{|m_\eta(x)|}\in [0,1].
$$
Importantly, some simple analysis implies
\begin{equation}\label{fhuk3jshf}
\lim_{\eta\to 0}{\E|m_\eta(X)-m_*(X)| }=0.
\end{equation}
In fact, we next show one of $m_\eta$ can be defined as $h_*$ satisfying \eqref{iai}. By the dominated convergence theorem, we have 
\begin{align*}
\E(e^{-Ym_*(X)})&=\E\left( \sum_{k=0}^{\infty}\frac{(-Y)^km_*^k(X)}{k!}\right)= \sum_{k=0}^{\infty}\E\left( \frac{(-Y)^km_*^k(X)}{k!}\right)\\
\E(e^{-Ym_\eta(X)})&=\E\left( \sum_{k=0}^{\infty}\frac{(-Y)^km_\eta^k(X)}{k!}\right)= \sum_{k=0}^{\infty}\E\left( \frac{(-Y)^km_\eta^k(X)}{k!}\right)
\end{align*}
Since functions $|m|,|m_\eta|$ are upper bounded by $1$, we can find a $N_{\eta,\varepsilon}\in\mathbb{Z}^+$ such that
\begin{align*}
  \left|R(m_*)-\sum_{k=0}^{N_{\eta,\varepsilon}} \E\left( \frac{(-Y)^km_*^k(X)}{k!}\right)\right| & \le \frac{\varepsilon}{3} \\
  \left|R(m_\eta)-\sum_{k=0}^{N_{\eta,\varepsilon}}\E\left( \frac{(-Y)^km_\eta^k(X)}{k!}\right)\right| & \le \frac{\varepsilon}{3} 
\end{align*}
For any $k\le N_{\eta,\varepsilon}$, we have
\begin{align*}
 |(-Y)^km_*^k(X)-(-Y)^km_\eta^k(X)|&\le |m_*(X)-m_\eta(X)||m_*^{k-1}(X)+m_*^{k-2}(X)m_\eta(X)+\cdots+m_\eta^{k-1}(X) | \\
 &\le k|m_*(X)-m_\eta(X)|.
\end{align*}
From \eqref{fhuk3jshf}, choose $m_{\eta^\varepsilon}$ such that
$$
\E|m_*(X)-m_{\eta^\varepsilon}(X)|\le \left( \sum_{k=1}^{N_{\eta,\varepsilon}} \frac{1}{(k-1)!} \right)^{-1} \frac{\varepsilon}{3}.
$$
Put above inequalities together. Then, we know 
$$
   R(m_{\eta^\varepsilon})-R(m_*)\le \varepsilon.
$$
 Finally,  $m_{\eta^\varepsilon}\in\mathcal{H}^{p,\beta}([0,1]^d,C_p)$ for some $C_p>0$ since it is infinite differentiable. Thus, $m_{\eta^\varepsilon}$ can be $h_*$ defined in \eqref{iai}.

On the other hand, by Remark \ref{remark3} we have
$$
  \varlimsup_{n\to\infty} \E(R(\hat{h}_n))\le R(h_*).
$$
Thus, there exist $N_2\in\mathbb{Z}^+$ such that for any $n\ge N_2$,
$$
 \E(R(\hat{h}_n))\le R(h_*)+\varepsilon.
$$
The proof is completed by combining above inequality and \eqref{iai} together.  \hfill\(\Box\)

\

\textit{Proof of Lemma \ref{Density_lemma1}.} Note that  $Pen(h):=\ln \int_{[0,1]^d}{\exp{(h(x))}dx}$. Define a real function as follows:
$$
 g(\alpha):= Pen((1-\alpha)\cdot h+\alpha\cdot h_n^*),\ 0\le\alpha \le 1.
$$
Thus, we have $g(0)=Pen(h)$ and $g(1)=Pen(h_n^*)$. Later, it will be convenient to use the function $g$ in the analysis of this penalty  function. Since both $h$ and $h_n^*$ are upper bounded, we know $g(\alpha)$ is differentiable and its derivative is
\begin{equation}\label{fhbdjsfbkSJD}
    \frac{d}{d \alpha} g(\alpha)= \frac{\int_{[0,1]^d}{(h_n^*(x)-h(x))\exp{(h(x)+\alpha\cdot(h_n^*(x)-h(x)})dx}}{\int_{[0,1]^d}{\exp{(h(x)+\alpha\cdot(h_n^*(x)-h(x))}dx}}.
\end{equation}
Define a continuous random vector $Z_\alpha$ with the density function
\begin{equation}\label{khfskdhnfkjnsaf}
       f_{Z_\alpha}(x):= \frac{\exp{(h(x)+\alpha\cdot(h_n^*(x)-h(x))}}{\int_{[0,1]^d}{\exp{(h(x)+\alpha\cdot(h_n^*(x)-h(x))}dx}}, \ x\in [0,1]^d.
\end{equation}
From \eqref{fhbdjsfbkSJD} and \eqref{khfskdhnfkjnsaf}, we know
\begin{equation}\label{bojfsoiljfd}
     \frac{d}{d \alpha} g(\alpha)=\E_{Z_\alpha}(h_n^*(Z_\alpha)-h(Z_\alpha)).
\end{equation}
On the other hand, the Lagrange mean theorem implies
\begin{align}
    |Pen(h)-Pen(h_n^*(x))|&=|g(0)-g(1)|\nonumber\\
    &=\left| \frac{d}{d \alpha} g(\alpha)|_{\alpha=\alpha^*} \right|\nonumber\\
    &=\E_{Z_{\alpha^*}}(|h_n^*(Z_{\alpha^*})-h(Z_{\alpha^*})|),\label{HBLJ}
\end{align}
where $\alpha^*\in [0,1]$. Thus, later we only need to consider the last term of \eqref{HBLJ}. Since $f_{Z_{\alpha^*}}(x)\le \exp{(2C)},\ \forall x\in[0,1]^d, \forall \alpha\in [0,1]$, we know from \eqref{HBLJ} that
\begin{equation}
     |Pen(h)-Pen(h_n^*(x))|\le \exp{(2C)}\cdot\E_U(|h_n^*(U)-h(U)|),
\end{equation}
where $U$ follows the uniform distribution in $[0,1]^d$ and is independent with $\pi_\lambda$. By further calculation, we have
\begin{align}
    \E_{\pi_\lambda}|Pen(h)-Pen(h_n^*(x))| &\le \exp (2C)\cdot \E_{\pi_\lambda}\E_{U}(|h_n^*(U)-h(U)|)\nonumber\\
    &\le \exp (2C)\cdot \E_{U} \E_{\pi_\lambda}(C\cdot D_{\lambda_n}(U)^\beta)\nonumber\\
       &\le \exp (2C)\cdot\E_U\left[ (\E_{\lambda_n}( D_{\lambda_n}(u)|U=u))^\beta\right].\nonumber
\end{align}
From \eqref{dasdfs}, we already know $\E_{\lambda_n}( D_{\lambda_n}(u)|U=u)\le 2d^{\frac{3}{2}}\cdot \frac{1}{\lambda_n}$. Thus, above inequality implies
$$
\E_{\pi_\lambda}|Pen(h)-Pen(h_n^*(x))| \le \exp (2C)\cdot 2^\beta d^{\frac{3}{2}\beta}\cdot \left(\frac{1}{\lambda_n}\right)^\beta.
$$
This completes the proof.  \hfill\(\Box\)\\

\textit{Proof of Lemma \ref{Density_lemma2}.} First, we calculate the term  
$\frac{d^2}{d\alpha^2} R(h_0+\alpha g)$, where $\int{g(x)dx}=0$. With some calculation, it is not  difficult to know
\begin{equation}\label{densitysjnfk}
   \frac{d^2}{d\alpha^2} R(h_0+\alpha g) = Var\left( h(X_\alpha)\right),
\end{equation}
where $X_\alpha$ is a continuous random vector in $[0,1]^d$. Furthermore, $X_\alpha$ has the density $f_{X_\alpha}(x)= \exp(g_\alpha(x))/\int {\exp(h_\alpha(x))dx}$ with $g_\alpha(x)=h_0(x)+\alpha g(x)$. Sicne $\|h_0\|_\infty\le c$ and $\|g\|_\infty\le \beta_n$, we can assume without loss generality that $\|g_\alpha\|_\infty\le\beta_n$. This results that 
\begin{equation}\label{js}
   \exp{(-2\beta_n)}\le f_{X_\alpha}(x)\le \exp{(2\beta_n)},\ \forall x\in [0,1]^d.
\end{equation}
Let $U$ follows uniform distribution in $[0,1]^d$.  Equation \eqref{js} implies
\begin{align}
        Var\left( g(X_\alpha)\right)&=\inf_{c>0}\E(g(X_\alpha)-c)^2\nonumber\\
        &\le \exp{(2\beta_n)}\cdot \inf_{c>0}\E(g(U)-c)^2\nonumber\\
        &= \exp{(2\beta_n)}\cdot Var(g(U))=\exp{(2\beta_n)}\cdot \E(g^2(U)).\label{1fsdhbjkf}
\end{align}
With the same argument, we also have
\begin{equation}\label{1fsdhbjkf2}
    Var\left( g(X_\alpha)\right)\ge \exp{(-2\beta_n)}\cdot \E(g^2(U)).
\end{equation}
The combination of \eqref{densitysjnfk}, \eqref{1fsdhbjkf} and \eqref{1fsdhbjkf2} shows that
\begin{equation}\label{VJHVBJHB21}
     c\cdot\exp{(-2\beta_n)}\cdot \E(g^2(X))\le \frac{d^2}{d\alpha^2} R(h_0+\alpha g)\le c^{-1}\cdot\exp{(2\beta_n)}\cdot \E(g^2(X))
\end{equation}
for some universal $c>0$ and any $\alpha\in [0,1]$. Finally, by Taylor expansion, we have
$$
 R(h)=R(h_0)+\frac{d}{d\alpha} R(h_0+\alpha (h-h_0))|_{\alpha=0}+\frac{d^2}{d\alpha^2} R(h_0+\alpha (h-h_0))|_{\alpha=\alpha^*}
$$
for some $\alpha^*\in [0,1]$. Without loss of generality, We  can assume  that $\|h-h_0\|_\infty\le\beta_n$.  Thus, the second derivative $\frac{d^2}{d\alpha^2} R(h_0+\alpha (h-h_0))|_{\alpha=\alpha^*}$ can be bounded by using \eqref{VJHVBJHB21} if we take $g=h-h_0$. Meanwhile, the first derivative satisfies $\frac{d}{d\alpha} R(h_0+\alpha (h-h_0))|_{\alpha=0}=0$  since $h_0$ achieves the minimal value of $R(\cdot)$. Based on above analysis, we have
$$
  c\cdot\frac{1}{\ln n}\cdot \E(h(X)-h_0(X))^2\le R(h)-R(h_0)\le c^{-1}\cdot\ln n \cdot \E(h(X)-h_0(X))^2
$$
for some universal $c>0$. This completes the proof.  \hfill\(\Box\)

\vskip 0.2in

\end{document}